\title{On an Index Theorem of Chang, Weinberger, and Yu}
\author{Thomas
  Schick}
\address{Thomas Schick\\  Mathematisches Institut\\
  Universit\"at G\"ottingen\\
  Germany}
\email{thomas.schick@math.uni-goettingen.de}
\urladdr{http:www.uni-math.gwdg.de/schick}
\author{Mehran Seyedhosseini}
\address{Mehran Seyedhosseini\\
  Institut für Mathematik\\
  Universität Potsdam\\
  Germany}
\email{mehran.seyed.hosseini@uni-potsdam.de}
\urladdr{https://www.math.uni-potsdam.de/professuren/geometrie/personen/mehran-seyedhosseini/}
\newcounter{commentcounter}
\newcommand{\showcomments}{yes}
\newsavebox{\commentbox}
\newenvironment{com}%
{\ifthenelse{\equal{\showcomments}{yes}}%
{\footnotemark
        \begin{lrbox}{\commentbox}
        \begin{minipage}[t]{1.25in}\raggedright\sffamily\tiny
        \footnotemark[\arabic{footnote}]}
{\begin{lrbox}{\commentbox}}}
{\ifthenelse{\equal{\showcomments}{yes}}
{\end{minipage}\end{lrbox}\marginpar{\usebox{\commentbox}}}
{\end{lrbox}}}
\newtheorem{theorem}[subsection]{Theorem}
\newtheorem{proposition}[subsection]{Proposition}
\newtheorem{lemma}[subsection]{Lemma}
\theoremstyle{definition}
\newtheorem{definition}[subsection]{Definition}
\theoremstyle{definition}
\newtheorem{remark}[subsection]{Remark}
\newtheorem{corollary}[subsection]{Corollary}
\DeclareMathOperator{\supp}{supp}
\DeclareMathOperator{\prop}{prop}
\DeclareMathOperator{\kernel}{ker}
\DeclareMathOperator{\ind}{ind}
\DeclareMathOperator{\red}{red}
\DeclareMathOperator{\maxi}{max}
\DeclareMathOperator{\Max}{m}
\DeclareMathOperator{\ad}{Ad}
\DeclareMathOperator{\ev}{ev}
\DeclareMathOperator{\Ind}{Ind}
\newcommand{\reals}{\mathbb{R}}
\newcommand{\integers}{\mathbb{Z}}
\newcommand{\norm}[1]{||#1||}
\newcommand{\normalsubgroup}{\lhd}
\newcommand{\iso}{\cong}
\begin{document}

\begin{abstract}
In this paper we prove a strengthening of a theorem of
Chang, Weinberger and Yu on obstructions to the existence of positive scalar
curvature metrics on compact 
manifolds with boundary. They construct a relative index for the Dirac operator, 
which lives in a relative $K$-theory group, measuring the difference between the
fundamental group of the boundary and of the full manifold.

Whenever the
  Riemannian metric has product structure and
  positive scalar curvature near the boundary, one can define an absolute index of the
  Dirac operator taking value in the K-theory of the $C^*$-algebra of
  fundamental group of the full manifold. This index depends on the metric
  near the boundary. We prove that (a slight variation of) the relative index of Chang,
  Weinberger and Yu is the image of this absolute index under the canonical
  map of K-theory groups.

  This has the immediate corollary that positive
  scalar curvature on the whole manifold implies vanishing of the relative
  index,  giving a conceptual and direct proof of the vanishing
  theorem of Chang, Weinberger, and Yu (rather: a slight variation). To take the fundamental groups of the
  manifold and its boundary into account requires working with maximal
$C^*$-completions of the involved $*$-algebras. A significant part of this
paper is devoted to foundational results regarding these completions. On the
other hand, we introduce and propose a more conceptual and more geometric
completion, which still has all the required functoriality.
\end{abstract}

\maketitle

\section{Introduction}
In \cite{CWY} Chang, Weinberger and Yu define a relative index of the Dirac
operator on a compact spin manifold $M$ with boundary $N$ as an element of
$K_*(C^*(\pi_1(M),\pi_1(N)))$, where this relative K-theory group measures the
difference between the two fundamental groups. The main geometric theorem of
\cite{CWY} then says that the
existence of a positive scalar curvature metric on $M$ which is collared at
the boundary implies the vanishing of this index. The argument for this
vanishing theorem is rather complicated and indeed contains a gap. We address
this gap in this paper. After the first version of the present article was
made public, \cite{GuoXieYu} was posted, which also attempts to fix this
gap.

More explicitly, the $K$-theory groups of the absolute and relative group $C^*$-algebras of the manifold and its boundary fit in a long exact sequence
\begin{equation}
 \rightarrow K_*(C^*(\pi_1(N))) \rightarrow K_*(C^*(\pi_1(M)))
\xrightarrow{j} K_*(C^*(\pi_1(M),\pi_1(N))) \rightarrow 
\label{eq:relseq}
\end{equation}
 The relative index $\mu([M,N])$ is defined as the image of a relative fundamental class
 $[M,N]\in K_{\dim M}(M,N)$ under a relative index map $\mu\colon K_*(M,N)\to
 K_*(C^*(\pi_1(M),\pi_1(N)))$. Here, $K_*(M,N)$ is the relative K-homology and
 $[M,N]$ is constructed with the help of the Dirac operator on $M$. Indeed, in
 this paper we mainly deal with a small variant of the construction of
 \cite{CWY} by choosing a slightly different $C^*$-completion. We discuss
 this in more detail below, throughout the introduction, we work with this
 modification. 

Our main goal is to better understand the vanishing theorem of Chang,
Weinberger and Yu, and to prove a
strengthening of it, at the same time giving a new and more conceptual
proof.

For our approach, recall that one has a perfectly well defined  K-theoretic
index of the
Dirac operator on a Riemannian manifold with boundary provided the boundary
operator is
invertible, for example if the metric is collared and of positive scalar
curvature near the boundary (see e.g.~\cite{PSc}). This index takes values in
$K_*(C^*(\pi_1(M)))$
and explicitly depends on the boundary operator (i.e.~on the positive scalar
curvature metric $g$ of the boundary). In the latter case we denote it by
$\Ind^{\pi_1(M)}(g)\in K_*(C^*(\pi_1(M)))$. Our main result states
that a slight variant of the relative index of 
Chang-Weinberger-Yu is the image of the absolute index defined with invertible
boundary operator under the natural homomorphism $j$ of~\eqref{eq:relseq}
(whenever this absolute index is defined):
\begin{theorem}\label{theo:main_intro}
  \begin{equation*}
  j(\Ind^{\pi_1(M)}(g)) = \mu([M,N]).
\end{equation*}
\end{theorem}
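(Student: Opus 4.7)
The plan is to use a concrete model for the relative group $K_*(C^*(\pi_1(M),\pi_1(N)))$, namely its identification with the K-theory of the mapping cone $C^*$-algebra $C_\phi$ of the $*$-homomorphism $\phi\colon C^*(\pi_1(N))\to C^*(\pi_1(M))$ induced by the inclusion of fundamental groups. Under this identification, the map $j$ in~\eqref{eq:relseq} corresponds to the canonical inclusion $b\mapsto (b,\text{constant }0)$ from $C^*(\pi_1(M))$ into $C_\phi$, and elements of $K_*(C_\phi)$ admit concrete descriptions as pairs (a bulk class over $C^*(\pi_1(M))$, a path in $C^*(\pi_1(N))$ exhibiting invertibility of the boundary symbol).

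In these terms, the absolute index $\Ind^{\pi_1(M)}_{\widetilde M}(\tilde g)$ is obtained from the twisted Dirac operator on $\widetilde M$ together with the invertible boundary operator $D_{\widetilde N}$ provided by positive scalar curvature on $N$ (via the Lichnerowicz--Weitzenb\"ock identity), whereas the relative index $\mu([M,N])$ is obtained from the same Dirac operator without an a priori invertibility assumption, so that the boundary symbol contributes a non-trivial path. When positive scalar curvature is present the boundary symbol is \emph{already} invertible, so one is free to take the path constant, and the resulting $C_\phi$-cycle is precisely $j$ applied to the absolute index. Making this rigorous will require (i) constructing both indices inside a common Mishchenko--Fomenko / Hilbert $C^*$-module framework in which the two indices differ only through the presence or absence of the path component, and (ii) checking that the resulting cycle representing $\mu([M,N])$ is independent of the chosen path — any two admissible paths differ by a loop whose class in $K_{*+1}(C^*(\pi_1(M)))$ is killed by $j$ in the exact sequence~\eqref{eq:relseq}.

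The main technical obstacle will be the careful handling of maximal $C^*$-completions. The inclusion $\pi_1(N)\to\pi_1(M)$ need not descend to a $*$-homomorphism of \emph{reduced} group $C^*$-algebras, so the relative algebra $C^*(\pi_1(M),\pi_1(N))$, the mapping cone model $C_\phi$, the long exact sequence~\eqref{eq:relseq} and the assembly maps all have to be formulated at the maximal completion level. Verifying that the $*$-homomorphisms, parametrices, and Kasparov cycles used above extend continuously to maximal completions, and that the identifications are compatible with them, is the substantive technical step and is precisely what the foundational sections of the paper are designed to provide; with these in hand, the mapping-cone cycle manipulation above upgrades from a formal argument into a proof of the theorem.
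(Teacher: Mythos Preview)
Your proposal is a strategy outline rather than a proof, and it takes a route the paper deliberately avoids. You propose to recast both indices as Mishchenko--Fomenko / Hilbert $C^*$-module cycles in the mapping cone $C_\phi$ and then observe that positive scalar curvature lets the boundary path be taken constant. But in this paper $\mu([M,N])$ is \emph{not} defined that way: it is the image of an explicit Roe/localisation-algebra cycle (built from $\Psi_t(D_{M_\infty})$, cut-offs $\chi_R$, and the CWY lifting procedure of Proposition~\ref{eps} and Theorem~\ref{prop:Lift_compatible_with_Y_to_X}) under the relative index map of Definition~\ref{def:relative_index}. Identifying that object with a Mishchenko--Fomenko mapping-cone cycle is itself a substantial theorem---essentially the content of Kubota's \cite{Kubota}, as the introduction notes---and would have to be proved before your path-straightening argument could be applied. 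So the genuine gap is that you have silently replaced the paper's $\mu([M,N])$ by a different model without justifying the identification.

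The paper's actual proof stays entirely inside the Roe-algebra picture and is a concrete norm estimate. The absolute index is represented using $\Psi_1(D)$ with $\Psi_1^2-1$ supported in the spectral gap (so $P_{\widetilde M_\infty}(1)-e_{11}\in C^*(\widetilde M\subset\widetilde M_\infty)^{\pi_1(M)}$ by Lichnerowicz, but $\Psi_1(D)$ is non-local). The relative index is represented for $t>1$ using $\Psi_t(D)$ with compactly supported Fourier transform (hence finite propagation, hence liftable to the $\pi_1(N)$-cover) together with a cut-off $\chi_R$. The entire argument is then the estimate \eqref{eq:norm_conv}: as $t\to 1$ and $R\to\infty$ the relative-index cycle converges in norm to the image under $j$ of the absolute-index cycle, using sup-norm continuity of $t\mapsto\Psi_t$ and the fact that cutting off an operator supported near $\widetilde M$ by $\chi_R$ is asymptotically the identity. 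One further point you missed: the theorem is proved not for the maximal completion but for the intermediate ``quotient'' completion $C^*_q$, precisely because $\Psi_1(D)$ (whose Fourier transform is not compactly supported) is not known to make sense in $C^*_{\max}$; your proposal does not engage with this issue either.
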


The absolute index $\Ind^{\pi_1(M)}( g)$ vanishes
whenever we have positive scalar curvature on all of $M$, implying immediately
the corresponding vanishing result for the relative index of Chang,
Weinberger, and Yu.



Relative index theory has recently been the subject of considerable
activity. In \cite{DG}, Deeley and Goffeng define a relative index map using geometric 
K-homology instead of
coarse geometry and prove index and  vanishing results similar to the main result of
our paper. However, this relies and uses the full package of higher
Atiyah-Patodi-Singer index theory (like \cite{LeichtnamPiazza}), which we
consider technically very
demanding and somewhat alien to the spirit of large scale index
theory. Indeed, in \cite{DG} it is not even proved in general that the
constructions coincide with the ones of \cite{CWY}. 
Yet another approach to relative index theory and the results of \cite{CWY} is
given by Kubota in \cite{Kubota}. There, the new concepts of relative Mishchenko bundles and
Mishchenko-Fomenko index theory are introduced, and heavy use is made of the
machinery of KK-theory. In \cite{Kubota}, a careful identification of the
different approaches is carried out.

The main point of our paper is its very direct and rather
easy approach to the index theorems as described above. We work entirely in the
realm of large scale index theory, and just rely on the basic properties of
the Dirac operator (locality, finite propagation of the wave operator,
ellipticity). We avoid APS boundary conditions and we avoid deep
KK-techniques. Such a direct approach is relevant also 
because it is more likely to allow for the construction of secondary
invariants, to be used for classification rather than obstruction purposes.

In \cite{CWY}, fundamental use is made of the
\emph{maximal} Roe and localisation algebras to obtain the required
functoriality needed e.g.~in the sequence \eqref{eq:relseq}. The
identification of its K-theory with K-homology of the space is needed for the
\emph{maximal} localisation algebra and reference is given to \cite{QR} for the
proof. However, that reference only deals with the \emph{reduced}
setting. Working out the details to extend the known
results to the maximal setting turned out to be rather non-trivial. The first
part of the present paper is devoted to the careful development of
foundational issues of maximal Roe and localisation algebras.
For us, this complete and careful
discussion of the properties of maximal completions in the context of coarse
index theory  is the second main contribution of this paper. Our results on
this are used e.g.~in \cite{DG}.

The maximal Roe algebra is defined in a rather ad hoc and ungeometric way: one
comes up with the (somewhat arbitrary) algebraic Roe algebra, a $*$-subalgebra
of bounded operators on a Hilbert space which is \emph{not} closed, and then
passes to the maximal $C^*$-closure. This is hard to control and to compute
(there are very few cases of actual computation), and geometric arguments are
very delicate. It required the whole additional unpublished preprint
\cite{GuoXieYu}, which appeared after the first version of this paper was posted, to prove the claim of \cite{CWY} that the
Schr\"odinger-Lichnerowicz vanishing theorem applies also to in the maximal
Roe algebra. This claim was unjustified in \cite{CWY}, as the authors of
\cite{GuoXieYu} also observe.

Our approach is going in a different direction. We propose to use instead of
the ad hoc maximal completion a much more geometric completion $C^*_q$, which we
introduce in Section \ref{sec:quotient_completion}. Problems with the
standard (reduced) Roe algebra arise in the equivariant setting of the group
$\Gamma$ acting on the space $X$ due to lack of functoriality. Our completion
takes all normal quotients $\Gamma/N$ 
acting on $X/N$ into account. This restores full functoriality, but is
completely geometric. The Schr\"odinger-Lichnerowicz formula and other
geometric arguments apply effortless.

The precise formulation of Theorem \ref{theo:main_intro} and of
\eqref{eq:relseq} requires to specify which completion is used. In our
approach, this becomes
$C^*_q(\pi_1(M),\pi_1(N))$, involving the completions of the group
algebras in the direct sum of the regular representation of all its 
quotients. Formally, the relative index in this K-theory group is weaker than the relative index obtained by using the maximal completion. However, not a single case is known where
extra information on obstructions and classification has been obtained from
the difference of the K-theory of the maximal and the reduced group
$C^*$-algebras, and the Novikov conjecture suggests that this should not be
possible. In any event, it seems extremely hard to exploit such a difference
for geometric means. So we believe that our approach and our completion is
a very good choice: full functoriality, no extra effort for geometric
arguments, in practice no loss of information.

\begin{remark}
  Our approach works for arbitrary, also non-cocompact situations. In the
  cocompact case, there is another way for geometric constructions: one works
  with the compact space, and with the infinite dimensional Mishchenko
  bundle. Here, one has the choice to use arbitrary group algebra completions,
  including the maximal one, which is used in \cite{DG} and \cite{Kubota}.
\end{remark}

\begin{remark}
  We present details of the construction and manipulation of the relative
  index and the vanishing theorem only in the case that the dimension of the
  manifold is even. We chose to do this because this is the most classical
  setup, and the constructions are particularly explicit and direct. This also
  means that  we remain close to the original treatment of \cite{CWY}.

  We discuss in Remark \ref{rem:odd_dimensions} how one can reduce the general
  case 
  to the even dimensional situation. We also discuss there how one could use
  the techniques of Zeidler \cite{RZA} combined with our setup to uniformly
  treat all dimensions and even the case of real $C^*$-algebras.
\end{remark}

In parts of the present paper we give missing arguments for some of the
results of the master thesis of 
Seyedhosseini \cite{Seyedhosseini}.

\subsection{Structure of the paper}

In Section \ref{sec:maxroe} we present our foundational results on \emph{maximal} Roe
algebras. In Section \ref{sec:quotient_completion}, we introduce our geometric 
functorial completed Roe algebra and establish its main properties. Section \ref{relindd} recalls the construction
of the relative index, following \cite{CWY}. We try to motivate the
construction, give additional details and fix small glitches in \cite{CWY}. Section
\ref{sec:proof} gives the proof of Theorem \ref{theo:main_intro}.

\medskip

\noindent\textbf{Acknowledgement}.
We would like to thank Ralf Meyer for useful discussions and the referee for
comments helping to improve the presentation.

\section{The Maximal Roe Algebra}
\label{sec:maxroe}
In the following, we will only consider separable and proper metric spaces with bounded geometry. We recall that a locally compact metric
$X$ space has bounded geometry if one can find a discrete subset $Y$ of $X$ such that:
\begin{itemize}
 \item There exists $c > 0$ such that every $x \in X$ has distance less than $c$ to some $y \in Y$.
 \item For all $r > 0$ there is $N_r$ such that  $\forall x \in X$ we have
   $|Y \cap B_r(x)|\le N_r$.
 \end{itemize}

A covering of a compact Riemannian manifold with the lifted metric obviously
has bounded geometry.
 
\subsection{Roe Algebras}\label{sec:max_fundationas}
Let $X$ be a separable and proper mertric space endowed with a free and proper action of a discrete group $\Gamma$ by isometries. In this section, we will recall the
definition of the Roe algebra associated to $X$. Let $\rho \colon C_0(X)
\rightarrow L(H)$ be an ample, nondegenerate representation of $C_0(X)$ on some separable
Hilbert space $H$. A representation of $C_0(X)$ is called ample if no non-zero element of $C_0(X)$ acts as a
compact operator on $H$. The representation $\rho$ is called
covariant for a unitary representation $\pi \colon \Gamma \rightarrow U(H)$ of
$\Gamma$ if $\rho(f_\gamma) = \ad_{\pi(\gamma)}\rho(f)\; \forall\gamma
\in \Gamma$. 
Here $f_\gamma$ denotes the function $x \mapsto f(\gamma^{-1}x)$.

From now on we will assume that $\rho$ is an ample and covariant representation of $C_0(X)$ as above. By an abuse of notation we will denote $\rho(f)$ simply
by $f$. We will later use representations of $C_0(X)$ which are an infinite direct sum of copies of an ample representation. Such representations are called very ample.  

\begin{definition}
 An operator $T \in L(H)$ is called a finite propagation operator if there
 exists an $r > 0$ such that $fTg = 0$ for all those $f,g \in C_0(X)$ with the
 property
 $d(\supp(f),\supp(g)) \ge r$. The smallest such $r$ is called the propagation of $T$ and is denoted by $\prop T$. An operator $T \in L(H)$ is called locally
 compact if $Tf$ and $fT$ are compact for all $f \in C_0(X)$.
\end{definition}

\begin{definition}
 Denote by $\mathbb{R}_\rho(X)^{\Gamma}$ the $*$-algebra of finite propagation, locally compact operators in $L(H)$ which are furthermore invariant under the
 action of the group $\Gamma$. We will call $\mathbb{R}_\rho(X)^{\Gamma}$ the algebraic Roe algebra of $X$. The maximal Roe algebra associated to the space
 $X$ is the maximal $C^*$-completion of $\mathbb{R}_\rho(X)^{\Gamma}$, i.e.\
 the completion of $\mathbb{R}_\rho(X)^{\Gamma}$ with respect to the supremum
 of all $C^*$-norms. This supremum is finite for spaces of bounded geometry by
 Proposition \ref{prop:max_exists}. It will be denoted by $C^*_{\rho,\maxi}(X)^\Gamma$. The reduced
 Roe algebra is the completion of the latter $*$-algebra using the norm in $L(H)$. We denote this algebra by $C^*_{\rho,\red}(X)^\Gamma$.
\end{definition}

\begin{proposition}\label{prop:max_exists}
\label{max_well_def}
  Suppose $X$ has bounded geometry. For every $R>0$ there is a constant $C_R$ such that for every $T\in
  \mathbb{R}_{\rho}(X)^\Gamma$ with propagation less than $R$ and every $*$-representation
  $\pi\colon \mathbb{R}(X)^\Gamma\to L(H^\prime)$ we have
  \begin{equation*}
    || \pi(T)||_{L(H^\prime)} \le C_R ||T||_{C^*_{\rho,\red}(X)^\Gamma}.
  \end{equation*}
  In particular, $||T||_{C^*_{\rho,\maxi}(X)^\Gamma}\le C_R||T||_{C^*_{\rho,\red}(X)^{\Gamma}}$
  and the bounded geometry assumption on $X$ implies that the maximal Roe
  algebra is well-defined.
\end{proposition}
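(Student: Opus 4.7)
The plan is a standard partition-of-unity plus chromatic decomposition, which reduces the norm estimate for $T$ of propagation $\le R$ to the control of a bounded number of ``block-diagonal'' summands whose norms in any $*$-representation can be read off from their local pieces.

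First, using bounded geometry, I would pick a $\Gamma$-invariant, uniformly discrete and coarsely dense subset $Y\subset X$ (lifting a net from $X/\Gamma$, or constructing one via a transversal argument in the non-cocompact case), together with a $\Gamma$-equivariant partition of unity $\{\chi_y\}_{y\in Y}\subset C_c(X)$ with $\chi_y$ supported in a ball of fixed small radius $\delta$ around $y$. For $T\in\mathbb{R}(X)^\Gamma$ with $\prop(T)\le R$, one formally expands
\[
T=\sum_{(y,y')\in Y\times Y,\ d(y,y')\le R+2\delta}\chi_y\,T\,\chi_{y'},
\]
a locally finite sum by bounded geometry.

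Next, bounded geometry yields a bound $N=N_R$ on the chromatic number of the ``interaction graph'' on these pairs, so one can write $T=T_1+\dots+T_N$, where in each $T_i$ the cutoff functions appearing on the left (resp.\ on the right) have pairwise disjoint supports. In any $*$-representation $\rho$ — extended to act on the cutoffs $\chi_y$ through the multiplier algebra of $\mathbb{R}(X)^\Gamma$, which is the place where the main technical care is required — the mutually orthogonal positive multipliers $\chi_y$ force $\|\rho(T_i)\|=\sup_{(y,y')\text{ in color }i}\|\rho(\chi_y T\chi_{y'})\|$. Finally, each individual block $\chi_y T\chi_{y'}$ lives in a uniformly bounded compact ``corner'' of $\mathbb{R}(X)^\Gamma$, and its norm in any $*$-representation is dominated by its operator norm on $H$, uniformly in $(y,y')$ and bounded by $\|T\|_{C^*_{\red}(X)^\Gamma}$. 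Combining these gives $\|\rho(T)\|\le N_R\cdot\|T\|_{C^*_{\red}(X)^\Gamma}$, whence $\|T\|_{C^*_{\maxi}(X)^\Gamma}\le C_R\|T\|_{C^*_{\red}(X)^\Gamma}$ with $C_R=N_R$. Since the reverse inequality $\|T\|_{C^*_{\red}(X)^\Gamma}\le\|T\|_{C^*_{\maxi}(X)^\Gamma}$ is tautological, the two norms are equivalent on each propagation stratum, and well-definedness of $C^*_{\maxi}(X)^\Gamma$ follows because the supremum of all $C^*$-seminorms is finite on each single element of $\mathbb{R}(X)^\Gamma$.

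The main obstacle is the step estimating $\|\rho(\chi_y T\chi_{y'})\|$ by $\|T\|_{L(H)}$: the cutoff functions $\chi_y$ are not themselves elements of $\mathbb{R}(X)^\Gamma$ (they are neither $\Gamma$-invariant nor $\Gamma$-finite sums of such), so one cannot simply apply $\rho$ to them. One must either carefully extend $\rho$ to the appropriate multiplier algebra of $\mathbb{R}(X)^\Gamma$ (verifying that $C_0(X)$ does sit inside this multiplier algebra, since pointwise multiplication preserves locally compactness and propagation), or reduce to a fundamental-domain picture in which the blocks become genuine algebra elements. Executing this carefully — so that the elementary block norms are truly controlled by $\|T\|_{L(H)}$, uniformly in $(y,y')$ via the bounded geometry packing constants — is the technical heart of the proof.
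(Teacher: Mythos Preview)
The paper itself gives no argument; it simply cites \cite{GongWangYu}*{Lemma 3.4 and Section 4.4}. Your outline is essentially the Gong--Wang--Yu chromatic (partial-translation) decomposition, so the overall strategy is the right one.

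However, of the two routes you propose for the ``main obstacle'', the first is not merely technical but actually fails. You suggest verifying that $C_0(X)$ sits inside the multiplier algebra of $\mathbb{R}(X)^\Gamma$ because ``pointwise multiplication preserves local compactness and propagation''. It preserves those, but it does \emph{not} preserve $\Gamma$-invariance: for $f\in C_0(X)$ not $\Gamma$-invariant and $T\in\mathbb{R}(X)^\Gamma$, the product $fT$ is generally not $\Gamma$-invariant, hence not in $\mathbb{R}(X)^\Gamma$. So the individual cutoffs $\chi_y$ are not multipliers of the \emph{equivariant} algebraic Roe algebra, the expressions $\rho(\chi_y T\chi_{y'})$ are undefined, and your identity $\|\rho(T_i)\|=\sup_{(y,y')}\|\rho(\chi_y T\chi_{y'})\|$ has no meaning for an abstract $*$-representation $\rho$.

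The way out is your second route, carried out so that $\rho$ is never applied to non-invariant pieces. Choose the coloring $\Gamma$-invariantly (color $\Gamma$-orbits of pairs), so each $T_i$ is a genuine element of $\mathbb{R}(X)^\Gamma$, and then estimate $T_i^*T_i$ rather than the individual blocks. By disjointness of the right-hand supports within a color, $T_i^*T_i$ lies in the $\Gamma$-invariant block-diagonal $*$-subalgebra, which is already a $C^*$-algebra in the operator norm; hence $\rho$ is automatically contractive there and $\|\rho(T_i)\|^2=\|\rho(T_i^*T_i)\|\le\|T_i^*T_i\|_{\red}\le\|T\|_{\red}^2$. This is cleanest after first passing to the uniformly discrete model (as in \cite{GongWangYu}*{Section 4.4}), where ``block diagonal'' means propagation~$0$ and the relevant $C^*$-subalgebra is $l^\infty(Y;K(H))^\Gamma$; the paper invokes the argument in exactly this form later, in the proof of Lemma~\ref{decp}.
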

\begin{proof}
 This follows from \cite{GongWangYu}*{Lemma 3.4} and \cite{EM}*{Theorem 2.7}.
\end{proof}

  Note that Proposition \ref{max_well_def} implies that restricted to the subset of operators of propagation bounded by $R$, the reduced and the maximal norms are equivalent.

\begin{proposition}\label{prop:Roe_alg_well_def}
 The $K$-theory groups of the reduced and maximal Roe algebra are independent of the chosen ample and covariant
 representation up to a canonical isomorphism.
\label{kind}
 \end{proposition}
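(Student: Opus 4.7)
The strategy is to adapt the classical argument for the reduced Roe algebra, ensuring at every step that the constructed morphisms are already defined at the algebraic level so that they extend to the maximal completion via the universal property.

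Given two ample covariant representations $(\rho_1,H_1)$ and $(\rho_2,H_2)$, I would compare them through the direct sum representation $\rho_1\oplus\rho_2$ on $H_1\oplus H_2$, which is again ample and covariant. The block inclusion $\iota_i\colon T\mapsto T\oplus 0$ is a $*$-homomorphism of $*$-algebras $\mathbb{R}_{\rho_i}(X)^\Gamma \to \mathbb{R}_{\rho_1\oplus\rho_2}(X)^\Gamma$; by the universal property of the maximal $C^*$-norm, $\iota_i$ extends to a $*$-homomorphism of the maximal Roe algebras, and it is also continuous for the reduced norm. By symmetry it then suffices to show that each $\iota_i$ induces an isomorphism on K-theory, in which case the canonical isomorphism is $\iota_{2,*}^{-1}\circ \iota_{1,*}$.

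To produce a K-theoretic inverse to $\iota_i$, I would construct, using ampleness together with bounded geometry and equivariance, a $\Gamma$-equivariant isometry $V_i\colon H_i \to H_1\oplus H_2$ covering the identity map of $X$, in the sense that $V_i \rho_i(f) - (\rho_1\oplus\rho_2)(f) V_i$ is compact for every $f\in C_0(X)$. The adjoint map $T\mapsto V_i^* T V_i$ then sends elements of $\mathbb{R}_{\rho_1\oplus\rho_2}(X)^\Gamma$ to operators on $H_i$ whose cutdowns by sufficiently separated $C_0(X)$-functions are compact; since compact operators already lie in $\mathbb{R}_{\rho_i}(X)^\Gamma$, one lands in $\mathbb{R}_{\rho_i}(X)^\Gamma$ itself. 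This is a $*$-homomorphism of $*$-algebras, extending again to the maximal and reduced completions by the universal property. A standard rotation-of-isometries homotopy, in the spirit of the Cuntz picture of stabilisation, then shows that this adjoint map is inverse to $\iota_i$ on K-theory.

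The main obstacle is the construction of the $\Gamma$-equivariant covering isometry $V_i$. For trivial $\Gamma$, this is a classical Voiculescu-type result relying only on ampleness and separability of the Hilbert spaces. In the equivariant setting, the freeness and properness of the $\Gamma$-action combined with bounded geometry of $X$ allow one to reduce to a cocompact fundamental domain and then extend equivariantly. A secondary subtlety, and the place where the maximal case genuinely differs from the reduced one, is that all homotopies and compositions must be checked to exist at the $*$-algebraic level of $\mathbb{R}_\rho(X)^\Gamma$, so that the universal property of the maximal completion can be applied; Proposition \ref{prop:max_exists}, which gives uniform maximal-norm estimates on operators of bounded propagation, is what makes such homotopies behave correctly in the maximal setting.
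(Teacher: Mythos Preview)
Your overall strategy---build the comparison maps at the level of $\mathbb{R}_\rho(X)^\Gamma$ by conjugation with covering isometries, then invoke the universal property of the maximal completion---is exactly what the paper does (it simply cites the isometries of \cite{HR}*{Section 6.3} and observes that $\ad_V$ is already a $*$-homomorphism on the algebraic Roe algebra, with uniqueness up to inner automorphism after stabilisation). However, there is a genuine error in your execution: the map $T\mapsto V_i^*TV_i$ is \emph{not} a $*$-homomorphism when $V_i\colon H_i\to H_1\oplus H_2$ is a non-surjective isometry, because $V_iV_i^*\neq I$ and hence $V_i^*STV_i\neq (V_i^*SV_i)(V_i^*TV_i)$ in general. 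Conjugation $\ad_V(T)=VTV^*$ is multiplicative precisely because $V^*V=I$, so the isometry must point \emph{from} the source Hilbert space \emph{to} the target one. The fix is easy: construct instead a covering isometry $U_i\colon H_1\oplus H_2\to H_i$ (ampleness of $\rho_i$ makes this possible) and use $T\mapsto U_iTU_i^*$; or, more directly, skip the direct sum entirely and build covering isometries $H_1\to H_2$ and $H_2\to H_1$, as in \cite{HR}.

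A secondary point: the covering condition you impose, namely that $V_i\rho_i(f)-(\rho_1\oplus\rho_2)(f)V_i$ be compact, is too weak to guarantee that $\ad_V$ preserves the \emph{algebraic} Roe algebra---it only preserves it modulo compacts, and compacts need not have finite propagation in your sense. The isometries in \cite{HR}*{Section 6.3} satisfy the stronger condition that $\supp(V)$ lies in a controlled neighbourhood of the diagonal, which is what makes $\ad_V$ land in $\mathbb{R}_\rho(X)^\Gamma$ on the nose and hence extend to the maximal completion.
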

\begin{proof}
 In the reduced case, this is the content of \cite{HR}*{Corollary 6.3.13}. For the maximal case we just note that conjugation by the
 isometries of the kind handled in
 \cite{HR}*{Section 6.3} gives rise to $*$-homomorphisms of the
 \textit{algebraic} Roe algebra and thus extend to morphisms of the maximal
 Roe algebras. Up to stabilisation, any two such morphisms can be obtained from each other by conjugation by a unitary
 making the induced map in K-theory canonical.
\end{proof}

\begin{remark}
 As a consequence of Proposition~\ref{kind} we will drop $\rho$ in our notation for the Roe algebras. Later we will introduce a new completion of
 $\mathbb{R}(X)^\Gamma$, which sits between the reduced and maximal completions and denote it by $C^*_{q}(X)^\Gamma$. Moreover, if $\Gamma$ is the trivial group, we will denote the Roe algebra by $C^*_{d}(X)$, where $d$ stands for the chosen completion.
\end{remark}

\begin{proposition}
 The $K$-theory of the maximal Roe algebra is functorial for coarse maps
 between locally compact metric spaces.
\end{proposition}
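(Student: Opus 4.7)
The plan is to adapt the standard covering-isometry construction (see e.g.~\cite{HR}*{Section 6.3}) to the maximal setting, using the universal property of the maximal completion for the extension step. Given a coarse map $f\colon X \to Y$ and ample representations on Hilbert spaces $H_X, H_Y$, I would first construct an isometry $V\colon H_X \to H_Y$ — after possibly stabilising $H_X$ by tensoring with $\ell^2(\mathbb{N})$, which is harmless by Proposition~\ref{prop:Roe_alg_well_def} — that \emph{covers} $f$, in the sense that there exists $R>0$ with $g\, V\, h = 0$ whenever $g \in C_0(Y)$, $h \in C_0(X)$ satisfy $d(\supp g, f(\supp h)) > R$. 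The existence of such a $V$ under the bounded geometry hypothesis is classical.

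Conjugation $\phi_V\colon T \mapsto VTV^*$ then sends the algebraic Roe algebra $\mathbb{R}(X)$ into $\mathbb{R}(Y)$: if $\prop T \le r$ then $\prop(VTV^*) \le r + 2R$, and local compactness is preserved because the covering condition converts compactly supported cutoffs on $Y$ into compactly supported cutoffs on $X$ (using that $f$ maps bounded sets to bounded, hence precompact, sets). The resulting $*$-homomorphism extends to the maximal completions by the defining universal property: any $*$-representation of $\mathbb{R}(Y)$ pulled back along $\phi_V$ is a $*$-representation of $\mathbb{R}(X)$, hence $\|\phi_V(T)\|_\sigma \le \|T\|_{C^*_{\maxi}(X)}$ for every $C^*$-seminorm $\sigma$ on the target, yielding the extension $C^*_{\maxi}(X) \to C^*_{\maxi}(Y)$.

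Independence of the choice of $V$ on the induced map in $K$-theory is obtained by the standard direct-sum-and-rotation argument: for two covering isometries $V_0, V_1$, one passes to $V_0\oplus V_1$ and uses a homotopy of rotations in $M_2$ (which preserves the covering property) to relate the resulting $*$-homomorphisms into $M_2(C^*_{\maxi}(Y))$, combined with stability of $K$-theory. Functoriality for composition is then immediate: if $V$ covers $f$ and $W$ covers $g$, then $WV$ covers $g\circ f$ and $\phi_{WV} = \phi_W \circ \phi_V$ on the algebraic level, which persists after passing to maximal completions by uniqueness of the extension.

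The main obstacle — and essentially the only new point compared to the reduced case — is verifying that $\phi_V$ is an honest $*$-homomorphism of \emph{$*$-algebras}, rather than merely of norm-closed operator algebras, so that it respects every $C^*$-seminorm on the source. Once this is in hand, Proposition~\ref{prop:max_exists} guarantees finiteness of the maximal norm, the universal property supplies the extension automatically, and the remaining verifications run in direct parallel to the reduced case.
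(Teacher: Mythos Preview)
Your proposal is correct and follows essentially the same route as the paper: construct a covering isometry, observe that conjugation by it yields a $*$-homomorphism of the \emph{algebraic} Roe algebras, extend to the maximal completions by the universal property, and then use the stabilisation/rotation argument to see that the induced map on $K$-theory is canonical (hence functorial). The paper's proof is considerably terser but makes exactly these points, referring back to Proposition~\ref{prop:Roe_alg_well_def} for the canonicity step.
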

\begin{proof}
 The proof is similar to that of Proposition~\ref{kind} and makes use of
 it. In the reduced case, this is proved by constructing an appropriate
 isometry between the  representation spaces. Conjugation with the latter
 isometry gives rise to a $*$-homomorphisms of the
 \textit{algebraic} Roe algebra and thus extends to a morphism of the reduced and maximal Roe algebra. The latter then gives rise to
 homomorphisms of the $K$-theory groups of the Roe algebra. As in the proof of Proposition
 \ref{prop:Roe_alg_well_def}, the induced map in K-theory is canonical which
 also implies functoriality. See \cite{HR}*{Section 6.3} for a more detailed discussion. 
\end{proof}

In the case where $\Gamma$ acts cocompactly on $X$, we have the following
theorem. 
\begin{theorem} \label{eg}
 Suppose that $\Gamma$ acts cocompactly on $X$. Then $K_*(C^*_{\maxi}(X)^\Gamma) \cong K_*(C^*_{\maxi}(\Gamma))$.
\end{theorem}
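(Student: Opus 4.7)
The plan is to set up a Morita-type equivalence between $C^*_{\maxi}(X)^\Gamma$ and $C^*_{\maxi}(\Gamma)$, stabilised by the compact operators, and then invoke $K$-theory stability. Concretely, I will produce a $*$-isomorphism of $\mathbb{R}(X)^\Gamma$ with $\mathbb{C}[\Gamma]\otimes_{\mathrm{alg}}\mathcal{K}(H_0)$ for an auxiliary Hilbert space $H_0$ built from a fundamental domain, then upgrade this to a $C^*$-isomorphism on maximal completions.

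First, I use cocompactness, freeness and properness of the $\Gamma$-action to choose a precompact Borel fundamental domain $F\subset X$. Set $H_0:=\rho(\chi_F)H$ (the representation extends to bounded Borel functions), so that $H = \bigoplus_{\gamma\in\Gamma}\pi(\gamma)H_0$ $\Gamma$-equivariantly. Every $T\in\mathbb{R}(X)^\Gamma$ is then uniquely encoded by its ``matrix'' $(T_\delta)_{\delta\in\Gamma}$ with $T_\delta := \rho(\chi_F)\,T\,\pi(\delta)\,\rho(\chi_F) \in L(H_0)$. Finite propagation combined with cocompactness forces $T_\delta = 0$ for all but finitely many $\delta$, and local compactness together with precompactness of $F$ makes each $T_\delta$ a compact operator on $H_0$. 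A direct computation shows that operator multiplication corresponds to convolution over $\Gamma$, yielding a $*$-algebra isomorphism
\[
\Phi\colon \mathbb{R}(X)^\Gamma \xrightarrow{\cong} \mathbb{C}[\Gamma] \otimes_{\mathrm{alg}} \mathcal{K}(H_0).
\]

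Next I identify the maximal $C^*$-completions on both sides. Any $*$-representation $\sigma$ of $\mathbb{R}(X)^\Gamma$ translates under $\Phi$ to a pair of commuting $*$-representations: one of $\mathbb{C}[\Gamma]$ (bounded by the maximal group $C^*$-norm by definition) and one of $\mathcal{K}(H_0)$ (which carries a unique $C^*$-norm). Since $\mathcal{K}(H_0)$ is nuclear, the spatial and maximal tensor product $C^*$-norms on $\mathbb{C}[\Gamma]\otimes_{\mathrm{alg}} \mathcal{K}(H_0)$ coincide, and the supremum over all such $\sigma$ realises exactly this unique tensor $C^*$-norm. Thus $\Phi$ extends to a $C^*$-isomorphism $C^*_{\maxi}(X)^\Gamma \cong C^*_{\maxi}(\Gamma) \otimes \mathcal{K}(H_0)$, and stability of $K$-theory under tensoring with compact operators gives the desired identification of $K$-groups.

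The main obstacle I anticipate is the maximal tensor product step: while the algebraic isomorphism and the nuclearity argument are standard, one must verify carefully that an arbitrary $*$-representation of $\mathbb{R}(X)^\Gamma$ genuinely decomposes as commuting representations of the two tensor factors compatibly with the convolution structure, and that the resulting norm bound passes uniformly to all representations. The bounded geometry hypothesis, via Proposition \ref{prop:max_exists}, is what guarantees the relevant supremum is finite in the first place, so that the maximal completion is well defined; without it the whole strategy collapses. Once these points are in place, the reduced version runs along the same lines but is easier, since there one may invoke the spatial norm directly rather than arguing through nuclearity.
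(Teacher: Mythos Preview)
Your proposal is correct and matches the approach the paper relies on: the paper simply cites \cite{GongWangYu} for this theorem, but the fundamental-domain decomposition $\mathbb{R}(X)^\Gamma\cong\mathbb{C}[\Gamma]\odot K(H_0)$ together with nuclearity of the compacts is precisely what is spelled out and used elsewhere in the paper (see Lemma~\ref{relext} and the proof of Theorem~\ref{prop:Lift_compatible_with_Y_to_X}). The ``obstacle'' you flag---decomposing an arbitrary $*$-representation into commuting representations of the two tensor factors---is handled there via \cite{MUR}*{Theorem 6.3.5}, so your concern is legitimate but already addressed by the standard machinery the paper invokes.
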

\begin{proof}
 See \cite{GongWangYu}*{Section 3.12 \& 3.14} for the isomorphism $C^*_{\max}(|\Gamma|)^{\Gamma} \cong C^*_{\max}(\Gamma) \otimes K(H)$, where $C^*_{\max}(|\Gamma|)^{\Gamma}$ is the equivariant Roe algebra of $\Gamma$ seen as a metric space using some word metric. The action of $\Gamma$ on itself is given by left multiplication. Since the action of $\Gamma$ on $X$ is cocompact, the $\Gamma$-space $X$ is coarsely equivalent to $\Gamma$. This implies that $K_*(C^*_{\max}(X)^\Gamma) \cong K_*(C^*_{\max}(\Gamma)^\Gamma)$. The claim then follows from the stability of $K$-theory. 
\end{proof}

For a $\Gamma$-invariant closed subset $Y$ of $X$, we would like to define its
Roe algebra relative to $X$ as a closure of a space of operators in
$C^*_{\maxi}(X)^\Gamma$, which are suitably supported near $Y$. The next two definitions make this precise.
\begin{definition}
 For an operator $T \in L(H)$ we define the support $\supp T$ of $T$ as the complement of the union of all open sets $U_1\times U_2 \subset X \times X$ with the
 property that $fTg = 0$ for all $f$ and $g$ with $\supp f \subset U_1$ and $\supp g \subset U_2$. $T$ is said to
 be \emph{supported near $Y \subset X$} if there exists $r > 0$ such that $\supp T \subset B_r(Y) \times B_r(Y)$. Here and afterwards $B_r(Y)$ denotes the open
 $r$-neighbourhood of $Y$.
 \label{supp}
\end{definition}
\begin{definition}
 For a $\Gamma$-invariant closed subset $Y$ of $X$ as above, denote by $\mathbb{R}(Y\subset X)^\Gamma$ the $*$-algebra of operators in $\mathbb{R}(X)^\Gamma$
 which are supported near $Y$. The relative Roe algebra of $Y$ in $X$ is defined as the closure of $\mathbb{R}(Y\subset X)^\Gamma$
 in $C^*_{\maxi}(X)^\Gamma$ and is an ideal inside the latter $C^*$-algebra. It is denoted by
 $C^*_{\maxi}(Y \subset X)^\Gamma$.
 \label{relroe}
\end{definition}
Since $Y$ is a locally compact metric space with an action of $\Gamma$, it has its own (absolute) equivariant Roe algebra
$C^*_{\maxi}(Y)^\Gamma$. Theorem~\ref{theo:abs_vers_rel} identifies the
$K$-theory of the relative and absolute equivariant Roe
algebras in the case, where the action of $\Gamma$ on the subset is cocompact. However, for its proof we need further
conditions on the group action. 
\begin{definition}
 Let $\Gamma$ act freely and properly by isometries on $X$. $\Gamma$ is said
 to \emph{act conveniently} if there exists a fundamental domain $F$ for the action of $\Gamma$ satisfying:
 \begin{itemize}
 \item For each $R>0$, there exist $\gamma_1,\hdots \gamma_{N_R} \in \Gamma$ such that $B_R(F) \subset \bigcup_{i=1}^{N_R} \gamma_i\cdot F$
  \item For each $\gamma \in \Gamma$ and $R > 0$ there exists $S(R,\gamma) >
    0$ such that $\gamma^{-1}B_R(x) \cap F \subset B_{S(R,\gamma)}(x)$ fof all
    $x\in F$.
 \end{itemize}

\end{definition}

\begin{theorem}\label{theo:abs_vers_rel}
 Let $Y$ and $X$ be as above and suppose that $\Gamma$ acts conveniently on
 $X$ and cocompactly on $Y$. The inclusion
 $Y \rightarrow X$ induces an isomorphism
 $K_*(C^*_{\maxi}(Y)^\Gamma) \cong K_*(C^*_{\maxi}(Y \subset X)^\Gamma)$.
 \label{relroei}
\end{theorem}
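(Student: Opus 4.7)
The strategy is to exhibit both sides as computing $K_*(C^*_{\maxi}(\Gamma))$ via the cocompact case (Theorem \ref{eg}), and to show the inclusion realises the canonical identification. For this I would express the relative algebra as an inductive limit of maximal Roe algebras of cocompact pieces, and then transport the isomorphism along this system.

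First, I would arrange the representations: pick an ample covariant representation $\rho_X\colon C_0(X)\to L(H_X)$ and extract from it an ample covariant representation $\rho_Y$ for $Y$ (e.g.\ by localising $H_X$ to $Y$ via an appropriate partition of unity, or by using Proposition \ref{prop:Roe_alg_well_def} to know the choice is immaterial). Conjugation by the resulting covariant isometry $V\colon H_Y\hookrightarrow H_X$ extends the inclusion of algebraic Roe algebras $\mathbb{R}(Y)^\Gamma \hookrightarrow \mathbb{R}(Y\subset X)^\Gamma$ to the map of maximal completions whose K-theory homomorphism we must invert. Next, for each $r>0$ let $Y_r := \overline{B_r(Y)}$. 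The second convenient-action hypothesis guarantees that $\Gamma$ still acts cocompactly on $Y_r$, so Theorem \ref{eg} applies and gives $K_*(C^*_{\maxi}(Y_r)^\Gamma)\cong K_*(C^*_{\maxi}(\Gamma))$ canonically (independent of $r$).

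The next step is the identification $C^*_{\maxi}(Y_r\subset X)^\Gamma \cong C^*_{\maxi}(Y_r)^\Gamma$. The algebraic versions of both sides are naturally isomorphic: an operator on $H_X$ supported in $B_{s}(Y_r)\times B_{s}(Y_r)$ can be interpreted as an operator on the $C_0(Y_r)$-module obtained by cutting down $H_X$ with the characteristic function of a large enough neighbourhood of $Y_r$, and conversely any operator on an ample covariant module for $Y_r$ embeds into $H_X$ via a covariant isometry. Proposition \ref{prop:max_exists}, combined with Proposition \ref{prop:Roe_alg_well_def}, shows this algebraic identification passes to the maximal completions, since a bound on propagation is preserved. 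By Definition \ref{relroe},
\begin{equation*}
C^*_{\maxi}(Y\subset X)^\Gamma = \overline{\bigcup_{r>0}\mathbb{R}(Y_r\subset X)^\Gamma}
\end{equation*}
in the maximal norm, and this exhibits it as the C$^*$-inductive limit
\begin{equation*}
C^*_{\maxi}(Y\subset X)^\Gamma \cong \varinjlim_{r\to\infty} C^*_{\maxi}(Y_r)^\Gamma.
\end{equation*}

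Finally, continuity of K-theory for inductive limits of C$^*$-algebras yields $K_*(C^*_{\maxi}(Y\subset X)^\Gamma)\cong \varinjlim_r K_*(C^*_{\maxi}(Y_r)^\Gamma)$. Each transition map $K_*(C^*_{\maxi}(Y_r)^\Gamma)\to K_*(C^*_{\maxi}(Y_{r'}))^\Gamma)$ $(r<r')$ fits into a commutative diagram with the canonical iso to $K_*(C^*_{\maxi}(\Gamma))$ from Theorem \ref{eg}, so each transition map is an isomorphism. The inclusion $Y\subset Y_r$ gives a compatible system of isomorphisms with source $K_*(C^*_{\maxi}(Y)^\Gamma)$, so the map $K_*(C^*_{\maxi}(Y)^\Gamma)\to K_*(C^*_{\maxi}(Y\subset X)^\Gamma)$ is an isomorphism.

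\textbf{Main obstacle.} The subtle step is the identification $C^*_{\maxi}(Y_r\subset X)^\Gamma\cong C^*_{\maxi}(Y_r)^\Gamma$ at the level of maximal completions. The reduced case is a routine cut-off with an isometry, but for the maximal norm one must know that every $*$-representation of $\mathbb{R}(Y_r\subset X)^\Gamma$ extends (or restricts) compatibly to $\mathbb{R}(Y_r)^\Gamma$ with comparable norms. This is precisely what the propagation-uniform estimate of Proposition \ref{prop:max_exists} buys us, provided one carefully checks that the covariant ample representation of $C_0(Y_r)$ induced from $\rho_X$ is genuinely ample (i.e.\ no nonzero element acts compactly), which is where the convenient-action hypothesis (the uniform lower bound $d(x,\gamma x)>c$ together with the controlled growth of neighbourhoods of the fundamental domain) is needed to avoid any unwanted compactness when restricting the representation.
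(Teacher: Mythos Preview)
Your overall strategy---express $C^*_{\maxi}(Y\subset X)^\Gamma$ as a $C^*$-inductive limit of the $C^*_{\maxi}(Y_r)^\Gamma$, use continuity of $K$-theory, and conclude via coarse equivalence of the $Y_r$---is exactly the route the paper takes. The gap is in the step you yourself flag as the main obstacle.

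You claim that Proposition~\ref{prop:max_exists} gives the identification of the maximal norm on $\mathbb{R}(Y_r)^\Gamma$ with the norm inherited from $C^*_{\maxi}(X)^\Gamma$. It does not. What Proposition~\ref{prop:max_exists} provides is an inequality $\|T\|_{\maxi}\le C_R\|T\|_{\red}$ for operators of propagation $\le R$, with $C_R$ depending on $R$. Chaining this with $\|T\|_{\red}\le\|T\|_{C^*_{\maxi}(X)^\Gamma}$ gives only $\|T\|_{C^*_{\maxi}(Y_r)^\Gamma}\le C_R\,\|T\|_{C^*_{\maxi}(X)^\Gamma}$, a bound that blows up with the propagation and therefore does not pass to the completion. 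In particular, to verify the universal property of the direct limit you need: given compatible $\phi_r\colon C^*_{\maxi}(Y_r)^\Gamma\to A$, the induced $*$-map $\mathbb{R}(Y\subset X)^\Gamma\to A$ is continuous for the norm from $C^*_{\maxi}(X)^\Gamma$. That amounts to showing every $*$-representation of $\mathbb{R}(Y_r)^\Gamma$ \emph{extends} to one of $\mathbb{R}(X)^\Gamma$, so that its norm is dominated by the $C^*_{\maxi}(X)^\Gamma$-norm with constant $1$. This extension statement is the content of Lemma~\ref{relext} in the paper, and its proof is where the convenience hypothesis genuinely enters: one uses the decomposition $\mathbb{R}(Y_r)^\Gamma\cong\mathbb{C}[\Gamma]\odot K(\tilde H_{Y_r})$ (cocompactness) and the fact that the finite-propagation $\Gamma$-invariant operators on $X$ lie in the image of $\mathbb{C}[\Gamma]\odot L(\tilde H_X)$ (convenience), together with an extension argument through $C^*_{\maxi}(\Gamma)\otimes K$.

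Your diagnosis that convenience is needed ``to avoid any unwanted compactness when restricting the representation'' is therefore off the mark; ampleness of the compressed module is a minor issue handled by Proposition~\ref{prop:Roe_alg_well_def}, whereas convenience is used to guarantee that representations of the cocompact piece extend to all of $\mathbb{R}(X)^\Gamma$.
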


\begin{remark}
  A representation $\rho\colon C_0(X) \rightarrow L(H_X)$ gives rise to a
  spectral measure which can be used to extend $\rho$ to the $C^*$-algebra
  $B_\infty(X)$ of bounded Borel functions on $X$ (see \cite{MUR}*{Theorem
    2.5.5}). Given $Y \subset X$, we get a representation
  $C_0(Y) \rightarrow L(\chi_YH_X)$. This is what is meant in the following
  Lemma~\ref{relext} by ``compressing the representation space of $C_0(X)$ in
  order to obtain a representation of $C_0(Y)$''. Given $Y$ as above we can choose $\rho$ such that it and its compression to $Y$ are both ample; for example, by choosing the ample representation of $X$ to be given by multiplication of functions with square summable sequences on some countable dense subset of $X$ whose intersection with $Y$ is a dense subset of $Y$. We will need
  Lemma~\ref{relext} for the proof of Theorem \ref{theo:abs_vers_rel}. Indeed,
  the novel difficulty in Theorem \ref{theo:abs_vers_rel} is to relate
  the $*$-representations used in the definition of $C^*_{\maxi}(Y)^\Gamma$
  with the 
 $*$-representations used to define $C^*_{\maxi}(X)^\Gamma$ ---of which
 $C^*_{\maxi}(Y\subset X)^\Gamma$ by definition is an ideal. Note that, at the
 moment, we only manage to do this if $Y$ is cocompact and the $\Gamma$-action is convenient.
 It is an interesting challenge to generalise Theorem \ref{theo:abs_vers_rel} to arbitrary pairs
 $(X,Y)$ and arbitrary free and proper actions. 
 \label{rem:difficmaxred}
\end{remark}
In the following lemma we choose an ample representation of $X$ which can be compressed to an ample representation of $Z$. As pointed out in the previous remark, this can always be done.
\begin{lemma}
 Let $\Gamma$ act conveniently on $X$ and $Z \subset X$ be $\Gamma$-invariant and suppose that the action of $\Gamma$ on $Z$ is cocompact.
 Construct $\mathbb{R}(Z)^{\Gamma}$ by compressing the
 representation space of $C_0(X)$, so that $\mathbb{R}(Z)^{\Gamma}$ is naturally a $*$-subalgebra of $\mathbb{R}(X)^{\Gamma}$. Then an arbitrary
 non-degenerate $*$-representation of $\mathbb{R}(Z)^{\Gamma}$ on a Hilbert space can be extended to a non-degenerate $*$-representation of
 $\mathbb{R}(X)^{\Gamma}$. In particular, the inclusion $\reals(Z)^\Gamma \rightarrow \reals(X)^\Gamma$ extends to an injection $C^*_{\max}(Z)^\Gamma \rightarrow C^*_{\max}(X)^\Gamma$. 
 \label{relext}
\end{lemma}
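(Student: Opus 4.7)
The plan is to exploit the cocompactness of the $\Gamma$-action on $Z$ to realize $\mathbb{R}(Z)^{\Gamma}$ as a $\Gamma$-twisted tensor product $\mathbb{C}[\Gamma] \odot \mathcal{K}(H_0)$, with $\mathcal{K}(H_0)$ the compact operators on some Hilbert space $H_0$, whose $*$-representations are classified by commuting pairs of a unitary $\Gamma$-representation and a representation of $\mathcal{K}(H_0)$. I would then amplify the given representation to one of $\mathbb{R}(X)^{\Gamma}$ using the convenient fundamental-domain structure on $X$.

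First I would fix a fundamental domain $D_X$ for the $\Gamma$-action on $X$ (available by convenience). Since $Z$ is $\Gamma$-invariant and the action is free, $D_Z := D_X \cap Z$ is a fundamental domain for the action on $Z$; cocompactness forces $\overline{D_Z}$ to be compact. Under the $\Gamma$-equivariant identifications $H_X \cong \ell^2(\Gamma) \otimes \chi_{D_X} H_X$ and $H_Z \cong \ell^2(\Gamma) \otimes \chi_{D_Z} H_Z$ (with $\Gamma$ acting by the left regular representation on the first factor), every $T \in \mathbb{R}(Z)^{\Gamma}$ of propagation $R$ decomposes uniquely as a finite sum $T = \sum_{\gamma \in F_R} \rho(\gamma) \otimes T_\gamma$, where $\rho$ is the right regular representation, $F_R \subset \Gamma$ is finite by convenience, and the $T_\gamma$ are compact operators on $\chi_{D_Z} H_Z$ (local compactness upgrades to compactness because $\overline{D_Z}$ is compact). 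This yields a $*$-algebra isomorphism $\mathbb{R}(Z)^{\Gamma} \cong \mathbb{C}[\Gamma] \odot \mathcal{K}(\chi_{D_Z} H_Z)$; analogously, $\mathbb{R}(X)^{\Gamma}$ admits a similar algebraic description with $\mathcal{K}(\chi_{D_Z} H_Z)$ replaced by the $*$-algebra of locally compact, finite-propagation operators on $\chi_{D_X} H_X$.

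Given the non-degenerate $\sigma$, a short argument using this decomposition shows that its restriction to $1 \otimes \mathcal{K}(\chi_{D_Z} H_Z)$ is again non-degenerate: any $w$ orthogonal to $\sigma(\mathcal{K}(\chi_{D_Z} H_Z)) K$ satisfies $\sigma(\mathcal{K}(\chi_{D_Z} H_Z)) w = 0$, and the decomposition then yields $\sigma(\mathbb{R}(Z)^{\Gamma}) w = 0$, forcing $w = 0$. The structure theory of representations of the compact operators then furnishes $K \cong K_0 \otimes \chi_{D_Z} H_Z$ with $\mathcal{K}(\chi_{D_Z} H_Z)$ acting standardly on the second factor; commutation of $\rho(\gamma) \otimes 1$ with $\mathcal{K}(\chi_{D_Z} H_Z)$ forces the induced unitary $\Gamma$-representation to have the form $\pi_0 \otimes 1$ for some unitary $\pi_0 : \Gamma \to U(K_0)$. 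To extend $\sigma$ to $\mathbb{R}(X)^{\Gamma}$, set $\tilde K := K_0 \otimes \chi_{D_X} H_X$ (which contains $K$ via $\chi_{D_Z} H_Z \hookrightarrow \chi_{D_X} H_X$) and define $\tilde\sigma(\sum_\gamma \rho(\gamma) \otimes T_\gamma) := \sum_\gamma \pi_0(\gamma) \otimes T_\gamma$. Direct computation shows that $\tilde\sigma$ is a $*$-homomorphism, is non-degenerate (since the tautological action on $\chi_{D_X} H_X$ is), and restricts to $\sigma$ on $\mathbb{R}(Z)^{\Gamma}$.

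The main obstacle lies in carefully establishing the ``twisted tensor product'' isomorphism $\mathbb{R}(Z)^{\Gamma} \cong \mathbb{C}[\Gamma] \odot \mathcal{K}(\chi_{D_Z} H_Z)$ and its analogue for $X$, controlling the interplay between $\Gamma$-equivariance, finite propagation, and local compactness; in particular, convenience is used precisely to ensure that $F_R$ is finite for each fixed propagation bound and that $D_Z$ is genuinely a fundamental domain. Once these structural identifications are in place, the remaining representation-theoretic argument is essentially mechanical.
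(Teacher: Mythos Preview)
Your overall strategy coincides with the paper's: both reduce to the identification $\mathbb{R}(Z)^{\Gamma}\cong \mathbb{C}[\Gamma]\odot K(\chi_{D_Z}H_Z)$ coming from cocompactness, and both extend the given representation through the (finite-sum) embedding $\mathbb{R}(X)^{\Gamma}\hookrightarrow \mathbb{C}[\Gamma]\odot L(\chi_{D_X}H_X)$ provided by convenience. Where you diverge is in how the extension is produced. You classify the restriction of $\sigma$ to $K(\chi_{D_Z}H_Z)$ concretely as a multiple of the identity representation, read off a multiplicity space $K_0$ and a unitary $\Gamma$-representation $\pi_0$ on it, and then simply write $\tilde\sigma=\pi_0\otimes(\text{tautological})$ on $K_0\otimes\chi_{D_X}H_X$. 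The paper instead completes to $C^*_{\max}(\Gamma)\otimes K(\tilde H_Z)$, extends the representation to $C^*_{\max}(\Gamma)\otimes K(\tilde H_X)$ on a larger Hilbert space, factors it as commuting representations $\tilde\pi_1\otimes\tilde\pi_2$ via \cite{MUR}*{Theorem 6.3.5}, extends $\tilde\pi_2$ from $K(\tilde H_X)$ to $L(\tilde H_X)$ using \cite{DIX}*{Lemma 2.10.3}, and checks commutation via strong density and the double commutant theorem. Your route is more explicit and avoids the $C^*$-tensor machinery; the paper's route is more modular and never needs to name the multiplicity space.

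Two points in your write-up need tightening. First, the element $\rho(\gamma)\otimes 1$ does not lie in $\mathbb{R}(Z)^{\Gamma}$ (the identity is not compact), so you cannot literally apply $\sigma$ to it; you should instead build $\pi_0$ from the operators $\sigma(\gamma\otimes p)$ for finite-rank $p$, or first extend $\sigma$ to the multiplier algebra. Second, your claimed description of $\mathbb{R}(X)^{\Gamma}$ as $\mathbb{C}[\Gamma]$ tensored with ``locally compact, finite-propagation operators on $\chi_{D_X}H_X$'' is not correct---there is no such clean characterization of the image, since local compactness and propagation on $X$ do not translate componentwise under the $\Gamma$-twisting. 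Fortunately your argument does not actually need this: all you use (and all the paper claims) is the injection of $\mathbb{R}(X)^{\Gamma}$ into finite sums $\sum_{\gamma}\rho(\gamma)\otimes T_\gamma$ with $T_\gamma\in L(\chi_{D_X}H_X)$, and your $\tilde\sigma$ is in fact defined on all of $\mathbb{C}[\Gamma]\odot L(\chi_{D_X}H_X)$.
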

\begin{proof}
 Choose an ample representation $\rho\colon C_0(X) \rightarrow L(H_X)$ as above. By compressing the Hilbert space
 $H_X$ and restricting the representation, we obtain an ample
 representation of $C_0(Z)$, i.e.\
 $\rho|_{C_0(Z)}\colon C_0(Z) \rightarrow L(H_Z)$, where $H_Z$
 denotes the space $\chi_ZH_X$. Choose $D_Z \subset D_X$ fundamental domains
 of $Z$ and $X$ for the action of $\Gamma$. Similarly to the proof of
 \cite{HR}*{Lemma 12.5.3} one has
 $\mathbb{R}(Z)^\Gamma \cong \mathbb{C}[\Gamma] \odot K(\tilde{H}_Z)$, where $\tilde{H}_Z = \chi_{D_Z}H_Z$. The latter isomorphism is obtained using the
 isomorphisms $H_Z \cong \bigoplus_{\gamma \in \Gamma} \tilde{H}_Z \cong l^2(\Gamma) \otimes \tilde{H}_Z$. Denote by $\tilde{H}_X$ the Hilbert space
 $\chi_{D_X}H_X$. The isomorphism constructed in the proof can be extended to an injective map
 $\mathbb{C}[\Gamma] \odot L(\tilde{H}_X) \rightarrow L(H_X)$. The convenience of the action implies that its image contains the algebra
 $\mathbb{F}(X)^{\Gamma}$ of finite propagation $\Gamma$-invariant operators on $X$. This injection makes the diagram
 $$\begin{tikzcd}
    \mathbb{C}[\Gamma] \odot K(\tilde{H}_Z) \arrow{r}{\cong} \arrow[hook]{d} & \mathbb{R}(Z)^\Gamma \arrow[hook]{d}\\
    \mathbb{C}[\Gamma] \odot L(\tilde{H}_X) \arrow{r} & L(H_X)
 \end{tikzcd}$$
 commutative.
 We show that an arbitrary non-degenerate $*$-representation of $\mathbb{C}[\Gamma] \odot K(\tilde{H}_Z)$ on a Hilbert space $H_0$ can be extended to a
 non-degenerate $*$-representation of $\mathbb{C}[\Gamma] \odot L(\tilde{H}_X)$. This implies the
 lemma since $\mathbb{R}(X)^\Gamma \subset \mathbb{F}(X)^\Gamma$.
 Suppose that $\pi\colon \mathbb{C}[\Gamma] \odot K(\tilde{H}_Z) \rightarrow L(H_0)$ is a non-degenerate $*$-representation of $\mathbb{C}[\Gamma] \odot K(\tilde{H}_Z)$ on a
 Hilbert space $H_0$. The representation $\pi$ extends to a representation of $C^*_{\maxi}(\Gamma) \otimes K(\tilde{H}_Z)$ which we denote by $\pi$. Note that
 since the $C^*$-algebra of compact operators is nuclear, the $C^*$-algebra tensor product above is unique. $C^*_{\maxi}(\Gamma) \otimes K(\tilde{H}_Z)$ is a
 $C^*$-subalgebra of $C^*_{\maxi}(\Gamma) \otimes K(\tilde{H}_X)$ and $\pi$ can thus be extended to a non-degenerate representation of
 $C^*_{\maxi}(\Gamma) \otimes K(\tilde{H}_X)$ on a possibly bigger Hilbert space $H$, which we denote by $\tilde{\pi}$. From \cite{MUR}*{Theorem 6.3.5}, it follows
 that there exist unique non-degenerate representations $\tilde{\pi}_1$ and $\tilde{\pi}_2$ of $C^*_{\maxi}(\Gamma)$ and $K(\tilde{H}_X)$ on $H$ respectively, such
 that $\tilde{\pi}(a \otimes b) = \tilde{\pi}_1(a)\tilde{\pi}_2(b) = \tilde{\pi}_2(b)\tilde{\pi}_1(a)$ for all $(a,b) \in C^*_{\maxi}(\Gamma) \times K(\tilde{H}_X)$.
 The representation $\tilde{\pi}_2$ can be extended to a representation $\hat{\pi}_2$ of $L(\tilde{H}_X)$ on $H$ by \cite{DIX}*{Lemma 2.10.3} and from the same lemma
 it follows that $\tilde{\pi}_2(K(\tilde{H}_X))$ is strongly dense in $\hat{\pi}_2(L(\tilde{H}_X))$. From the double commutant theorem, it follows that the commutant of a
 $C^*$-subalgebra of $L(H)$ is strongly closed. This in turn implies that $\tilde{\pi}_1(a)\hat{\pi}_2(b) = \hat{\pi}_2(b)\tilde{\pi}_1(a)$ for
 $(a,b) \in C^*_{\max}(\Gamma) \times L(\tilde{H}_X)$. Now restrict $\tilde{\pi}_1$ to $\mathbb{C}[\Gamma]$. From \cite{MUR}*{Remark 6.3.2}, it follows that there is
 a unique $*$-representation $\hat{\pi}\colon \mathbb{C}[\Gamma] \odot L(\tilde{H}_X) \rightarrow L(H)$ with the property
 $\hat{\pi}(a \otimes b) = \tilde{\pi}_1(a)\hat{\pi}(b)$. It is clear that $\hat{\pi}$ is an extension of $\pi$.
\end{proof}

\begin{proof}[Proof of Theorem \ref{theo:abs_vers_rel}]
 The proof is analogous to that of \cite{HRY}*{Lemma 5.1}. As in Lemma~\ref{relext}, construct the algebras $C^*(\overline{B_n(Y)})^\Gamma$ by
 compressing the representation space of $C_0(X)$. The inclusions
 $\mathbb{R}(\overline{B_r(Y)})^{\Gamma} \rightarrow
 \mathbb{R}(\overline{B_R(Y)})^{\Gamma}$ for $r\le R$
 induce maps $C^*_{\maxi}(\overline{B_r(Y)})^\Gamma \rightarrow
 C^*_{\maxi}(\overline{B_R(Y)})^\Gamma$. We will show that $\varinjlim C^*_{\maxi}(\overline{B_r(Y)})^\Gamma = C^*_{\maxi}(Y \subset X)^\Gamma$. Let $A$ be a $C^*$-algebra and let 
 $\phi_r\colon C^*_{\maxi}(\overline{B_r(Y)})^\Gamma \rightarrow A$ be $C^*$-algebra morphisms such that all the diagrams of the form
 $$\begin{tikzcd}
    C^*_{\maxi}(\overline{B_r(Y)})^\Gamma \arrow{r} \arrow{d} &
    C^*_{\maxi}(\overline{B_R(Y)})^\Gamma \arrow{ld}\\ 
    A
 \end{tikzcd}$$
 with $r < R$ commute. The above compatibility condition implies the existence of a unique morphism of $*$-algebras
 $\phi: \mathbb{R}(Y\subset X)^\Gamma \rightarrow A$, such that all the diagrams 
 $$\begin{tikzcd}
    \mathbb{R}(\overline{B_r(Y)})^\Gamma \arrow{r} \arrow{d} & \mathbb{R}(Y\subset X)^\Gamma \arrow{ld}\\
    A
 \end{tikzcd}$$
 are commutative. Lemma~\ref{relext} then implies that the map $\phi$ is continuous if $\mathbb{R}(Y\subset X)^\Gamma$
 is endowed with the norm of $C^*(X)^\Gamma$. To see this note that Lemma~\ref{relext} implies that for $a \in \reals(\overline{B_r(Y)})$, $||a||_{C^*_{\max}(\overline{B_r(Y)})^\Gamma} = ||a||_{C^*_{\max}(X)^\Gamma}$. Hence, $||\phi(a)|| = ||\phi_r(a)|| \leq ||a||_{C^*_{\max}(\overline{B_r(Y)})^\Gamma} = ||a||_{C^*_{\max}(X)^\Gamma}$. Thus, $\phi$ can be extended uniquely to a morphism $C^*(Y \subset X)^\Gamma \rightarrow A$ of $C^*$-algebras.
 The universal property of the direct limit of $C^*$-algebras, implies that
 $\varinjlim C^*_{\maxi}(\overline{B_r(Y)})^\Gamma = C^*_{\maxi}(Y \subset X)^\Gamma$. The
 claim of the theorem then follows from the continuity of $K$-theory and the coarse equivalence of $\overline{B_r(Y)}$ and $\overline{B_R(Y)}$ for arbitrary
 $r,R \in \mathbb{N}$ (recall that the $K$-theory groups of the Roe algebras of coarsely equivalent spaces are isomorphic).
\end{proof}
\subsection{The Structure Algebra and Paschke Duality}
Let $X$ be as in the previous section. A representation $\rho \colon C_0(X) \rightarrow L(H)$ of $C_0(X)$ is called very ample if it is an infinite sum of
copies of an ample representation. Construct $\mathbb{R}(X)^\Gamma$ and $C^*(X)^\Gamma$ using some very ample representation. In this section we will define
a $C^*$-algebra associated to $X$ which contains $C^*_{\maxi}(X)^\Gamma$ as an
ideal and such that the $K$-theory of the quotient provides a model for
$K$-homology of $X$. 
\begin{definition}
We recall that an operator $T \in L(H)$ is called pseudolocal if it commutes with the image of $\rho$ up to compact operators; i.e., $[f,T] \in K(H)$ for all $f \in C_0(X)$. 
\end{definition}

\begin{definition}
 Denote by $\mathbb{S}_\rho(X)^{\Gamma}$ the $*$-algebra of finite propagation, pseudolocal operators in $L(H)$ which are furthermore invariant under the
 action of the group $\Gamma$. The maximal structure algebra associated to the space $X$ is the maximal $C^*$-completion of $\mathbb{S}_\rho(X)^{\Gamma}$.
 It will be denoted by $D^*_{\rho,\maxi}(X)^\Gamma$. The reduced structure algebra is the completion of the latter $*$-algebra using the norm in $L(H)$.
 We denote this algebra by $D^*_{\rho,\red}(X)^\Gamma$.
\end{definition}

\begin{remark}
From now on, we will drop $\rho$ from our notation. Later we will introduce a new completion of
$\mathbb{S}(X)^\Gamma$, which sits between the reduced and maximal completions and denote it by $D^*_q(X)^\Gamma$. If the action of $\Gamma$
is trivial, we denote the structure algebra by $D^*_{d}(X)$, where $d$ stands for the chosen completion. 
\end{remark}

  In comparison to the well known $D^*_{\red}(X)^\Gamma$, the definition and
  properties of the maximal structure algebra
  $D^*_{\maxi}(X)^\Gamma$ are trickier than one might think in the first place. First of all, one has to
  establish its existence; i.e.\,an upper bound on the $C^*$-norms. Secondly,
  we want that $C^*_{\maxi}(X)^\Gamma$ is an ideal in $D^*_{\maxi}(X)^\Gamma$ and for this
  one has to control the a priori different $C^*$-representations which are
  used in the definitions. Only then does it make sense to form
  $D^*_{\maxi}(X)/C^*_{\maxi}(X)$. Paschke duality states that its K-theory is canonically
  isomorphic to the locally finite K-homology of $X$. All of this will be done in the remainder of this
  section. 
We now introduce the so-called dual
algebras, which are larger counterparts of the Roe and structure algebra.
\begin{definition}
 Denote by $\mathfrak{C}^*(X)^{\Gamma}$ the $C^*$-algebra of $\Gamma$-invariant locally compact operators in $L(H)$. Denote by $\mathfrak{D}^*(X)^{\Gamma}$
 the $C^*$-algebra of $\Gamma$-invariant pseudolocal operators in $L(H)$. 
\end{definition}
It is clear that $\mathfrak{C}^*(X)^{\Gamma}$ is an ideal of $\mathfrak{D}^*(X)^{\Gamma}$. We have the following
\begin{theorem}
 There is an isomorphism $K_{*+1}(\frac{\mathfrak{D}^*(X)}{\mathfrak{C}^*(X)})
 \cong K^{lf}_*(X)$, where the right-hand side is the locally finite
 $K$-homology of $X$, given as the Kasparov group $KK_*(C_0(X),\mathbb{C})$.
\end{theorem}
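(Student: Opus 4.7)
The plan is to recognize this statement as a direct instance of the classical Paschke duality theorem. Crucially, the algebras $\mathfrak{D}^*(X)$ and $\mathfrak{C}^*(X)$ are defined \emph{without} any finite propagation requirement: $\mathfrak{C}^*(X)$ consists of all locally compact operators and $\mathfrak{D}^*(X)$ of all pseudolocal operators on the very ample representation space $H$. This is precisely the setting in which Paschke's original argument applies (cf.\ Higson--Roe, \emph{Analytic $K$-Homology}, Chapter~5), so the hard analytic work has already been done for us.

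First I would verify that the very ample representation $\rho\colon C_0(X) \to L(H)$ underlying the construction satisfies the hypotheses of Paschke duality, namely that it is nondegenerate and absorbing. Both are immediate from the assumption of very ampleness, since an infinite sum of copies of an ample representation absorbs every representation by Voiculescu's theorem. Under this assumption, the algebras $\mathfrak{D}^*(X)$ and $\mathfrak{C}^*(X)$ coincide literally with the pseudolocal and locally compact parts of the Paschke dual of $C_0(X)$.

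The Paschke duality theorem then yields a natural isomorphism
\begin{equation*}
K_{*+1}\left(\frac{\mathfrak{D}^*(X)}{\mathfrak{C}^*(X)}\right) \cong KK_*(C_0(X),\mathbb{C}),
\end{equation*}
and the right-hand side is, by definition in the statement of the theorem, $K^{lf}_*(X)$. Concretely one builds the duality map by sending a class in the quotient, represented by a self-adjoint pseudolocal $F$ with $F^2 - 1 \in \mathfrak{C}^*(X)$, to the Kasparov module $(H, \rho, F)$; the pseudolocality condition is exactly what makes $[F,\rho(f)]$ compact, i.e.\ what makes this a cycle for $KK_*(C_0(X),\mathbb{C})$.

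The main obstacle I anticipate is not constructing the map but verifying it is an isomorphism; this is the content of Paschke's theorem and is delegated to the reference. A secondary issue is the dependence on the chosen very ample representation $H$: one must observe that any two such representations are unitarily equivalent after stabilisation, so conjugation by the absorbing isometry induces the canonical identification of the quotient $C^*$-algebras and hence of their $K$-theory, in the same spirit as Proposition~\ref{prop:Roe_alg_well_def}. Once this naturality is in place, the isomorphism is canonical and the theorem follows.
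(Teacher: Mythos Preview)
Your proposal is correct and takes essentially the same approach as the paper: both recognize the statement as the classical Paschke duality theorem and defer to a standard reference (the paper cites \cite{PS}*{Proposition 3.4.11}, you cite Higson--Roe, Chapter~5). Your additional remarks on why very ampleness yields an absorbing representation and on independence of the representation are accurate elaborations, but the substance is identical.
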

\begin{proof}
 This is proven in \cite{PS}*{Proposition 3.4.11}.
\end{proof}

\begin{lemma} \label{calg}
 The map $\frac{\mathbb{S}(X)}{\mathbb{R}(X)} \rightarrow \frac{\mathfrak{D}^*(X)}{\mathfrak{C}^*(X)}$ induced by the
 inclusion $\mathbb{S}(X) \rightarrow \mathfrak{D}^*(X)$ is an isomorphism. In
 particular,
 $\frac{\mathbb{S}(X)}{\mathbb{R}(X)}$ is a $C^*$-algebra. The corresponding
 statement holds for the $\Gamma$-equivariant versions.
\end{lemma}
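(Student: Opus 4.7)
The map in question descends to quotients because $\mathbb{R}(X)^\Gamma \subset \mathfrak{C}^*(X)^\Gamma$ (finite-propagation locally compact operators are in particular locally compact) and $\mathbb{S}(X)^\Gamma \subset \mathfrak{D}^*(X)^\Gamma$; the plan is to verify injectivity and surjectivity separately. Injectivity is essentially immediate: if $T \in \mathbb{S}(X)^\Gamma$ maps to zero in the larger quotient, then $T \in \mathfrak{C}^*(X)^\Gamma$ is locally compact, and combined with its inherent finite propagation this forces $T \in \mathbb{R}(X)^\Gamma$. Once the algebraic isomorphism is established, the ``in particular'' assertion follows by transporting the $C^*$-structure on $\mathfrak{D}^*(X)^\Gamma/\mathfrak{C}^*(X)^\Gamma$ across it.

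For surjectivity, given $T \in \mathfrak{D}^*(X)^\Gamma$, I will construct $T' \in \mathbb{S}(X)^\Gamma$ with $T - T' \in \mathfrak{C}^*(X)^\Gamma$ via the classical partition-of-unity trick. Using the bounded geometry of $X$ together with the freeness and properness of the action, pick a $\Gamma$-invariant, locally finite open cover $\{U_i\}$ of $X$ by sets of uniformly bounded diameter $r$ and uniformly bounded multiplicity $N$, together with a $\Gamma$-invariant subordinate partition $\{\phi_i\} \subset C_c(X)$ satisfying $\sum_i \phi_i^2 = 1$ (obtained by carrying out the construction on a fundamental domain and transporting $\Gamma$-equivariantly). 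Set
\begin{equation*}
T' := \sum_i \phi_i\, T\, \phi_i,
\end{equation*}
interpreted as a strongly convergent sum. A coloring argument splitting the index set into $N$ classes of pairwise disjoint supports yields $\|T'\| \le N\|T\|$, and each summand has propagation at most $r$, so $T'$ is a bounded $\Gamma$-equivariant operator of propagation bounded by $r$.

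It remains to check that $T' \in \mathbb{S}(X)^\Gamma$ and $T - T' \in \mathfrak{C}^*(X)^\Gamma$. From $\sum_i \phi_i^2 = 1$ one computes
\begin{equation*}
T - T' \;=\; \sum_i \phi_i\bigl(\phi_i T - T \phi_i\bigr) \;=\; \sum_i \phi_i\,[\phi_i, T].
\end{equation*}
For any $f \in C_c(X)$, local finiteness of $\{\phi_i\}$ reduces $f(T - T')$ to a finite sum of terms of the form $f \phi_i [\phi_i, T]$, each compact by pseudolocality of $T$; a symmetric argument handles $(T - T')f$. Hence $T - T' \in \mathfrak{C}^*(X)^\Gamma$. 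Pseudolocality of $T'$ then follows from $[T', f] = [T, f] - [T - T', f]$, since the first term is compact by pseudolocality of $T$ and the second because $T - T'$ is locally compact. The main technical obstacle is the construction of an equivariant partition of unity with the stated uniform diameter and multiplicity bounds, but this is routine given the bounded geometry assumption and the hypotheses on the $\Gamma$-action; everything else is essentially algebraic book-keeping.
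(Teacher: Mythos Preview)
Your proof is correct and follows essentially the same route as the paper's: the paper invokes the truncation argument of \cite{HR}*{Lemma 12.3.2} to deduce $\mathfrak{D}^*(X)=\mathbb{S}(X)+\mathfrak{C}^*(X)$ and then remarks that an invariant cover and partition of unity yield the equivariant version, which is precisely the construction you spell out in detail. Your explicit verification that $T-T'\in\mathfrak{C}^*(X)^\Gamma$ and $T'\in\mathbb{S}(X)^\Gamma$ is exactly the content of that truncation argument.
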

\begin{proof}
 In \cite{HR}*{Lemma 12.3.2}, the isomorphism $\frac{D^*_{\red}(X)}{C^*_{\red}(X)} \cong \frac{\mathfrak{D}^*(X)}{\mathfrak{C}^*(X)}$ is proven.
 The truncation argument used in the proof shows that $\mathfrak{D}^*(X) = \mathbb{S}(X) + \mathfrak{C}^*(X)$, which implies the surjectivity of the map
 $\frac{\mathbb{S}(X)}{\mathbb{R}(X)} \rightarrow \frac{\mathfrak{D}^*(X)}{\mathfrak{C}^*(X)}$. Injectivity is clear. An analogous argument using a suitable
 invariant open covering and partition of unity gives the isomorphism
 $\frac{\mathbb{S}(X)^{\Gamma}}{\mathbb{R}(X)^{\Gamma}} \cong \frac{\mathfrak{D}^*(X)^{\Gamma}}{\mathfrak{C}^*(X)^{\Gamma}}.$
\end{proof}

\begin{proposition}
 For $a \in \mathbb{S}(X)^\Gamma$ there exists $C_a > 0$ such that, for an arbitrary non-degenerate representation $\pi$ of $\mathbb{S}(X)^\Gamma$ we have
 $||\pi(a)|| \leq C_a$.
 \label{strbnd}
\end{proposition}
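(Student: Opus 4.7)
The plan is to reduce the bound on $\|\pi(a)\|$ to the corresponding bound for the Roe algebra (Proposition \ref{prop:max_exists}) combined with a bound coming from the fact that multiplication operators lie in a commutative $C^*$-subalgebra of $\mathbb{S}(X)^\Gamma$, where every $*$-representation is automatically norm-decreasing.

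First I would use the bounded geometry of $X$ and the properness of the $\Gamma$-action to fix a $\Gamma$-equivariant partition of unity $\{\phi_n\}_{n\in I}$ with $\sum_n \phi_n^2 = 1$, subordinate to a uniformly locally finite cover of $X$ by balls of a fixed small radius. Using $[\phi_n,\phi_m]=0$ together with $\sum_n\phi_n^2=1$, I would decompose
\begin{equation*}
 a \;=\; \sum_n \phi_n a \phi_n \;+\; \sum_n \phi_n[\phi_n,a].
\end{equation*}
Both sums are $\Gamma$-invariant by equivariance of the partition, and are well-defined pointwise because the finite propagation of $a$ ensures that only finitely many summands are nonzero on any bounded subset of $X$.

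Next I would establish:
\begin{itemize}
\item The second sum $a_{\mathrm{off}} := \sum_n \phi_n[\phi_n,a]$ belongs to $\mathbb{R}(X)^\Gamma$; this uses that each $[\phi_n,a]$ is compact by pseudolocality of $a$, making the locally finite sum locally compact, with propagation bounded in terms of $\prop(a)$ and the support radius of the $\phi_n$.
\item The first sum $a_{\mathrm{diag}} := \sum_n \phi_n a \phi_n$ lies in $\mathbb{S}(X)^\Gamma$, since $[a_{\mathrm{diag}},f] = \sum_n \phi_n[a,f]\phi_n$ is compact for $f\in C_0(X)$; its propagation is bounded by twice the support radius of the $\phi_n$, hence by an absolute constant independent of $\prop(a)$.
\end{itemize}

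Proposition \ref{prop:max_exists} then immediately gives a uniform bound $\|\pi(a_{\mathrm{off}})\| \leq C\cdot \|a_{\mathrm{off}}\|_{C^*_{\red}(X)^\Gamma}$ that depends only on $a$. To bound $\pi(a_{\mathrm{diag}})$ I would iterate the same decomposition, refining the partition of unity at each step so that the propagation of the remaining diagonal piece is strictly reduced. In the limit, the residual lies in the commutative $C^*$-subalgebra of $\mathbb{S}(X)^\Gamma$ consisting of multiplications by $\Gamma$-invariant bounded Borel functions, where $\|\pi(\cdot)\|\le \|\cdot\|_{L^\infty}$ by Gelfand duality.

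The main obstacle is to make this iteration rigorous: one must ensure the accumulated Roe-algebra error terms assemble into a norm-convergent expression whose bound depends only on $a$, and that the limiting ``propagation-zero'' contribution is well-defined in every representation. This requires careful control of how the constants $C_R$ of Proposition \ref{prop:max_exists} grow as the partition refines, and crucially exploits the uniformity afforded by bounded geometry of $X$ together with the equivariance of the partition.
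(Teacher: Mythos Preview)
Your approach has two genuine gaps. First, the iteration is circular. After $n$ steps you have $a = \sum_{k=1}^n e_k + a_n$ with each $e_k \in \mathbb{R}(X)^\Gamma$ and $a_n$ of shrinking propagation. Proposition~\ref{prop:max_exists} bounds each $\|\pi(e_k)\|$, but the constants $C_R$ there are all at least $1$ and the reduced norms $\|e_k\|_{\red}$ need not decay (the estimate $\|\sum_n \phi_n T\phi_n\|\le\|T\|$ only gives $\|e_k\|_{\red}\le 2\|a\|_{\red}$), so $\sum_k\|\pi(e_k)\|$ diverges. To pass to a limit you would need $\pi(a_n)$ to converge, but $\pi$ is an \emph{arbitrary} $*$-representation, not a priori continuous for any norm on $\mathbb{S}(X)^\Gamma$; that continuity is exactly what you are trying to prove. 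Second, a zero-propagation residual is \emph{not} a multiplication operator: on the standard module $l^2(\Gamma_X)\otimes H$ the zero-propagation operators are the elements of $l^\infty(\Gamma_X;L(H))^\Gamma$, which is far from commutative.

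The paper sidesteps both issues by decomposing the \emph{representation} rather than the element. Any non-degenerate $\pi$ splits as $\pi_1\oplus\pi_2$ with $\pi_1|_{\mathbb{R}(X)^\Gamma}$ non-degenerate and $\pi_2$ vanishing on $\mathbb{R}(X)^\Gamma$ (Lemma~\ref{decomp}). Then $\pi_1$ is the unique extension of a Roe-algebra representation (Lemma~\ref{absext}) and is bounded by the maximal Roe norm, while $\pi_2$ factors through $\mathbb{S}(X)^\Gamma/\mathbb{R}(X)^\Gamma$, which is shown to already be a $C^*$-algebra, isomorphic to $\mathfrak{D}^*(X)^\Gamma/\mathfrak{C}^*(X)^\Gamma$ (Lemma~\ref{calg}). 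Your partition-of-unity trick is in fact precisely the argument that proves Lemma~\ref{calg} (one step, no iteration: it shows $\mathfrak{D}^* = \mathbb{S} + \mathfrak{C}^*$), but the payoff is at the level of the quotient algebra, not of the individual element $a$.
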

We need a few lemmas before proving Proposition~\ref{strbnd}. This proposition shows that the maximal structure algebra is well-defined. Since the structure algebra depends on both
 the coarse and topological structure of the space, the coarse geometric property of having bounded geometry alone does not guarantee the existence of the
 maximal structure algebra. This is where the properness of the metric is needed. More precisely, this is used in Lemma~\ref{calg}, which is itself used in
 the proof of Proposition~\ref{strbnd}.
\begin{lemma}
There exists a $C^*$-algebra $A \subset \mathbb{R}(X)^\Gamma$ which contains an approximate identity for
$C^*_{\maxi}(X)^\Gamma$. 
\end{lemma}
\begin{proof}
 Let $D$ be a fundamental domain for the action of $\Gamma$ on $X$. Choose a discrete subset $Y_D$ of $D$ as provided by the bounded geometry condition.
 Denote the set obtained by transporting $Y_D$ by the action of $\Gamma$ by
 $Y$. $Y$ is then clearly $\Gamma$-invariant. By \cite{EM}*{Proposition 2.7}, extended straightforwardly to the equivariant case, it suffices to
 show that there exists a $C^*$-algebra $B \subset \mathbb{R}(Y)^{\Gamma}$ which contains an approximate identity for $C^*_{\maxi}(Y)^\Gamma$. Here, as the representation space we
 choose $l^2(Y) \otimes l^2(\mathbb{N})$, where the action of $C_0(Y)$ is
 given by multiplication. By \cite{EM}*{Proposition 2.19}, $l^{\infty}(Y;C_0(\mathbb{N}))^{\Gamma} \subset \mathbb{R}(Y)^{\Gamma}$
 is a $C^*$-algebra which contains an approximate unit of $\mathbb{R}(Y)$ endowed with the reduced norm and, by Proposition~\ref{max_well_def}, of $\mathbb{R}(Y)$ endowed
 with the maximal norm. The claim then follows from density of
 $\mathbb{R}(Y)^{\Gamma}$ in $C^*(Y)^{\Gamma}$.
\end{proof}

\begin{lemma} \label{absext}
 Let $\rho$ be an arbitrary non-degenerate $*$-representation of
 $\mathbb{R}(X)^\Gamma$ on 
 some Hilbert space $H$. It  extends in a unique way to a $*$-representation
 of $\mathbb{S}(X)^\Gamma$ on $H$.

 More generally, let $\mathbb{M}(X)^\Gamma$ be the algebra of bounded
 multipliers of $\mathbb{R}(X)^\Gamma$, i.e.~all bounded operators on the
 defining Hilbert space which preserve $\mathbb{R}(X)^\Gamma$ by left and
 right multiplication. Note that $\mathbb{M}(X)^\Gamma$ contains $\mathbb{S}(X)^\Gamma$. The representation $\rho$ extends in a unique
 way to a $*$-representation of $\mathbb{M}(X)^\Gamma$.
\end{lemma}
\begin{proof}
 Let $\pi\colon \mathbb{R}(X)^{\Gamma} \rightarrow L(H)$ be a non-degenerate $*$-representation of $\mathbb{R}(X)^{\Gamma}$.
 It extends to a non-degenerate
 representation of $C^*_{\maxi}(X)^{\Gamma}$. Pick a $C^*$-subalgebra $A$ of $C^*_{\maxi}(X)^{\Gamma}$ which contains an approximate identity for
 $C^*_{\maxi}(X)^{\Gamma}$ and sits inside $\mathbb{R}(X)^{\Gamma}$. The restriction of $\pi$ to $A$ is thus also non-degenerate.
 It follows from the Cohen-Hewitt factorisation theorem (\cite{Hewitt1964}*{Theorem 2.5}) that, for all
 $w \in H$, there exist $T \in A$ and $v \in H$ with $\pi(T)v = w$. Furthermore, $\pi(S)v = 0$ for all $S \in \mathbb{R}(X)^{\Gamma}$ implies that $v$ is in the orthogonal complement of $\pi(\reals(X))H$; hence, $v=0$ by the nondegeneracy of $\pi$. It follows from \cite{FD}*{Proposition IV.3.18} that $\hat{\pi}(T)(\pi(S)v) \coloneqq \pi(TS)v$ for
 $T \in \mathbb{S}(X)^{\Gamma}$ gives a well-defined algebraic representation
 $\hat{\pi}\colon \mathbb{M}(X)^{\Gamma} \rightarrow \mathbb{L}(H)$. Here $\mathbb{L}(H)$ denotes the vector space of
 linear maps on $H$. It is clear that $\hat{\pi}$ is an extension of $\pi$. We show that $\hat{\pi}$ is actually a $*$-representation of
 $\mathbb{M}(X)^{\Gamma}$. The equalities
 $$\langle \hat{\pi}(T)(\pi(S)v), \pi(S^{\prime})v^{\prime}\rangle = \langle \pi(TS)v, \pi(S^{\prime})v^{\prime}\rangle =
 \langle \pi((S^*T^*)^*)v, \pi(S^{\prime})v^{\prime}\rangle$$
 $$= \langle v, \pi(S^*T^*S^{\prime})v^{\prime}\rangle =
 \langle \pi(S)v, \pi(T^*S^{\prime})v^{\prime}\rangle = \langle \pi(S)v, \hat{\pi}(T^*)(\pi(S^{\prime})v^{\prime})\rangle$$
 imply that the operator $\hat{\pi}(T)$ is formally self-adjoint if $T$ is self-adjoint. Furthermore, since $\hat{\pi}(T)$ is defined everywhere on $H$, it
 follows from the Hellinger-Toeplitz theorem that it is bounded. Since every element of a $*$-algebra is a linear combination of self-adjoint elements, this
 implies that the image of $\hat{\pi}$ is actually contained in $L(H)$. The previous computation then shows that $\hat{\pi}$ respects the involution; thus,
 it is a $*$-representation. Uniqueness of the extension follows from the fact that every extension $\hat{\pi}$ of $\pi$ has
 to satisfy $\hat{\pi}(T)(\pi(S)v) = \pi(TS)v$ for $T \in \mathbb{M}(X)^{\Gamma}$ and $S \in \mathbb{R}(X)^{\Gamma}$, but this determines
 $\hat{\pi}$ since all elements of $H$ are of the form $\pi(S)v$ for some $S \in \mathbb{R}(X)^{\Gamma}$ and $v \in H$.
\end{proof}

\begin{lemma}
 An arbitrary non-degenerate $*$-representation $\pi$ of $\mathbb{S}(X)^{\Gamma}$ can be decomposed as $\pi = \pi_1 \oplus \pi_2$, where both $\pi_1$ and its
 restriction to $\mathbb{R}(X)^{\Gamma}$ are non-degenerate representations on some Hilbert space $H_1$ and $\pi_2$ is a non-degenerate representation of
 $\mathbb{S}(X)^{\Gamma}$ vanishing on $\mathbb{R}(X)^{\Gamma}$.
 \label{decomp}
\end{lemma}
\begin{proof}
 This follows from Lemma~\ref{absext} and the discussion prior to \cite{AR}*{Theorem 1.3.4}.
\end{proof}

\begin{proof}[Proof of Proposition~\ref{strbnd}]
 We denote by $S$ the set of cyclic representations of $\mathbb{S}(X)^{\Gamma}$ on some Hilbert space with the property that their restriction to
 $\mathbb{R}(X)^{\Gamma}$ is a non-degenerate representation of $\mathbb{R}(X)^{\Gamma}$ on the same space. For $\pi \in S$, denote by $\pi_{\mathbb{R}}$ its
 restriction to $\mathbb{R}(X)^{\Gamma}$. The bounded geometry condition on $X$ (see Proposition \ref{prop:max_exists}) implies that $\bigoplus_{\pi \in S}\pi_{\mathbb{R}}$ is a well-defined
 non-degenerate representation of $\mathbb{R}(X)^{\Gamma}$. Lemma~\ref{absext} implies that $\Pi = \bigoplus_{\pi \in S}\pi$ is a well-defined Hilbert space
 representation of $\mathbb{S}(X)^{\Gamma}$. For $a\in \mathbb{S}(X)^{\Gamma}$
 set $C_1^a = ||\Pi(a)||$. It is shown in Lemma~\ref{calg} that
 $\frac{\mathbb{S}(X)^{\Gamma}}{\mathbb{R}(X)^{\Gamma}}$ is a $C^*$-algebra. Set $C_2^a = ||[a]||_{\frac{\mathbb{S}(X)^{\Gamma}}{\mathbb{R}(X)^{\Gamma}}}$
 and $C_a = \maxi \{C_1^a,C_2^a\}$. Now let $\pi$ be an arbitrary non-degenerate representation of $\mathbb{S}(X)^{\Gamma}$ with a decomposition
 $\pi_1 \oplus \pi_2$ as provided by Lemma~\ref{decomp}. Obviously $||\pi(a)|| \leq \maxi \{||\pi_1(a)||, ||\pi_2(a)||\}$. The claim now follows from the
 facts that $\pi_1$ is a subrepresentation of $\Pi$ and $\pi_2$ factors through $\frac{\mathbb{S}(X)^{\Gamma}}{\mathbb{R}(X)^{\Gamma}}$.
\end{proof}

\begin{proposition}
 As with the Roe algebra, the $K$-theory groups of the structure algebra are independent of the choice of the very ample representation. Furthermore, the
 assignment $X \mapsto K_*(D^*_{\maxi}(X)^\Gamma)$ is functorial for uniform
 (i.e.~coarse and continuous) maps. 
\end{proposition}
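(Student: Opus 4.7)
The plan is to follow the same template as the proof of Proposition \ref{prop:Roe_alg_well_def} and the subsequent functoriality statement for the Roe algebra, reducing everything to the standard covering isometry machinery of \cite{HR}*{Section 6.3}, and then invoking the universal property of the maximal completion.

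For independence of the very ample representation, let $\rho_1 \colon C_0(X) \to L(H_1)$ and $\rho_2 \colon C_0(X) \to L(H_2)$ be two very ample covariant representations. The construction in \cite{HR}*{Section 6.3}, extended equivariantly as in the Roe algebra case, produces a $\Gamma$-equivariant isometry $V \colon H_1 \to H_2$ that covers the identity, meaning $V\rho_1(f) - \rho_2(f) V$ is compact for every $f \in C_0(X)$. The key observation is that conjugation by such a $V$ sends $\mathbb{S}_{\rho_1}(X)^\Gamma$ into $\mathbb{S}_{\rho_2}(X)^\Gamma$: finite propagation is preserved because $V$ intertwines the $C_0(X)$-representations up to compact error, and pseudolocality is preserved because compact perturbations of pseudolocal operators are pseudolocal. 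Hence $T \mapsto V T V^*$ is a well-defined $*$-homomorphism of the algebraic structure algebras.

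Now comes the step that uses the maximal completion essentially: since $D^*_{\maxi}(X)^\Gamma$ is by definition the enveloping $C^*$-algebra of $\mathbb{S}(X)^\Gamma$, and this enveloping $C^*$-algebra exists by Proposition \ref{strbnd}, any $*$-homomorphism between algebraic structure algebras extends uniquely and continuously to their maximal completions by the universal property. Applying this to conjugation by $V$ yields a $*$-homomorphism $D^*_{\maxi,\rho_1}(X)^\Gamma \to D^*_{\maxi,\rho_2}(X)^\Gamma$ and hence a map on K-theory. The canonicity of the induced map in $K$-theory then follows exactly as in Proposition \ref{prop:Roe_alg_well_def}: any two covering isometries $V, V'$ become conjugate via a unitary after passing to $H_2 \oplus H_2$ (again very ample), and conjugation by a unitary induces the identity on $K$-theory.

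For functoriality under a uniform map $f \colon X \to Y$, one fixes very ample covariant representations $\rho_X$ and $\rho_Y$ and builds, following \cite{HR}*{Chapter 6}, a $\Gamma$-equivariant isometry $V_f \colon H_X \to H_Y$ covering $f$, i.e.\ satisfying $V_f \rho_X(g \circ f) - \rho_Y(g) V_f \in K(H_X, H_Y)$ for $g \in C_0(Y)$; here continuity of $f$ is needed so that $g \circ f$ lies in $C_b(X)$ and the compact approximation makes sense, while coarseness of $f$ is needed so that conjugation by $V_f$ preserves finite propagation. Conjugation by $V_f$ is then a $*$-homomorphism of algebraic structure algebras, extending to the maximal completions by the same universal property, and independent of choices in $K$-theory by the same conjugation-by-unitary argument. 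Composition of uniform maps is handled by comparing covering isometries for the composite with compositions of covering isometries, again up to stabilization and unitary conjugation.

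The only real obstacle is the step where one checks that conjugation $T \mapsto V T V^*$ actually lands in $\mathbb{S}_{\rho_2}(X)^\Gamma$, rather than merely in a compact perturbation of it; this requires that $V$ covers the identity (resp.\ $f$) in the precise sense above and is where the choices of \emph{very} ample representations (allowing oneself to absorb multiplicities) is used. Once this is in place, the passage to the maximal completion is formal because, in contrast to the Roe algebra situation, here we do not need any bounded-geometry norm estimate—the existence of the maximal norm, established in Proposition \ref{strbnd}, already does all the work.
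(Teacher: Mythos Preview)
Your approach is correct and is exactly the paper's: the paper simply cites \cite{HR}*{Chapter 12.4} (rather than Section 6.3, which treats $C^*$; Chapter 12 is where the covering isometries for $D^*$ are handled) and leaves implicit the passage to the maximal completion via the universal property afforded by Proposition~\ref{strbnd}. One small slip in your write-up: the compact-intertwining condition $V\rho_1(f)-\rho_2(f)V \in K$ is what preserves \emph{pseudolocality} under conjugation, not finite propagation; the latter requires the separate coarse condition that the support of $V$ lie close to the diagonal (respectively the graph of $f$), which is also part of the construction and which you yourself correctly invoke later when discussing functoriality.
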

\begin{proof}
 See the discussion in \cite{HR}*{Chapter 12.4}
\end{proof} 

Lemma~\ref{absext} immediately implies the following
\begin{proposition}\label{prop:inclusion}
 $C^*_{\maxi}(X)^{\Gamma}$ is an ideal of $D^*_{\maxi}(X)^{\Gamma}$.
 \label{absideal}
\end{proposition}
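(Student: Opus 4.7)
The plan is to reduce the assertion to the algebraic fact that $\mathbb{R}(X)^\Gamma$ is a two-sided $*$-ideal in $\mathbb{S}(X)^\Gamma$, and then to use Lemma \ref{absext} to ensure that passage to the maximal completions preserves this relation.

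First I would verify the purely algebraic claim. Given $T \in \mathbb{R}(X)^\Gamma$ and $S \in \mathbb{S}(X)^\Gamma$, both products $ST$ and $TS$ are $\Gamma$-invariant and have propagation at most $\prop(S)+\prop(T)$. For local compactness of $ST$ and any $f \in C_0(X)$, one has $STf = S(Tf)$, which is compact since $Tf$ is; and $fST = SfT + [f,S]T$, which is compact since both $fT$ and $[f,S]$ are. The assertions for $TS$ are symmetric. Hence $\mathbb{R}(X)^\Gamma$ is a two-sided $*$-ideal in $\mathbb{S}(X)^\Gamma$.

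The key remaining task is to show that the norm inherited on $\mathbb{R}(X)^\Gamma$ from $D^*_{\maxi}(X)^\Gamma$ agrees with its own maximal norm, so that the closure of $\mathbb{R}(X)^\Gamma$ inside $D^*_{\maxi}(X)^\Gamma$ is canonically isomorphic to $C^*_{\maxi}(X)^\Gamma$. The inequality $\|a\|_{D^*_{\maxi}} \le \|a\|_{C^*_{\maxi}}$ for $a\in\mathbb{R}(X)^\Gamma$ is immediate, since the restriction to $\mathbb{R}(X)^\Gamma$ of any $*$-representation of $\mathbb{S}(X)^\Gamma$ is a $*$-representation of $\mathbb{R}(X)^\Gamma$ with the same operator norm on $a$. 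The reverse inequality is exactly what Lemma \ref{absext} delivers: given any non-degenerate $*$-representation $\pi$ of $\mathbb{R}(X)^\Gamma$ on $H$, that lemma produces a $*$-representation $\hat{\pi}$ of $\mathbb{S}(X)^\Gamma$ on the same $H$ extending $\pi$, which is automatically non-degenerate because $\hat{\pi}(\mathbb{S}(X)^\Gamma)H \supset \pi(\mathbb{R}(X)^\Gamma)H$ and the latter is already dense. Taking the supremum over non-degenerate representations---which suffices to compute each maximal $C^*$-norm, since orthogonally splitting off the degenerate part does not change any operator norms---yields $\|a\|_{C^*_{\maxi}} \le \|a\|_{D^*_{\maxi}}$.

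Once the two norms on $\mathbb{R}(X)^\Gamma$ are identified, the $*$-homomorphism $C^*_{\maxi}(X)^\Gamma \to D^*_{\maxi}(X)^\Gamma$ induced by the inclusion of dense subalgebras is isometric on a dense subset, hence isometric, hence injective; its image is precisely the norm-closure of $\mathbb{R}(X)^\Gamma$ inside $D^*_{\maxi}(X)^\Gamma$. Because multiplication and involution are continuous in a $C^*$-algebra, the closure of a two-sided $*$-ideal is a two-sided $*$-ideal, which proves the proposition. I expect the only subtle point to be the upgrade from the algebraic ideal property to the ideal property at the level of the completions, and this is precisely the obstacle that Lemma \ref{absext} removes.
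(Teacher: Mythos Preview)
Your argument is correct and follows essentially the same approach as the paper, which simply records that the proposition is an immediate consequence of Lemma~\ref{absext}. You have spelled out in detail the two points the paper leaves implicit: the algebraic ideal property of $\mathbb{R}(X)^\Gamma$ in $\mathbb{S}(X)^\Gamma$, and the fact (via Lemma~\ref{absext}) that the maximal norm on $\mathbb{R}(X)^\Gamma$ coincides with the restriction of the maximal norm on $\mathbb{S}(X)^\Gamma$, so that $C^*_{\maxi}(X)^\Gamma$ embeds isometrically as a closed ideal.
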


\begin{proposition}
 The inclusion $\mathbb{S}(X)^{\Gamma} \rightarrow D^*_{\maxi}(X)^{\Gamma}$ gives rise to an isomorphism
 $\frac{\mathbb{S}(X)^{\Gamma}}{\mathbb{R}(X)^{\Gamma}} \cong \frac{D^*_{\maxi}(X)^{\Gamma}}{C^*_{\maxi}(X)^{\Gamma}}$. 
\end{proposition}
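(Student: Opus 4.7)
By Lemma~\ref{calg} the source $\mathbb{S}(X)^\Gamma/\mathbb{R}(X)^\Gamma$ is a $C^*$-algebra, and by Proposition~\ref{absideal} so is the target $D^*_{\maxi}(X)^\Gamma/C^*_{\maxi}(X)^\Gamma$. The composition $\mathbb{S}(X)^\Gamma \hookrightarrow D^*_{\maxi}(X)^\Gamma \to D^*_{\maxi}(X)^\Gamma/C^*_{\maxi}(X)^\Gamma$ sends the ideal $\mathbb{R}(X)^\Gamma \subset \mathbb{S}(X)^\Gamma$ to zero, so it descends to a $*$-homomorphism of $C^*$-algebras
$$\phi\colon \mathbb{S}(X)^\Gamma/\mathbb{R}(X)^\Gamma \to D^*_{\maxi}(X)^\Gamma/C^*_{\maxi}(X)^\Gamma,$$
which is automatically contractive and has closed image. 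It remains to verify surjectivity and injectivity.

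For surjectivity I would simply observe that, by the very definition of the maximal completion, $\mathbb{S}(X)^\Gamma$ is dense in $D^*_{\maxi}(X)^\Gamma$. Composing with the quotient map shows that the image of $\phi$ is dense, and since this image is already closed, $\phi$ is surjective.

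Injectivity is the main content, but it too reduces to an almost tautological observation. Suppose $T \in \mathbb{S}(X)^\Gamma$ satisfies $\phi([T])=0$, i.e.~the image of $T$ in $D^*_{\maxi}(X)^\Gamma$ lies in $C^*_{\maxi}(X)^\Gamma$. The underlying Hilbert-space representation provides a $C^*$-seminorm on $\mathbb{S}(X)^\Gamma$ dominated by the maximal one, hence by universal property induces a canonical morphism $D^*_{\maxi}(X)^\Gamma \to \mathfrak{D}^*(X)^\Gamma$ which sends an element of $\mathbb{S}(X)^\Gamma$ back to itself as an operator on $H$; the analogous morphism $C^*_{\maxi}(X)^\Gamma \to \mathfrak{C}^*(X)^\Gamma$ fits into a commuting square with the ideal inclusions, since restriction to $\mathbb{R}(X)^\Gamma$ of the maximal norm on $\mathbb{S}(X)^\Gamma$ agrees with the maximal norm on $\mathbb{R}(X)^\Gamma$ by Lemma~\ref{absext}. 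Chasing $T$ through this square, $T$ viewed in $\mathfrak{D}^*(X)^\Gamma$ actually lies in $\mathfrak{C}^*(X)^\Gamma$; that is, $T$ is locally compact. Since $T \in \mathbb{S}(X)^\Gamma$ is already $\Gamma$-invariant and of finite propagation, the definition of $\mathbb{R}(X)^\Gamma$ forces $T \in \mathbb{R}(X)^\Gamma$, so $[T]=0$.

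The hardest part is not any single step but the bookkeeping of universal properties ensuring that the maximal-to-reduced maps at the $C^*$-level are compatible with the algebraic inclusion $\mathbb{R}(X)^\Gamma \subset \mathbb{S}(X)^\Gamma$; this is exactly the content of Lemma~\ref{absext} together with the universal property of the maximal norms. Once this coherence is established, injectivity collapses to the observation that a pseudolocal finite-propagation operator which is also locally compact is, by definition, an element of the algebraic Roe algebra.
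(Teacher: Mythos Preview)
Your proof is correct and uses the same ingredients as the paper (Lemma~\ref{calg}, Lemma~\ref{absext}, and the universal property of the maximal completion). The only tactical difference is that the paper, instead of checking surjectivity and injectivity of $\phi$ separately, uses the universal property to build an explicit inverse $D^*_{\maxi}(X)^\Gamma/C^*_{\maxi}(X)^\Gamma \to \mathbb{S}(X)^\Gamma/\mathbb{R}(X)^\Gamma$ (extending the quotient map $\mathbb{S}(X)^\Gamma \to \mathbb{S}(X)^\Gamma/\mathbb{R}(X)^\Gamma$, which is possible precisely because the target is already a $C^*$-algebra) and then verifies that both compositions are the identity; your injectivity argument via the concrete representation $D^*_{\maxi}(X)^\Gamma \to \mathfrak{D}^*(X)^\Gamma$ is essentially the same move composed with the isomorphism of Lemma~\ref{calg}.
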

\begin{proof}
 Since $D^*_{\maxi}(X)^{\Gamma}$ is the maximal $C^*$-completion of $\mathbb{S}(X)^{\Gamma}$, the projection
 $\mathbb{S}(X)^{\Gamma} \rightarrow \frac{\mathbb{S}(X)^{\Gamma}}{\mathbb{R}(X)^{\Gamma}}$ gives rise
 to a morphism of $C^*$-algebras $D^*_{\maxi}(X)^{\Gamma} \rightarrow \frac{\mathbb{S}(X)^{\Gamma}}{\mathbb{R}(X)^{\Gamma}}$. Continuity of this map
 and the fact that its kernel contains $\mathbb{R}(X)^{\Gamma}$ implies that it induces a morphism
 $\frac{D^*_{\maxi}(X)^{\Gamma}}{C^*_{\maxi}(X)^{\Gamma}} \rightarrow \frac{\mathbb{S}(X)^{\Gamma}}{\mathbb{R}(X)^{\Gamma}}$. The composition
 $\frac{D^*_{\maxi}(X)^{\Gamma}}{C^*_{\maxi}(X)^{\Gamma}} \rightarrow \frac{\mathbb{S}(X)^{\Gamma}}{\mathbb{R}(X)^{\Gamma}} \rightarrow
 \frac{D^*(X)^{\Gamma}}{C^*(X)^{\Gamma}}$ is the identity on the set of classes of
 $\frac{D^*_{\maxi}(X)^{\Gamma}}{C^*_{\maxi}(X)^{\Gamma}}$ which have a representative from $\mathbb{S}(X)^{\Gamma}$.
 Since the latter set is dense, it follows that the
 composition is injective. On the other hand, by construction the composition
 $\frac{\mathbb{S}(X)^{\Gamma}}{\mathbb{R}(X)^{\Gamma}} \rightarrow \frac{D^*_{\maxi}(X)^{\Gamma}}{C^*_{\maxi}(X)^{\Gamma}} \rightarrow
 \frac{\mathbb{S}(X)^{\Gamma}}{\mathbb{R}(X)^{\Gamma}}$ is the identity  and the claim
 follows.
\end{proof}
\begin{corollary}
 There is an isomorphism $K_{*+1}(\frac{D^*_{\maxi}(X)}{C^*_{\maxi}(X)}) \cong
 K^{lf}_*(X)$.
 \label{maxpaschke}
\end{corollary}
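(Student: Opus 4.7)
The plan is to chain together three isomorphisms that have already been established earlier in the section; no genuinely new ingredient is required. I would begin by recording the identification coming from the preceding proposition: specialising the $\Gamma$-equivariant statement to the trivial group, the inclusion $\mathbb{S}(X)\hookrightarrow D^*_{\maxi}(X)$ induces an isomorphism
\begin{equation*}
\frac{D^*_{\maxi}(X)}{C^*_{\maxi}(X)} \iso \frac{\mathbb{S}(X)}{\mathbb{R}(X)}
\end{equation*}
of $C^*$-algebras. This is the step where the choice of \emph{maximal} completion matters, and it is the main conceptual input: it says that quotienting out $C^*_{\maxi}(X)$ from $D^*_{\maxi}(X)$ just kills the completion and returns the purely algebraic quotient.

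Next I would invoke Lemma~\ref{calg}, which identifies $\frac{\mathbb{S}(X)}{\mathbb{R}(X)}$ with $\frac{\mathfrak{D}^*(X)}{\mathfrak{C}^*(X)}$ via the inclusion $\mathbb{S}(X)\hookrightarrow \mathfrak{D}^*(X)$. Composing the two $C^*$-algebra isomorphisms yields
\begin{equation*}
\frac{D^*_{\maxi}(X)}{C^*_{\maxi}(X)} \iso \frac{\mathfrak{D}^*(X)}{\mathfrak{C}^*(X)}.
\end{equation*}
Finally, applying $K$-theory and invoking the theorem cited from \cite{PS}*{Proposition 3.4.11}, which identifies $K_{*+1}$ of the right-hand side with the locally finite $K$-homology $K^{lf}_*(X)$, gives the desired isomorphism.

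Since every intermediate statement is a theorem or lemma proved earlier in the section, the corollary is essentially bookkeeping, and I do not anticipate a serious obstacle. The only thing I would pause to check is that the isomorphism from the preceding proposition is indeed functorial in the way needed to be composed with the map of Lemma~\ref{calg}: both maps are induced by inclusions of $*$-algebras at the level of $\mathbb{S}(X)$, so the composite is simply the $C^*$-algebra morphism induced by $\mathbb{S}(X)\hookrightarrow \mathfrak{D}^*(X)$ modulo the respective ideals, and this is manifestly compatible. With that remark, the proof reduces to writing down the chain of isomorphisms and applying $K_{*+1}$.
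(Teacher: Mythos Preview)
Your proposal is correct and is exactly the argument the paper intends: the corollary is stated without proof because it follows immediately by chaining the preceding proposition (with trivial $\Gamma$), Lemma~\ref{calg}, and the Paschke duality theorem from \cite{PS}. There is nothing to add.
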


\subsection{Yu's Localisation Algebras and $K$-homology}
\begin{definition}[\cite{QR}*{Section 2}]
  Let $A$ be a normed $*$-algebra. By $\mathfrak{T}A$ denote the normed $*$-algebra of functions $f\colon [1,\infty) \rightarrow A$ which are bounded and
  uniformly continuous.
 \end{definition}
 Clearly, if $A$ is a $C^*$-algebra, so is $\mathfrak{T}A$. Important examples for us will be the algebras $\mathfrak{T}D^*_{\maxi}(X)$ and
 $\mathfrak{T}C^*_{\maxi}(X)$
 defined using some very ample representation of $C_0(X)$.  Now we are in the position to define the localisation algebra associated to a locally compact
 metric space $X$.
 \begin{definition}[\cite{QR}*{Section 2}]
  The $C^*$-algebra generated by functions $f\in \mathfrak{T}C^*_{\maxi}(X)^\Gamma$ with the properties
  \begin{itemize}
   \item $\prop f(t) < \infty$ for all $t \in [1,\infty)$
   \item $\prop f(t) \rightarrow 0$ as $t\rightarrow \infty$ 
  \end{itemize}
  is called the localisation algebra of $X$ and is denoted by $C^*_{L,\max}(X)^\Gamma$.
  \label{localg}
 \end{definition}
 \begin{remark}
  In analogy to the fact that $C^*_{\maxi}(X)^\Gamma$ is contained as an ideal in the
  $C^*$-algebra $D^*_{\maxi}(X)^\Gamma$, one can define a $C^*$-algebra denoted by
  $D^*_{L,\maxi}(X)^\Gamma$, which contains $C^*_{L,\maxi}(X)^\Gamma$ as an ideal.
  This is the $C^*$-algebra generated by the elements in $\mathfrak{T}D^*_{\maxi}(X)^\Gamma$ with the two properties
  of Definition~\ref{localg}.
 \end{remark}

 Yu's theorem states that the $K$-theory groups of the localisation algebra
 are isomorphic to the locally finite $K$-homology groups.
 \begin{theorem}[\cite{QR}*{Theorem.\ 3.4}]
 Let $X$ be a locally compact metric space and suppose $C^*_{L,\maxi}(X)$ is defined using a very ample representation. Then the local index map
 $\ind_L\colon K^{lf}_*(X) \rightarrow K_*(C^*_{L,\maxi}(X))$ of \cite{QR}*{Definition
   2.4} is an isomorphism. Furthermore, the diagram
 $$\begin{tikzcd}
    K^{lf}_*(X) \arrow{r}{\ind_L} \arrow{rd}{\mu} & K_*(C^*_{L,\maxi}(X)) \arrow{d}{(\ev_1)_*}\\
    & K_*(C^*_{\maxi}(X))
 \end{tikzcd}$$
 is commutative. Here $\mu$ denotes the index map $K^{lf}_*(X)\cong K_{*+1}(\frac{D^*_{\maxi}(X)}{C^*_{\maxi}(X)}) \rightarrow K_*(C^*_{\maxi}(X))$.
\end{theorem}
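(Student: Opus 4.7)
The plan is to adapt the proof strategy of Qiao and Roe for the reduced case to the maximal setting, substituting the maximal Roe and structure algebras throughout. The foundational results established in Section \ref{sec:maxroe} — in particular the existence of the maximal structure algebra (Proposition \ref{strbnd}), the ideal inclusion $C^*_{\maxi}(X)^\Gamma\triangleleft D^*_{\maxi}(X)^\Gamma$ (Proposition \ref{absideal}), and maximal Paschke duality (Corollary \ref{maxpaschke}) — are exactly what is needed to make this substitution meaningful.

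At the technical heart of Qiao and Roe's argument lies the short exact sequence
\begin{equation*}
0\to C^*_{L,\maxi}(X)^\Gamma \to D^*_{L,\maxi}(X)^\Gamma \to D^*_{L,\maxi}(X)^\Gamma/C^*_{L,\maxi}(X)^\Gamma\to 0,
\end{equation*}
combined with the vanishing $K_*(D^*_{L,\maxi}(X)^\Gamma)=0$, which identifies $K_*(C^*_{L,\maxi}(X)^\Gamma)$ with $K_{*+1}$ of the quotient. The vanishing follows from an Eilenberg-swindle style argument, adapted to the localisation setting by using the freedom to rescale the time parameter in $[1,\infty)$; crucially, the sums involved only ever mix operators whose propagation tends uniformly to zero, so they converge in any $C^*$-completion of $\mathbb{S}(X)^\Gamma$, hence in the maximal one. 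One then identifies the quotient with $\mathfrak{T}\bigl(D^*_{\maxi}(X)^\Gamma/C^*_{\maxi}(X)^\Gamma\bigr)$ modulo functions vanishing at infinity, so that its $K_{*+1}$ agrees with that of $D^*_{\maxi}(X)^\Gamma/C^*_{\maxi}(X)^\Gamma$, which by Corollary \ref{maxpaschke} is $K^{lf}_*(X)$. This composition produces the inverse of $\ind_L$. To upgrade from compactly supported spaces to arbitrary $X$, one uses a Mayer-Vietoris/five-lemma argument on a cover by cocompact subsets.

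The main obstacle, as with so many of the arguments in this paper, is ensuring that the non-exactness of the maximal completion functor does not spoil the exact sequences used in Qiao and Roe's reduced proof. The key point to check is that the Mayer-Vietoris sequence genuinely holds in the maximal setting — this is handled in the style of the proof of Theorem \ref{theo:abs_vers_rel}, by first exhibiting the relevant sequence at the $*$-algebraic level, then invoking Lemma \ref{absext} together with the universal property of the maximal completion to ensure exactness passes through. A similar care is needed for the direct-limit identifications of the localised algebras over a cofinal family of bounded neighbourhoods.

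Finally, the commutativity of the displayed triangle is a formal consequence of naturality. The evaluation map $\ev_1$ induces a morphism from the short exact sequence of localisation algebras to the short exact sequence $0\to C^*_{\maxi}(X)\to D^*_{\maxi}(X)\to D^*_{\maxi}(X)/C^*_{\maxi}(X)\to 0$; the associated boundary maps then intertwine $\ind_L$ and $\mu$ (the latter being by definition the boundary of the non-localised sequence composed with Paschke duality), which is exactly the claimed identity.
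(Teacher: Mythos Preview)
Your approach is essentially the same as the paper's: both adapt the Qiao--Roe argument for the reduced case by substituting the maximal Paschke duality of Corollary~\ref{maxpaschke}. The paper's proof is considerably terser, simply noting that Corollary~\ref{maxpaschke} is the one ingredient needed to make the argument of \cite{QR} go through literally; your additional worries about Mayer--Vietoris and exactness under maximal completion are not raised there.
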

\begin{proof}
 First note that the local index map as defined in \cite{QR} can be defined analogously in the maximal case. In \cite{QR}*{Theorem.\ 3.4} the theorem is proven for the reduced localisation algebra and uses the isomorphism
 $K_{*+1}(\frac{D_{\red}^*(X)}{C_{\red}^*(X)}) \cong K^{lf}_*(X)$. However, Corollary~\ref{maxpaschke}
 states that the isomorphism still holds if we replace the
 reduced Roe and structure algebra with the maximal ones. Thus, the argument of \cite{QR} can be used literally.
\end{proof}
Having the above theorem in mind, we will, from now on, use the notation $K^L_*(X)$ for the group $K_*(C^*_{L,\maxi}(X))$. Given a closed subset
$Y$ of $X$, we are now going to define the relative $K$-homology groups using localisation algebras and discuss the existence of a long exact
sequence for pairs. Chang, Weinberger and Yu define the relative groups by using a concrete very ample representation, which we will now describe. 

Let $Y \subset X$ be as above. Choose a countable dense set $\Gamma _X$ of $X$
such that $\Gamma_Y \coloneqq \Gamma_X \cap Y$ is dense in $Y$. Define $C^*_{L,\maxi}(X)$ and $C^*_{L,\maxi}(Y)$
using the very ample representations $H_X = l^2(\Gamma_X) \otimes l^2(\mathbb{N})$ and
$H_Y = l^2(\Gamma_Y) \otimes l^2(\mathbb{N})$ respectively. The constant family of
isometries $V_t := \iota$, where $\iota \colon H_Y \rightarrow H_X$ is the
inclusion
covers the inclusion $Y \rightarrow X$ in the sense of \cite{QR}*{Def.\ 3.1}. Hence, applying $\ad(V_t)$ pointwise we obtain a $C^*$-algebra morphism
$C^*_{L,\maxi}(Y) \rightarrow C^*_{L,\maxi}(X)$, which we will denote by $\iota(X,Y)$. Note that on elements with finite propagation
this map for each $t$ is just the extension by zero of an operator on
$H_Y$ to an operator on $H_X$. We get a map $\iota(X,Y)_*\colon K^L_*(Y) \rightarrow K^L_*(X)$.

Now denote by $K^L_*(X,Y)$ the group $K_{*-1}(C_{\iota(X,Y)})$, where $S$
denotes the suspension and $C_{\iota(X,Y)}$ the mapping cone of
$\iota(X,Y)$. The short exact sequence
$$0 \rightarrow  SC^*_{L,\maxi}(X) \rightarrow C_{\iota(X,Y)} \rightarrow C^*_{L,\maxi}(Y) \rightarrow 0$$
gives rise to a long exact sequence 
\begin{equation*}
\hdots \rightarrow  K_{*}(C^*_{L,\maxi}(Y)) \rightarrow
K_{*-1}(SC^*_{L,\maxi}(X)) \rightarrow K_{*-1}(C_{\iota(X,Y)}) \rightarrow
\cdots
\end{equation*}

of $K$-theory groups.  
Using the canonical isomorphism $K_{*-1}(S(\cdot))= K_{*}(\cdot)$ this sequence
becomes the desired long exact sequence of a pair
$$\hdots \rightarrow K^L_*(Y) \rightarrow K^L_*(X) \rightarrow K^L_*(X,Y) \rightarrow \hdots,$$
  constructed solely using localisation algebras.

\subsubsection{Relative Localisation Algebra}
Let $X$ and $Y$ be as above. We would like to extend $C^*_{L,\maxi}(Y)^{\Gamma}
\subset C^*_{L,\maxi}(X)^{\Gamma}$ to an ideal with the same $K$-theory.
\begin{definition}
 Denote by $C^*_{L,\maxi}(Y \subset X)^{\Gamma}$ the ideal in $C^*_{L,\maxi}(X)^{\Gamma}$ generated by
 functions $f \in \mathfrak{T}C^*(X)^\Gamma$ such that for all
 $t \in [1,\infty)$, $f(t)$ is supported in an $S(t)$-neighbourhood of $Y$, where $S\colon [1,\infty) \rightarrow \mathbb{R}$ is some function with
 $S(t) \rightarrow 0$ as $t\rightarrow \infty$.
\end{definition}
\begin{lemma}[\cite{RZ}*{Lemma 1.4.18}]
 Let $Y$ and $X$ be as above. The inclusion $Y \rightarrow X$ induces isomorphisms
 $K_*(C^*_{L,\maxi}(Y)^\Gamma) \cong K_*(C^*_{L,\maxi}(Y \subset X)^\Gamma)$.
\end{lemma}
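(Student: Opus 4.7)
The plan is to adapt the strategy of Theorem~\ref{theo:abs_vers_rel} to the localised setting. For each $r>0$, I introduce the intermediate ideal $J_r\subset C^*_{L,\maxi}(X)^\Gamma$ generated by families $f\in\mathfrak{T}C^*_{\maxi}(X)^\Gamma$ with $\prop(f(t))\to 0$ and such that, for all $t\in[1,\infty)$, $\supp(f(t))$ lies in the $(r+S(t))$-neighbourhood of $Y$ for some function $S\colon[1,\infty)\to\mathbb{R}$ with $S(t)\to 0$. A cofinality argument on generators, together with the defining condition $S(t)\to 0$ from the definition of $C^*_{L,\maxi}(Y\subset X)^\Gamma$, identifies the direct limit $\varinjlim_{r\downarrow 0}J_r$ with $C^*_{L,\maxi}(Y\subset X)^\Gamma$. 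By continuity of $K$-theory, it suffices to show that for each fixed $r>0$ the inclusion induces an isomorphism $K_*(C^*_{L,\maxi}(\overline{B_r(Y)})^\Gamma)\cong K_*(J_r)$.

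For this, I would repeat the proof of Theorem~\ref{theo:abs_vers_rel} pointwise in the time parameter. The key observation is that the representation-theoretic extension provided by Lemma~\ref{relext} is realised by conjugation with a single covering isometry $\iota\colon H_{\overline{B_r(Y)}}\to H_X$, independent of $t$. Consequently it extends diagonally to a $*$-homomorphism $\mathfrak{T}\mathbb{R}(\overline{B_r(Y)})^\Gamma\to\mathfrak{T}\mathbb{R}(X)^\Gamma$ which preserves uniform continuity, the propagation-to-zero condition, and the support condition. The universal property of maximal completions together with the direct limit argument (over $C^*_{L,\maxi}(\overline{B_{r'}(Y)}\subset X)^\Gamma$ for $r'\searrow r$) from the proof of Theorem~\ref{theo:abs_vers_rel} then yields the desired isomorphism of $C^*$-algebras, and hence on $K$-theory.

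Finally, the bounded geometry hypothesis makes the inclusion $Y\hookrightarrow\overline{B_r(Y)}$ a coarse equivalence for every $r>0$, so functoriality of the localisation algebra under uniform maps yields $K_*(C^*_{L,\maxi}(Y)^\Gamma)\cong K_*(C^*_{L,\maxi}(\overline{B_r(Y)})^\Gamma)$. Chaining this with the previous isomorphism and taking the direct limit over $r\downarrow 0$ gives the claim.

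The main obstacle I expect is technical rather than conceptual: one must verify that the extension procedure of Lemma~\ref{relext}, applied pointwise, respects the structure of $\mathfrak{T}(\cdot)$, i.e.\ produces a uniformly continuous family and preserves the propagation-decay and support-shrinkage conditions. This reduces to the fact that the extension is conjugation by a single (time-independent) isometry, which is obviously compatible with uniform continuity and commutes with the propagation/support bookkeeping; so although this is the point requiring genuine care, it introduces no new difficulty beyond that already handled in the proof of Theorem~\ref{theo:abs_vers_rel}.
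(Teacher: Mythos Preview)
Your proposal has two genuine gaps that prevent it from going through.

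First, the direct-limit identification $\varinjlim_{r\downarrow 0} J_r \cong C^*_{L,\maxi}(Y\subset X)^\Gamma$ is ill-posed. As $r$ decreases the support condition becomes \emph{more} restrictive, so the ideals $J_r$ shrink; there are no natural connecting maps $J_r \to J_{r'}$ for $r'<r$, and what you have is an inverse system, with which $K$-theory does not commute. This is precisely the opposite of the situation in Theorem~\ref{theo:abs_vers_rel}: there the Roe-algebra ideal $C^*_{\maxi}(Y\subset X)^\Gamma$ is by definition an \emph{increasing} union over $r\to\infty$, because ``supported near $Y$'' means supported in some fixed neighbourhood. For the localisation ideal the defining condition $S(t)\to 0$ forces the support to shrink to $Y$ as $t\to\infty$, so the neighbourhood filtration runs the wrong way and the argument of Theorem~\ref{theo:abs_vers_rel} does not transfer.

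Second, the claim that coarse equivalence of $Y$ and $\overline{B_r(Y)}$ yields an isomorphism on $K_*(C^*_{L,\maxi}(\cdot)^\Gamma)$ is false in this generality. The $K$-theory of the localisation algebra computes locally finite $K$-homology, which is invariant under proper homotopy equivalence but \emph{not} under coarse equivalence; without a hypothesis such as $Y$ being a neighbourhood deformation retract in $X$, the inclusion $Y\hookrightarrow\overline{B_r(Y)}$ need not induce a $K$-homology isomorphism.

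The paper's proof takes a different route: it cites Zeidler's argument in the reduced case and observes that the foundational results of Section~\ref{sec:max_fundationas} let the same proof go through for the maximal completion. That argument exploits the time parameter of the localisation algebra directly (an asymptotic compression/swindle showing the quotient has trivial $K$-theory) rather than attempting to realise $C^*_{L,\maxi}(Y\subset X)^\Gamma$ as a limit of neighbourhood algebras. If you want to repair your approach, you would need a genuinely new mechanism to handle the shrinking-support condition; the template of Theorem~\ref{theo:abs_vers_rel} is not the right one here.
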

\begin{proof}
 In \cite{RZ}*{Lemma 1.4.18}, this is proven in the reduced case. However
 in light of the discussion in Section \ref{sec:max_fundationas}, the
 modification of the arguments for use in  the maximal setting is
 straightforward.
\end{proof}

\subsection{Relative Group $C^*$-algebra}

Let $X$ be a proper path-connected metric space and $Y$ a path-connected subset of $X$. The inclusion $Y \rightarrow X$ induces a map
$\pi_1(Y) \rightarrow \pi_1(X)$, where we choose a point $y_0 \in Y$ to construct the fundamental groups and the latter map. This map in turn induces a
morphism $\varphi\colon C^*_{\maxi}(\pi_1(Y)) \rightarrow C^*_{\maxi}(\pi_1(X))$. The \emph{relative group $C^*$-algebra} is defined as
\begin{equation*}
 C^*_{\max}(\pi_1(X),\pi_1(Y)):= SC_\varphi.
\end{equation*}
The short exact sequence
$$0 \rightarrow  SC^*_{\maxi}(\pi_1(X)) \rightarrow C_\varphi \rightarrow C^*_{\maxi}(\pi_1(Y)) \rightarrow 0$$
and the Bott periodicity isomorphism gives a long exact sequence 
$$ \rightarrow K_*(C^*_{\maxi}(\pi_1(Y))) \rightarrow K_*(C^*_{\maxi}(\pi_1(X))) \rightarrow K_*(C^*_{\maxi}(\pi_1(X),\pi_1(Y))) \rightarrow .$$
\begin{remark}
 Note that the above $C^*$-algebras are independent of
 the chosen point $y_0$ up to an isomorphism which is well defined up to
 conjugation by a unitary and therefore is canonical on K-theory.
\end{remark}
\begin{remark}
 Recall that unless $\varphi\colon \pi_1(Y) \rightarrow \pi_1(X)$ is injective it does not necessarily induce a map of the reduced group
 $C^*$-algebras. Thus, the relative group $C^*$-algebra does not always have a reduced counterpart. 
\end{remark}

\subsection{The Relative Index Map}
The index of Chang, Weinberger and Yu is the image of a fundamental class in
$K^L_*(X,Y)$ under a mapping $\mu\colon K^L_*(X,Y) \rightarrow
K_*(C^*_{\maxi}(\pi_1(X),\pi_1(Y)))$, which they call the relative Baum-Connes
map. In this subsection we present the definition of this map along the lines of \cite{CWY}*{Section 2}. There the authors relate the $K$-theory groups of the localisation algebras and their equivariant counterparts and
exploit Theorem~\ref{eg} to relate the latter $K$-theory groups with those of the group $C^*$-algebras of the fundamental groups.

Let $X$ be a locally compact, path-connected, separable metric space and $Y$ be a closed path-connected subset of $X$. We suppose that the universal
coverings $p\colon \widetilde{X} \rightarrow X$ and $p^\prime\colon\widetilde{Y} \rightarrow Y$ of these spaces exist (e.g.\ suppose $X$ and $Y$ are $CW$-complexes) and are endowed with an invariant metric and that the metrics on $X$ and $Y$ are the
pushdowns of these metrics, i.e.\ the projections are local isometries. In the case of smooth manifolds we can start with Riemannian metrics on $X$ and $Y$ and
take their pullbacks to be the invariant Riemannian metrics on $\widetilde{X}$ and $\widetilde{Y}$. Pick countable dense subsets $\Gamma_X$ and $\Gamma_Y$ of $X$ and $Y$
such that $\Gamma_Y \subset \Gamma_X$ as before. Denote by $\Gamma_{\tilde{X}}$ and $\Gamma_{\tilde{Y}}$ the
preimages of $\Gamma_X$ and $\Gamma_Y$, respectively. Construct the (equivariant) Roe algebras and the (equivariant) localisation algebras using the representations
$l^2(\Gamma_\cdot) \otimes l^2(\mathbb{N})$. We recall that the equivariant algebras are
constructed using the action of fundamental groups by deck transformations.

\begin{proposition}[\cite{CWY}*{Proposition 2.8}] \label{eps}
 Let $X$ and $\tilde{X}$ be as above. Suppose furthermore that $X$ is compact. Then there exists an $\epsilon > 0$ depending on $X$ such that for finite
 propagation locally compact operators $T$ with $\prop(T) < \epsilon$, the
 kernel $\tilde{k}$ defined in the following defines an element of
 $C^*_{\maxi}(\tilde{X})^{\pi_1(X)}$, which we will denote by $L(T)$.

  Observe for the definition of $L(T)$ that a finite propagation locally
  compact operator $T$ on
  $l^2(\Gamma_X) \otimes H$ with $\prop(T) = r$ is given by a matrix
  $\Gamma_X \times \Gamma_X \xrightarrow{k} K(H)$ such that $k(x,x^\prime)$ is
  $0$ for all $(x,x^\prime) \in \Gamma_X \times \Gamma_X$ with
  $d_X(x,x^\prime) \geq r$. Define the lifted operator on
  $l^2(\Gamma_{\tilde{X}}) \otimes H$ using the matrix
  $(\tilde{x},\tilde{x}^\prime) \xmapsto{\tilde{k}}
  k(p(\tilde{x}),p(\tilde{x}^\prime))$ if
  $d_{\tilde{X}}(\tilde{x},\tilde{x}^\prime) < r$ and $0$ otherwise.

  Vice versa, every equivariant kernel $\tilde T\in C^*_{\maxi}(\tilde X)^{\pi_1(X)}$ of propagation $<\epsilon$
  is such a lift, and this in a unique way, defining the \emph{push-down}
  $\pi(\tilde T)\in C^*_{\maxi}(X)$ as the inverse of the lift.

 For the appropriate choice of $\epsilon$, the covering $\tilde X\to X$ should
 be trivial when restricted to balls say of radius $2\epsilon$.
\end{proposition}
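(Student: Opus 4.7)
The plan is to produce $\epsilon$ from compactness of $X$, use it to see that the lifted kernel defines a genuine element of $\mathbb{R}(\tilde X)^{\pi_1(X)}$, and then invoke the definition of the maximal Roe algebra to conclude. First I would choose $\epsilon$ as follows: since $p\colon \tilde X\to X$ is a covering map and $X$ is compact, I pick a finite cover of $X$ by evenly covered open sets and let $4\epsilon$ be a Lebesgue number of this cover, so that each open ball $B_{2\epsilon}(x)\subset X$ is evenly covered. Because $p$ is a local isometry, this yields that for every $\tilde x\in\tilde X$ the restriction $p\colon B_{2\epsilon}(\tilde x)\to B_{2\epsilon}(p(\tilde x))$ is an isometric homeomorphism, and any two lifts of the same point of $X$ are at distance $\geq 2\epsilon$ in $\tilde X$.

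With this $\epsilon$ fixed, well-definedness of the kernel $\tilde k$ is immediate: for $r=\prop(T)<\epsilon$ the condition $d_{\tilde X}(\tilde x,\tilde x')<r$ matches exactly $d_X(p(\tilde x),p(\tilde x'))<r$, because within the $\epsilon$-ball the projection is an isometry. Equivariance $\tilde k(\gamma\tilde x,\gamma\tilde x')=\tilde k(\tilde x,\tilde x')$ follows from $p\circ\gamma=p$, and finite propagation of $L(T)$ (bounded by $r$) is built into the formula. Local compactness transfers from $T$: a function $f\in C_0(\tilde X)$ supported in a relatively compact set $K$ meets only finitely many $\pi_1(X)$-translates of any fundamental domain, and on each such translate $L(T)$ looks like a copy of $T$ multiplied by a compactly supported function downstairs, so $fL(T)$ and $L(T)f$ are compact.

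The only nontrivial point is boundedness of the lift as an operator on $l^2(\Gamma_{\tilde X})\otimes H$. I would argue this by a Schur test: bounded geometry of $\Gamma_{\tilde X}$ (inherited from bounded geometry of $X$ via the fact that $p$ is a local isometry) gives a uniform bound $N_\epsilon$ on the number of points of $\Gamma_{\tilde X}$ in any ball of radius $\epsilon$, so $\tilde k$ has at most $N_\epsilon$ nonzero entries per row and per column, each an operator on $H$ of norm bounded by $\|T\|_{L(l^2(\Gamma_X)\otimes H)}$, yielding $\|L(T)\|\leq N_\epsilon\|T\|$. Alternatively, one decomposes $\tilde X$ into $\pi_1(X)$-translates of an $\epsilon$-thickened fundamental domain on each of which $L(T)$ is an isometric copy of the restriction of $T$; boundedness then follows from finite multiplicity of this cover. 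Once bounded, $L(T)$ lies in the algebraic Roe $*$-algebra $\mathbb{R}(\tilde X)^{\pi_1(X)}$, and hence, by definition, in $C^*_{\maxi}(\tilde X)^{\pi_1(X)}$.

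For the inverse direction I would fix an equivariant operator $\tilde T$ of propagation $<\epsilon$ with kernel $\tilde k$ and define the push-down kernel $k(x,x'):=\tilde k(\tilde x,\tilde x')$, where $\tilde x$ is any lift of $x$ and $\tilde x'$ is the \emph{unique} lift of $x'$ inside $B_\epsilon(\tilde x)$ (existence and uniqueness are ensured by the choice of $\epsilon$, since other lifts of $x'$ are at distance $\geq 2\epsilon-\epsilon=\epsilon$). Equivariance of $\tilde k$ makes this independent of the choice of $\tilde x$, and the same Schur estimate shows that $\pi(\tilde T)$ is bounded. The two constructions are manifestly inverse at the level of kernels, which finishes the argument. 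The main obstacle is the boundedness estimate of step three; every other assertion in the proposition reduces to unwinding the covering geometry once $\epsilon$ has been chosen correctly.
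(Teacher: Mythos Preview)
The paper does not supply its own proof of this proposition: it is cited from \cite{CWY} and the only indication given is the final sentence about choosing $\epsilon$ so that $2\epsilon$-balls are evenly covered. Your choice of $\epsilon$ via a Lebesgue number argument matches that hint exactly, and your treatment of equivariance, finite propagation, local compactness, and the push-down construction is correct.

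There is, however, a genuine gap in your boundedness step. The Schur-test argument fails because $\Gamma_X$ is by construction a \emph{dense} countable subset of $X$, not a uniformly discrete one; consequently $\Gamma_{\tilde X}=p^{-1}(\Gamma_X)$ is dense in $\tilde X$ and every $\epsilon$-ball contains infinitely many of its points. There is no bound $N_\epsilon$ on the number of nonzero entries per row or column of $\tilde k$, so the estimate $\|L(T)\|\le N_\epsilon\|T\|$ is vacuous. Your alternative argument is closer to the truth but also needs adjustment: the $\epsilon$-thickened fundamental domain $U_\epsilon(\tilde D)$ does not in general map isometrically (or even injectively) to $X$ under $p$, so $L(T)$ restricted to it is not literally an isometric copy of $T$.

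The repair is to work with genuine $\epsilon$-balls rather than the whole fundamental domain. Cover $X$ by finitely many $\epsilon$-balls $B_1,\dots,B_n$ and choose a subordinate Borel partition $A_1,\dots,A_n$. Lifting gives a $\pi_1(X)$-invariant partition $\{\tilde A_{\gamma,i}\}$ of $\tilde X$ with each $\tilde A_{\gamma,i}$ contained in a $2\epsilon$-ball on which $p$ \emph{is} an isometry; there one really does have $\|L(T)\chi_{\tilde A_{\gamma,i}}v\|\le\|T\|\,\|\chi_{\tilde A_{\gamma,i}}v\|$. Since the supports of the $L(T)\chi_{\tilde A_{\gamma,i}}v$ have multiplicity bounded by some $M$ depending only on the finite cover, Cauchy--Schwarz gives $\|L(T)\|\le\sqrt{M}\,\|T\|$. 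This is essentially the mechanism the paper later exploits (via passage to a discrete net and the decomposition of \cite{GongWangYu}*{Lemma 3.4}) in Lemma~\ref{decp} to obtain the sharper norm comparison needed for the maximal completion.
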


\begin{remark}\label{rem:ext_of_prop_lift}
 Later we will need a slight generalisation of Proposition~\ref{eps} for manifolds obtained by attaching an infinite cylinder to a compact manifold with
 boundary. It is evident that the $\epsilon$ obtained for the manifold with
 boundary also works for the manifold with the infinite cylinder attached, and
 then the construction indeed goes through without any modification.
\end{remark}

\begin{definition}\label{def:loc_lift}
  Let $T\colon s\mapsto T_s$ be an element of $\mathbb{R}_L(X)$, i.e.\ $T_s$ is
  locally compact and has finite propagation which tends to $0$ as
  $s\to\infty$. Therefore $\prop(T_{s}) < \epsilon$ for all $s \geq s_T$ with
  some $s_T\in [1,\infty)$. Define the lift
  \begin{equation*}
L(T)\colon s\mapsto
\begin{cases}
  L(T_{s_T}); & s\le s_T\\
  L(T_s); & s\ge s_T
\end{cases}
 \end{equation*}
 to obtain an element in
 $C^*_{L,\maxi}(\tilde{X})^{\pi_1(X)}$.

 Similarly, for $\tilde T\colon s\mapsto \tilde T_s$ an element of
 $\mathbb{R}_L(\tilde X)^{\pi_1(X)}$ such that $\tilde T_s$ is locally
 compact, equivariant and has finite propagation which tends to $0$ as
 $t\to\infty$ (in particular $\prop(\tilde T_{s}) < \epsilon$ for all $s \geq
 s_{\tilde T}$ for some
  $s_{\tilde T}\in [1,\infty)$) define its \emph{push-down}
  \begin{equation*}
    \pi(\tilde T)\colon s\mapsto
    \begin{cases}
      \pi(\tilde T_{s_{\tilde T}}); & s\le s_{\tilde T}\\
      \pi(\tilde T_s); & s\ge s_{\tilde T}.
    \end{cases}
  \end{equation*}
\end{definition}

\begin{proposition}\label{prop:lift_iso}
  Set $C_0^*(\widetilde X)^{\pi_1(X)}:= C_0([1,\infty), C^*_{\maxi}(\tilde
    X)^{\pi_1(X)})$, the ideal of $C^*_{L,\max}(\widetilde{X})^\Gamma$ consisting of functions whose norm tends to $0$ as
    $s\to\infty$. 
  The assignments of Definition \ref{def:loc_lift} give rise to continuous
  $*$-homomorphisms
  \begin{equation*}
    \begin{split}
      L &\colon \mathbb{R}_L(X) \to {C^*_{L,\maxi}(\tilde
        X)^{\pi_1(X)}}/{C^*_0(\tilde X)^{\pi_1(X)}}\\
      \pi & \colon \mathbb{R}_L(\tilde X)^{\pi_1(X)} \to
      C^*_{L,\maxi}(X)/{C^*_0(X)},
    \end{split}
  \end{equation*}
  where we use that the algebra of functions vanishing at $\infty$ is an ideal
  of the localisation algebra. Being continuous, they extend to the
  $C^*$-completions, and they evidently map the ideal $C_0([1,\infty),C^*_{\maxi}(X))$
  or $C_0([1,\infty),C^*_{\maxi}(\tilde X)^{\pi_1(X)})$ to $0$, so that we get
  $C^*$-algebra homomorphisms
  \begin{equation*}
    \begin{split}
      L &\colon {C^*_{L,\maxi}(X)}/{C_0^*(X)}\to {C^*_{L,\maxi}(\tilde
        X)^{\pi_1(X)}}/{C_0^*(\tilde X)^{\pi_1(X)}}\\
      \pi & \colon {C^*_{L,\maxi}(\tilde
        X)^{\pi_1(X)}}/{C_0^*(\tilde X)^{\pi_1(X)}} \to
      {C^*_{L,\maxi}(X)}/{C_0^*(X)}.
     \end{split}
  \end{equation*}
  By construction these two homomorphisms are inverse to
  each other.

  Being cones,
  $C_0([1,\infty),C^*_{\maxi}(\tilde X)^{\pi_1(X)})$ and $C_0(1,\infty),C^*_{\maxi}(X))$ have vanishing K-theory and
 by the 6-term exact sequence the projections induce isomorphisms in K-theory
  \begin{equation*}
    \begin{split}
      K_*(C^*_{L,\maxi}(\tilde X)^{\pi_1(X)}) &\to K_*(C^*_{L,\maxi}(\tilde
      X)^{\pi_1(X)}/C_0^*(\tilde X)^{\pi_1(X)}),\\
      K_*(C^*_{L,\maxi}(X))& \to K_*(C^*_{L,\maxi}( X)/C_0^*(X)).
    \end{split}
  \end{equation*}
  We therefore get a well defined induced isomorphism in K-theory
  \begin{equation*}
    L_*\colon K_*^L(X)= K_*(C_{L,\maxi}^*(X))\to K_*(C_{L,\maxi}^*(\tilde X)^{\pi_1(X)})
  \end{equation*}
  with inverse $\pi_*$.
\end{proposition}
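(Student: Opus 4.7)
The plan has three components: (i) check that $L$ and $\pi$ are well-defined $*$-homomorphisms into the respective quotients by the cone ideals; (ii) establish continuity with respect to the maximal norms so that they extend to the $C^*$-completions; (iii) use vanishing of K-theory of the cones to deduce the K-theory isomorphism via the 6-term exact sequence.

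For (i), I would first verify independence from the choice of threshold $s_T$: any two admissible thresholds produce representatives that agree for sufficiently large $s$ and hence differ by an element of $C_0^*(\tilde X)^{\pi_1(X)}$. Since Proposition~\ref{eps} yields a lift of the same propagation, one has $\prop(L(T)(s)) = \prop(T_s) \to 0$, placing $L(T)$ in $C^*_{L,\maxi}(\tilde X)^{\pi_1(X)}$. Multiplicativity modulo the cone ideal then follows by choosing a common threshold where both $T_s$ and $T'_s$ have propagation less than $\epsilon/2$: the product $T_s T'_s$ then has propagation less than $\epsilon$, so Proposition~\ref{eps} gives $L(T_s T'_s) = L(T_s) L(T'_s)$, while the discrepancy for small $s$ is absorbed in $C_0^*(\tilde X)^{\pi_1(X)}$. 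The same argument handles sums, adjoints, and the parallel statements for $\pi$.

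Step (ii) will be the principal technical obstacle: I need a bound of the form $\|L(T)\|_{\maxi} \le C \|T\|_{\maxi}$ that is uniform in the function variable. The reduced-norm version holds essentially tautologically, since on a ball over which the covering trivialises, lifting an operator just produces a direct sum of equivariant copies and preserves the operator norm. To bridge to the maximal norm, I would invoke Proposition~\ref{prop:max_exists}: restricted to operators of propagation bounded by a fixed $R < \epsilon$ the maximal and reduced norms are equivalent on both sides, so the algebraic lift, being isometric for the reduced norms, is bi-Lipschitz for the maximal norms with constants independent of $s$ past the threshold. The same estimate then gives continuity of $\pi$, and the extensions to the $C^*$-completions follow by density.

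For (iii), once $L$ and $\pi$ are continuous $*$-homomorphisms between the quotients, Proposition~\ref{eps} shows they are mutual inverses on the dense subalgebras and hence globally, so the induced maps on quotients are $C^*$-algebra isomorphisms. The ideals $C_0^*(X)$ and $C_0^*(\tilde X)^{\pi_1(X)}$ are cones in the usual sense (via the homeomorphism $[1,\infty) \cong (0,1]$), hence have vanishing K-theory. Applying the 6-term exact sequence to
\begin{equation*}
 0 \to C_0^*(\,\cdot\,) \to C^*_{L,\maxi}(\,\cdot\,) \to C^*_{L,\maxi}(\,\cdot\,)/C_0^*(\,\cdot\,) \to 0
\end{equation*}
forces the quotient projections to induce K-theory isomorphisms, and composing with the quotient-level $C^*$-isomorphism induced by $L$ produces the asserted $L_*$ with inverse $\pi_*$.
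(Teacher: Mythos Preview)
Your three-step outline matches the paper's argument, and steps (i) and (iii) are handled correctly. The substantive difference, and the one real gap, is in step (ii).

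You propose to obtain the bound $\|L(T)\|_{\maxi} \le C\|T\|_{\maxi}$ by invoking Proposition~\ref{prop:max_exists} on both $X$ and $\tilde X$ and bridging via the reduced norm, claiming that ``on a ball over which the covering trivialises, lifting an operator just produces a direct sum of equivariant copies and preserves the operator norm.'' That justification is only local: restricted to vectors supported in $p^{-1}(B)$ for a single evenly-covered ball $B$, the lifted operator does act as a direct sum of copies, but a propagation-$\epsilon$ operator is not supported over one such ball, and patching local norm bounds into a global one is exactly where work is needed. Under the identification $H_{\tilde X}\cong l^2(\Gamma)\otimes H_D$ one finds $L(T)=\sum_\delta u_{\delta}\otimes A_\delta$ while $T=\sum_\delta A_\delta$; comparing $\|\sum u_\delta\otimes A_\delta\|_{C^*_r(\Gamma)\otimes K}$ with $\|\sum A_\delta\|$ is not tautological and in particular is tied up with whether the trivial representation is weakly contained in the regular one.

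The paper avoids this by proving the dedicated Lemma~\ref{decp}: using the decomposition of \cite{GongWangYu}*{Lemma~3.4}, any $T$ of propagation $<\epsilon$ is written as $\sum_{i=1}^K T_i$ with $\|T_i\|\le\|T\|$ and $T_i^*T_i$ of propagation \emph{zero}. Propagation-zero equivariant operators lie in $l^\infty(\tilde D;K(H))^{\pi_1(X)}\cong l^\infty(D;K(H))$, a $C^*$-algebra on which the lift is a genuine $*$-isomorphism, so $\|L(T_i)\|_{\maxi}=\|T_i\|_{\maxi}$ and hence $\|L(T)\|_{\maxi}\le K\|T\|_{\maxi}$ directly, with no passage through the reduced norm. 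Your invocation of Proposition~\ref{prop:max_exists} uses the same Gong--Wang--Yu input, but the detour through reduced norms creates precisely the gap that Lemma~\ref{decp} is designed to close.
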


The proof of Proposition \ref{prop:lift_iso} is not trivial, as we have to
come to grips with the potentially different representations which enter the
definition of the maximal $C^*$-norms for $C^*_{\maxi}(X)$ and $C^*_{\maxi}(\tilde
X)^{\pi_1(X)}$. To do this, we use the following lemma. 

\begin{lemma} \label{decp}
 Let $\epsilon$ be as in Proposition~\ref{eps}. There exists $K \in \mathbb{N}$, such that for all $T \in \mathbb{R}(X)$ and
 $\tilde T\in \mathbb{R}(\tilde X)^{\pi_1(X)}$ with propagation less than $\epsilon$
 we have $||L(T)||_{C^*_{\maxi}(\tilde X)^{\pi_1(X)}} \leq K ||T||_{C^*_{\maxi}(X)}$ and $||\pi(\tilde T)||_{C^*_{\maxi}}(X)\le
 K ||\tilde T||_{C^*_{\maxi}(\tilde X)^{\pi_1(X)}}$.
\end{lemma}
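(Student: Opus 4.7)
The strategy is to reduce the comparison of maximal $C^*$-norms to a corresponding comparison of reduced norms, where one has concrete Hilbert-space operators and local triviality of the covering can be exploited. Proposition~\ref{prop:max_exists} is the bridge: since both $L(T)$ and $\pi(\tilde T)$ have propagation less than $\epsilon$ by construction, the proposition applied to $\mathbb{R}(\tilde X)^{\pi_1(X)}$ and to $\mathbb{R}(X)$ respectively furnishes constants $C_\epsilon^{\tilde X}, C_\epsilon^X$ with $\|L(T)\|_{\maxi}\le C_\epsilon^{\tilde X}\|L(T)\|_{\red}$ and $\|\pi(\tilde T)\|_{\maxi}\le C_\epsilon^X\|\pi(\tilde T)\|_{\red}$. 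Since the reduced norm is dominated by the maximal norm, it now suffices to produce a constant $K'$ (independent of $T,\tilde T$) such that $\|L(T)\|_{\red}\le K'\|T\|_{\red}$ and $\|\pi(\tilde T)\|_{\red}\le K'\|\tilde T\|_{\red}$; the lemma then follows with $K=\lceil K'\cdot\maxi(C_\epsilon^{\tilde X},C_\epsilon^X)\rceil$.

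The reduced-norm estimate exploits the choice of $\epsilon$ in Proposition~\ref{eps}: since $2\epsilon$-balls in $X$ are evenly covered, any two distinct sheets over an $\epsilon$-neighbourhood sit at mutual distance at least $\epsilon$, so no propagation-$\epsilon$ operator on $\tilde X$ has matrix entries linking different sheets. Pick a locally finite open cover $\{V_\alpha\}$ of $X$ by sets of diameter less than $\epsilon/2$ and a subordinate partition of unity $\{\psi_\alpha^2\}$ with $\sum_\alpha\psi_\alpha^2=1$, and set $\phi_\alpha:=\psi_\alpha\circ p$. The decomposition
\begin{equation*}
L(T)=\sum_{\alpha,\beta}\phi_\alpha L(T)\phi_\beta
\end{equation*}
receives contributions only from pairs with $d(V_\alpha,V_\beta)<\epsilon$; each such union $V_\alpha\cup V_\beta$ lies in a $2\epsilon$-ball of $X$ and is therefore evenly covered. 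The sheet-decomposition then identifies $\phi_\alpha L(T)\phi_\beta$ with the orthogonal direct sum over $\pi_1(X)$ of operators unitarily equivalent (via the local homeomorphism $p$) to $\psi_\alpha T\psi_\beta$, so $\|\phi_\alpha L(T)\phi_\beta\|_{\red}=\|\psi_\alpha T\psi_\beta\|_{\red}\le\|T\|_{\red}$. A Cotlar--Stein- or Schur-type summation, using that the cover has uniformly bounded overlap multiplicity by bounded geometry, yields $\|L(T)\|_{\red}\le K'\|T\|_{\red}$. The argument for $\pi(\tilde T)$ is symmetric: starting from the pulled-back cover of $\tilde X$, the push-down $\pi$ acts as $p_*$ sheet-by-sheet on each local piece, giving $\|\psi_\alpha\pi(\tilde T)\psi_\beta\|\le\|\phi_\alpha\tilde T\phi_\beta\|\le\|\tilde T\|_{\red}$ and summing as before.

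The main obstacle is the sheet-decomposition identification in the second step: one must verify that $\phi_\alpha L(T)\phi_\beta$ truly splits as an orthogonal direct sum of unitary conjugates of $\psi_\alpha T\psi_\beta$. This rests crucially on the choice of $\epsilon$ so that $2\epsilon$-balls are evenly covered, which simultaneously ensures that unions $V_\alpha\cup V_\beta$ of neighbouring cover elements lie in an evenly covered region and that a propagation-$\epsilon$ operator cannot mix sheets. Once this geometric input is in place, the remaining summation is routine bounded-geometry bookkeeping and yields the required uniform constant $K'$.
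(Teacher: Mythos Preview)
Your argument is correct, but the paper follows a different route. Instead of invoking Proposition~\ref{prop:max_exists} to pass from the maximal to the reduced norm and then running a partition-of-unity/Schur argument on the concrete Hilbert space, the paper transfers the problem to a uniformly discrete $c$-dense subset $D\subset\Gamma_X$ (and its preimage $\tilde D$) via the isomorphisms $C^*_{\maxi}(X)\cong C^*_{\maxi}(D)$ and $C^*_{\maxi}(\tilde X)^{\pi_1(X)}\cong C^*_{\maxi}(\tilde D)^{\pi_1(X)}$, and there applies directly the decomposition of \cite{GongWangYu}*{Lemma~3.4}: any $T$ of propagation $<\epsilon$ on the discrete model splits as $T=\sum_{i=1}^K T_i$ with $\|T_i\|\le\|T\|$ and $T_i^*T_i\in l^\infty(D;K(H))$ of propagation zero. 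On propagation-zero operators the lift is the $*$-isomorphism $l^\infty(D;K(H))\cong l^\infty(\tilde D;K(H))^{\pi_1(X)}$, so $\|L(T_i)\|^2=\|\widetilde{T_i^*T_i}\|=\|T_i^*T_i\|=\|T_i\|^2$ holds in \emph{every} $C^*$-norm, and summing gives $\|L(T)\|_{\maxi}\le K\|T\|_{\maxi}$ without an explicit max-to-reduced step. Both approaches rest on the same geometric input (even covering of $2\epsilon$-balls, bounded geometry) and both ultimately contain a coloring-type finiteness; yours is more hands-on at the Hilbert-space level and avoids the discrete model, while the paper's controls the maximal norm directly by reducing to a genuine $C^*$-subalgebra. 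One small slip in your write-up: with $\sum_\alpha\psi_\alpha^2=1$ you cannot decompose $L(T)=\sum_{\alpha,\beta}\phi_\alpha L(T)\phi_\beta$; either take $\sum_\alpha\psi_\alpha=1$ or insert squares.
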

\begin{proof}
 By assumption, $X$ has bounded geometry. Consequently, we can and do choose
 for some fixed $c>0$ a $c$-dense uniformly discrete subset $D$ of $\Gamma_X$
 and denote by
 $C^*_{\maxi}(D)$ and $C^*_{\maxi}(\tilde D)^{\pi_1(X)}$ the Roe algebras of $D$
 constructed using $l^2(D) \otimes H$ and $l^2(\tilde D) \otimes H$ as
 before. The proof of
 \cite{GongWangYu}*{Lemma 3.4} guarantees the existence of a $K\in \mathbb{N}$ 
 such that for all $T \in C^*_{\maxi}(D)$ with $\prop(T) < \epsilon$ there
 exist operators $T_{i\in\{1,\hdots,K\}} \in C^*_{\maxi}(D)$ such that $||T_i|| \leq ||T||$, 
 $T_i^*T_i \in l^\infty(D;K(H))$, i.e.~$T_i^*T_i$ are
 operators of propagation $0$, and such that $\sum{T_i} = T$. Moreover, the
 lift $\tilde T_i$ satisfies that $\tilde T_i^*\tilde T_i=\widetilde{T_i^*T_i}
 \in l^\infty(\tilde D; K(H))^{\pi_1(X)}\stackrel{L}\cong
 l^\infty(D;K(H))$. Hence the norm of
 $\tilde T_i^*\tilde T_i$ is exactly $||T_i||^2$.

 We thus have
 $||L(T)|| \leq K||T||$. With a completely analogous argument we get
 $||\pi(\tilde T)||\le K|| \tilde T||$.

 Note that there are isomorphisms
 $$C^*_{\maxi}(X) \rightarrow C^*_{\maxi}(D),$$
 $$C^*_{\maxi}(\tilde{X})^{\pi_1(X)} \rightarrow C^*_{\maxi}(\tilde D)^{\pi_1(X)}$$
 which can be constructed explicitly (compare \cite{GongWangYu}*{Section
   4.4}). These isomorphisms can be chosen so as to make the diagrams
 $$\begin{tikzcd}
    \mathbb{R}(\tilde{X})^{\pi_1(X)}_{\epsilon} \arrow{r}  &
    \mathbb{R}(\tilde D)^{\pi_1(X)}_{\epsilon}  \\
    \mathbb{R}(X)_\epsilon \arrow{u}{L} \arrow{r}  & \mathbb{R}(D)_{\epsilon} \arrow{u}{L}
  \end{tikzcd}\qquad
\begin{tikzcd}
    \mathbb{R}(\tilde{X})^{\pi_1(X)}_{\epsilon} \arrow{r}\arrow{d}{\pi}  &
    \mathbb{R}(\tilde D)^{\pi_1(X)}_{\epsilon}\arrow{d}{\pi}  \\
    \mathbb{R}(X)_\epsilon \arrow{r}  & \mathbb{R}(D)_{\epsilon} 
  \end{tikzcd}
  $$
 commute. Here the subscript $\epsilon$ means that we are only considering
 operators with propagation less than $\epsilon$. 

 The latter commutative diagrams complete the proof.
\end{proof}

\begin{proof}[Proof of Proposition \ref{prop:lift_iso}] 
  Recall that for $(\tilde T\colon s\to \tilde T_s)\in C_{L,\maxi}^*(\tilde X)^{\pi_1(X)}$ we use
  the supremum norm: $||\tilde T|| = \sup_{s\in [1,\infty)}||\tilde T_s||$. It
  follows that the norm of the image of $\tilde T$ in $C_{L,\maxi}^*(\tilde X
  )^{\pi_1(X)}/C_0([1,\infty);C^*(\tilde X)^{\pi_1(X)})$ under the projection map is $||[\tilde T]|| =
  \limsup_{s\in[1,\infty)} ||\tilde T_s||$ (specifically, multiplication of
  $\tilde T$ with a cutoff function $\rho\colon [1,\infty)\to [0,1]$ which
  vanishes on $[1,R]$ and is identically $1$ on $[R+1,\infty)$ produces
  representative of $[\tilde T]$ whose norm in $C^*_L(\tilde X)^{\pi_1(X)}$
  approaches $\limsup_{s\in [1,\infty)} ||\tilde T_s||$ as $R\to\infty$).

  The assertion then follows immediately from Lemma \ref{decp}.
\end{proof}

Until the end of Section 2.5 we are going to suppose that $X$ is compact and
that $Y$ is a closed subset of $X$. 
Recall that $\varphi$ denotes the map $\pi_1(Y) \rightarrow \pi_1(X)$ induced by
the inclusion. Following the notation introduced in \cite{CWY}*{Section 2}, we
denote 
by $Y^\prime$ the set $p^{-1}(Y)$ and by $p^{\prime \prime}\colon Y^{\prime \prime} \rightarrow Y$ the covering of $Y$ associated to the
subgroup $\kernel \varphi$; hence, $Y^\prime = \pi_1(X) \times_{\pi_1(Y)/\kernel \varphi}Y^{\prime \prime}$. Now construct the equivariant
Roe and localisation algebras for
$Y^\prime$ and $Y^{\prime \prime}$ using the sets
$p^{-1}(\Gamma_Y)$ and $(p^{\prime \prime})^{-1}(\Gamma_Y)$ similarly as before. 
\begin{theorem}[\cite{CWY}*{Lemma 2.12}]
 There is a map
 $$\psi^{\prime \prime}\colon C^*_{\maxi}(\tilde{Y})^{\pi_1(Y)} \rightarrow C^*_{\maxi}(Y^{\prime \prime})^{\pi_1(Y)/\kernel \varphi}$$
 with the property that there exists $\epsilon > 0$ such that given an operator $T \in C^*_{\maxi}(\tilde{Y})^{\pi_1(Y)}$ with
 $\prop(T) < \epsilon$ and kernel
 $k$ on $(p^\prime)^{-1}(\Gamma_Y)$ the pushdown of $k$ gives a unique well-defined kernel $k_Y$ on $\Gamma_Y$ and $\psi^{\prime \prime}(T)$ is given by
 the kernel $(x,y) \mapsto k_Y(p^{\prime \prime}(x),p^{\prime \prime}(y))$ for $x,y \in Y^{\prime \prime}$ with $d_{Y^{\prime \prime}}(x,y) < \epsilon$.
 \label{difffund}
\end{theorem}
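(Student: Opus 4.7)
My plan is to define $\psi''$ at the $*$-algebra level for operators of arbitrary finite propagation by an explicit kernel sum, and only then extend to the maximal completions via the universal property of the maximal $C^*$-norm; the explicit formula in the theorem statement drops out when the propagation is small enough that the sum has at most one nonzero term.

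Let $q\colon \tilde Y \to Y'' = \tilde Y/\ker\varphi$ denote the intermediate covering, so that $p''\circ q = p'$. For $T \in \mathbb{R}(\tilde Y)^{\pi_1(Y)}$ represented by a kernel $k$ on $\Gamma_{\tilde Y}\times\Gamma_{\tilde Y}$, and for $a,b \in \Gamma_{Y''}$, choose any lift $\tilde a \in q^{-1}(a)$ and set
\[
  k''(a,b) := \sum_{\tilde b \in q^{-1}(b)} k(\tilde a, \tilde b).
\]
The sum is finite by finite propagation of $k$ combined with bounded geometry of $\tilde Y$. Independence of the choice of $\tilde a$ follows from $\ker\varphi$-equivariance of $k$ (which is part of the full $\pi_1(Y)$-equivariance) by a one-line change of variable. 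Finite propagation, compactness of each matrix entry (a finite sum of elements of $K(H)$), and $(\pi_1(Y)/\ker\varphi)$-equivariance of the resulting operator are immediate. Multiplicativity $\psi''(TS) = \psi''(T)\psi''(S)$ reduces to interchanging two finite sums combined with the choice-of-lift invariance just established; the $*$-relation is trivial. This yields a $*$-homomorphism $\psi''\colon \mathbb{R}(\tilde Y)^{\pi_1(Y)} \to \mathbb{R}(Y'')^{\pi_1(Y)/\ker\varphi}$.

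To extend to the maximal completions, compose $\psi''$ with a faithful $*$-representation of $C^*_{\maxi}(Y'')^{\pi_1(Y)/\ker\varphi}$. The composite is a Hilbert space $*$-representation of $\mathbb{R}(\tilde Y)^{\pi_1(Y)}$, hence bounded in the maximal norm by the very definition of that norm, and $\psi''$ therefore extends uniquely to a $*$-homomorphism $C^*_{\maxi}(\tilde Y)^{\pi_1(Y)} \to C^*_{\maxi}(Y'')^{\pi_1(Y)/\ker\varphi}$. Finally, choose $\epsilon>0$ so that $q$ is trivial on balls of radius $2\epsilon$: for $\prop(T) < \epsilon$ and $d_{Y''}(a,b) < \epsilon$, exactly one lift $\tilde b \in q^{-1}(b)$ is within distance $\epsilon$ of $\tilde a$ and contributes a nonzero term, which is precisely $k_Y(p''(a), p''(b))$ for the push-down $k_Y$ of $k$ to $Y$ in the sense of Proposition~\ref{eps}.

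The main conceptual hurdle is the extension to maximal completions: operators of propagation $<\epsilon$ are \emph{not} norm-dense in $C^*_{\maxi}(\tilde Y)^{\pi_1(Y)}$, so one cannot simply define $\psi''$ on the small-propagation subalgebra and extend by continuity; Proposition~\ref{eps} is not available directly at the $C^*$-level. The device of defining $\psi''$ combinatorially at all propagations on the $*$-algebra level, and then leveraging the universal property of the maximal $C^*$-norm, bypasses this obstruction. All other verifications are finite-combinatorial checks enabled by bounded geometry and $\pi_1(Y)$-equivariance.
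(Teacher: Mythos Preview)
Your proposal is correct and follows essentially the same approach as the paper. The paper's proof (given in the remark immediately following the theorem) is: define the partial push-down as a $*$-homomorphism $\mathbb{R}(\tilde Y)^{\pi_1(Y)}\to \mathbb{R}(Y'')^{\pi_1(Y)/\ker\varphi}$ at the algebraic level, then invoke maximality of the norms to extend; you have supplied the explicit kernel-sum formula and the routine verifications that the paper leaves implicit, and you have correctly identified why one must work on the full algebraic Roe algebra rather than only on small-propagation operators.
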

\begin{remark}
 It can be observed from the proof of Theorem \ref{difffund}, that the result can be generalised to obtain a map $C^*_{\maxi}(Z)^\Gamma \rightarrow C^*_{\maxi}(Z/N)^{\Gamma/N}$, where $Z$ is a bounded geometry space satisfying the properties mentioned in the beginning of the paper, $\Gamma$ is a discrete group acting freely and properly on $Z$ via isometries, $N\subset \Gamma$ is a normal subgroup and there exists an $\epsilon$ such that the coverings $Z \rightarrow Z/N^\prime$ are trivial when restricted to $\epsilon$-balls for any normal subgroup $N^{\prime} \subset \Gamma$.
\end{remark}

\begin{remark}
 For the proof of Theorem~\ref{difffund}, Chang, Weinberger and Yu use that
 the push-down of operators with small propagation as defined in Definition \ref{def:loc_lift} can be extended to an honest $*$-homomorphism. Doing it partially
 gives a morphism of $*$-algebras 
 $\psi^{\prime \prime}\colon \mathbb{R}(\tilde{Y})^{\pi_1(Y)} \rightarrow \mathbb{R}(Y^{\prime \prime})^{\pi_1(Y)/\kernel \varphi}$ and then
 maximality of the norms
 provides the extension to the desired $C^*$-homomorphism $C^*_{\maxi}(\tilde
 Y)^{\pi_1(Y)}\to C^*_{\maxi}(Y'')^{\pi_1(Y)/\ker \varphi}$. Note that, in general, this is not possible
 if we use the reduced equivariant Roe algebras.
\end{remark}

Using $Y^\prime = Y^{\prime \prime} \times_{\pi_1(Y)/\kernel \varphi} \pi_1(X)$, we get a $C^*$-algebra morphism
\begin{equation*}
\psi^\prime\colon C^*_{\maxi}(Y^{\prime \prime})^{\pi_1(Y)/\kernel \varphi} \rightarrow
C^*_{\maxi}(Y^\prime)^{\pi_1(X)} \subset C^*_{\maxi}(\tilde{X})^{\pi_1(X)}
\end{equation*}
where the first map repeats the operators on the different copies of $Y''$
inside $Y'$.
Composing $\psi^\prime$ and $\psi^{\prime \prime}$ we obtain the map
$\psi\colon C^*_{\maxi}(\tilde{Y})^{\pi_1(Y)} \rightarrow
C^*_{\maxi}(\tilde{X})^{\pi_1(X)}$. Application of the maps pointwise defines the
corresponding maps for localisation algebras, which we denote with the same
symbols with subscript $L$. 

\begin{theorem}\label{prop:Lift_compatible_with_Y_to_X}
  The constructions just described fit into the following commutative diagram
  of $C^*$-algebras, where the composition in the third row is the map
  $\psi_L$, in the forth row is $\psi$, and in the last row is $\varphi$. The
  projection maps in the second row of vertical maps are K-theory isomorphism.
  The last vertical maps induce the canonical isomorphism in K-theory of
  Theorem \ref{eg}. The Roe and localisation algebras are constructed using
  the maximal completion (we write $\Max$ instead of $\max$ for typesetting reasons).
{\tiny  \begin{equation*}
    \begin{CD}
      C^*_{L,\Max}(Y) @= C^*_{L,\Max}(Y) @= C^*_{L,\Max}(Y) @>\iota>> C^*_{L,\Max}(X)\\
      @VV{L}V @VV{L}V @VV{L}V @VV{L}V\\
    \frac{C^*_{L,\Max}(\tilde Y)^{\pi_1(Y)}}{C_0^*(\tilde Y)^{\pi_1(Y)}}
      @>{\psi''_L}>>    \frac{C^*_{L,\Max}( Y'')^{\pi_1(Y)/\ker \varphi}}{C_0^*(Y'')^{\pi_1(Y)/\ker\varphi}}  @>{\psi'_L}>>    \frac{C^*_{L,\Max}(
          Y')^{\pi_1(x)}}{C_0^*(Y')^{\pi_1(x)}} @>{\subset}>>
          \frac{C^*_{L,\Max}(\tilde X)^{\pi_1(X)}}{C_0^*(\tilde
            X)^{\pi_1(X)}} \\
            @AAA @AAA @AAA @AAA\\
         C^*_{L,\Max}(\tilde Y)^{\pi_1(Y)} @>{\psi''_L}>>
         C^*_{L,\Max}(Y'')^{\pi_1(Y)/\ker\varphi} @>{\psi'_L}>> C^*_{L,\Max}(Y')^{\pi_1(X)}
         @>{\subset}>> C^*_{L,\Max}(\tilde X)^{\pi_1(X)} \\
         @VV{\ev_1}V        @VV{\ev_1}V       @VV{\ev_1}V       @VV{\ev_1}V\\
         C^*_{\Max}(\tilde Y)^{\pi_1(Y)} @>{\psi''}>>
         C^*_{\Max}(Y'')^{\pi_1(Y)/\ker\varphi} @>{\psi'}>> C^*_{\Max}(Y')^{\pi_1(X)}
         @>{\subset}>> C^*_{\Max}(\tilde X)^{\pi_1(X)}\\
         @AAA @AAA @AAA @AAA\\
         C_{\Max}^*(\pi_1(Y)) @>{pr_*}>> C_{\Max}^*(\pi_1(Y)/\ker\varphi) @>>>
         C_{\Max}^*(\pi_1(X)) @= C_{\Max}^*(\pi_1(X)).
    \end{CD}
  \end{equation*}
}
\end{theorem}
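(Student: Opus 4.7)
My plan is to verify the twelve squares one row of vertical maps at a time. In every case, since all horizontal maps are continuous $*$-homomorphisms and since the algebraic localisation and Roe algebras are dense in their $C^*$-completions, it suffices to check commutativity on representatives of finite propagation --- and, for squares involving the lift $L$, on those whose propagation is eventually less than the $\epsilon$ of Proposition \ref{eps}.

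For the top row of squares (rows 1 to 2), fix $T\in\mathbb{R}_L(Y)$ with $\prop(T_s)<\epsilon$ for $s\ge s_T$. Square 1 reduces to the statement that the $Y''$-lift of a kernel on $Y$ is obtained by first lifting to $\tilde Y$ and then pushing down via $\psi''$; this is immediate from the kernel description in Proposition \ref{eps} and Theorem \ref{difffund}, using that lift and push-down are mutually inverse on small-propagation kernels. Square 2 uses the identification $Y' = \pi_1(X)\times_{\pi_1(Y)/\ker\varphi} Y''$: the $\pi_1(X)$-equivariant $Y'$-lift is by construction the operator which on each coset copy of $Y''$ agrees with the $Y''$-lift, which is precisely the definition of $\psi'$. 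Square 3 uses that the extension-by-zero $\iota(T_s)$ has its $\tilde X$-lift supported in $Y' = p^{-1}(Y)$ and matching the $Y'$-lift under inclusion.

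The second row of squares commutes because each horizontal map preserves the ideal $C_0^*$ of functions vanishing at infinity, so projection to the quotients is natural; that these projections are K-theory isomorphisms is already established in Proposition \ref{prop:lift_iso}. The third row of squares commutes because $\psi''_L$, $\psi'_L$, and the inclusion are defined pointwise in $s$ from their non-localised counterparts, and hence commute with $\ev_1$ by construction.

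The main technical obstacle is the bottom row, which invokes the isomorphism of Theorem \ref{eg}. Unwinding that isomorphism from \cite{GongWangYu}, a group element $\gamma$ is represented by the translation operator attached to a fixed rank-one projection in a chosen fundamental domain. The map $\psi''$ quotients out the action of $\ker\varphi$ on kernels, while $\psi'$ distributes an operator along a transversal of $\pi_1(X)/\ker\varphi$-cosets; under the identifications these correspond exactly to the group homomorphisms $\pi_1(Y)\to\pi_1(Y)/\ker\varphi\to\pi_1(X)$ whose composition is $\varphi$. Since these generators are dense and all maps are continuous, the bottom squares commute, completing the verification.
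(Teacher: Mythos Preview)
Your approach is essentially the paper's: verify commutativity on small-propagation representatives for the upper portion and then treat the bottom row separately via the explicit description of the isomorphism of Theorem~\ref{eg}. The handling of the first three rows of squares matches the paper's argument (which dispatches them in one sentence).

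There is, however, a point in your treatment of the bottom row that deserves more care. The vertical arrows $C^*_{\max}(\pi_1(\cdot))\to C^*(\tilde{\cdot})^{\pi_1(\cdot)}$ are \emph{not canonical $C^*$-algebra maps}: each depends on a choice of fundamental domain and of a rank-one projection, and different choices give genuinely different homomorphisms (agreeing only after passing to K-theory). The theorem is asserting commutativity of a diagram of $C^*$-algebras, so one must \emph{construct} these four vertical maps compatibly. The paper does this by fixing a single dense subset $D$ of a fundamental domain for $\tilde Y$, transporting it to $Y''$, to $Y'$, and finally extending it inside a fundamental domain for $\tilde X$; the same rank-one projection $p_{x_0}\otimes p_1$ is then used throughout. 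With these coherent choices the commutativity check becomes the tautology you describe. Your phrase ``a fixed rank-one projection in a chosen fundamental domain'' gestures at this, but as written it is not clear you recognise that the four vertical maps must be built from the \emph{same} $D$ and $p$ rather than assembled independently and compared afterwards. Making this explicit closes the gap.
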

\begin{proof}
  If in the first row $C^*_L(Y)$ is replaced by $\reals_L(Y)$, then the definition of $L$, the behaviour of the push-down map $\psi''$ and the
  (trivial) lifting map $\psi'$ on operators of small propagation and the definition of $\iota$ and $\subset$ imply the commutativity of the first two rows of the diagram. The continuity of the involved maps then implies the commutativity of the first two rows. In order to show the commutativity of the last two rows we recall the isomorphisms
  $K_*(C^*_{\max}(\pi_1(\cdot))) \rightarrow
  K_*(C^*_{\maxi}(\tilde{\cdot})^{\pi_1(\cdot)})$. For this we need the isomorphisms 
  $C^*_{\max}(\pi_1(\cdot)) \otimes K(H)\xrightarrow{\iso}C^*_{\maxi}(\tilde{\cdot})^{\pi_1(\cdot)}$. Here we modify the proof of
 \cite{HR}*{Lemma 12.5.3} slightly to suit our choice of the representation space. Choose a countable dense subset $D$ of the fundamental domain of
 $\tilde{Y}$ such that $D$ and $gD$ are disjoint for $g \neq e$ in $\pi_1(Y)$. With $\Gamma_{\tilde{Y}} = \bigsqcup_{g\in \pi_1(Y)}gD$, we get an
 isomorphism $l^2(\Gamma_{\tilde{Y}}) \otimes l^2(\mathbb{N}) \cong l^2(\pi_1(Y)) \otimes (\underset{n \in \mathbb{N}}{\oplus}l^2(D)).$ Using this
 isomorphism we then obtain a $*$-isomorphism between $\mathbb{C}[\pi_1(Y)] \otimes K(\underset{n \in \mathbb{N}}{\oplus}l^2(D))$ and the algebra of
 invariant, finite propagation and locally compact operators. This induces the
 desired isomorphism
$C^*_{\max}(\pi_1(Y)) \otimes K(\underset{n \in \mathbb{N}}{\oplus}l^2(D)) \xrightarrow{\iso} C^*_{\maxi}(\tilde{Y})^{\pi_1(Y)}$. Furthermore we note (see
\cite{RLL}*{Proposition 6.4.1 and Proposition 8.2.8}) that the standard
isomorphisms $K_p(A) \rightarrow K_p(A \otimes K(H))$ for a $C^*$-algebra $A$
and a separable 
infinite dimensional Hilbert space $H$ is induced by the morphism $a \mapsto a
\otimes p$, with $p$ a rank one projection. Now consider the
rank one projection $p_{x_0} \otimes p_1$ on $\underset{n \in \mathbb{N}}{\oplus}l^2(D) \cong l^2(D) \otimes l^2(\mathbb{N})$ for some $x_0 \in D$
and $p_1$ the operator on $l^2(\mathbb{N})$ projecting to the first
component. The composition gives the desired map $C^*_{\maxi}(\pi_1(Y))
\rightarrow C^*_{\maxi}(\tilde{Y})^{\pi_1(Y)}$ which induces the $K$-theory
isomorphism of Theorem~\ref{eg}. We can perform the same procedure for
$Y^{\prime \prime} = \tilde{Y}/(\kernel \varphi)$. Considering the above $D$ (or
rather its image under $\tilde Y\to Y''$) as a subset of $Y^{\prime \prime}$
and using
$\Gamma_{Y^{\prime \prime}} = \bigsqcup_{g\in \frac{\pi_1(Y)}{\kernel
    \varphi}}gD$, we get the corresponding isomorphism
$l^2(\Gamma_{Y^{\prime \prime}}) \otimes l^2(\mathbb{N}) \cong l^2(\frac{\pi_1(Y)}{\kernel \varphi})
\otimes (\underset{n \in \mathbb{N}}{\oplus}l^2(D)).$
Choosing the same $p$ as above our procedure defines the desired
$C^*_{\maxi}(\frac{\pi_1(Y)}{\kernel \varphi}) \rightarrow C^*_{\maxi}(Y^{\prime
  \prime})^{\frac{\pi_1(Y)}{\kernel \varphi}}$ which is a K-theory isomorphism
and which makes the lower left corner of the diagram of Theorem
\ref{prop:Lift_compatible_with_Y_to_X} commutative.
Similarly we construct the corresponding map for $Y^{\prime}$, which is the associated bundle to $Y^{\prime \prime}$ with
fibre $\pi_1(X)$ (we can consider the above $D$ as a subset of
$Y^{\prime}$). The construction gives rise to the morphism
$C^*_{\maxi}(\pi_1(X))\to C^*_{\maxi}(Y')^{\pi_1(X)}$ which is a K-theory isomorphism
and which makes the lower middle square of the diagram of Theorem
\ref{prop:Lift_compatible_with_Y_to_X} commutative. 
Finally, considering $D$ as a subset of $Y^{\prime}$ and extending it to a dense subset
of a fundamental domain of $\tilde{X}$, we obtain, similarly as above,
 a corresponding map for $\tilde{X}$, the morphism $C^*_{\max}(\pi_1(X))\to
 C^*_{\maxi}(\tilde X)^{\pi_1(X)}$ which is a K-theory isomorphism such that also the
 lower  right corner of the diagram of Theorem
 \ref{prop:Lift_compatible_with_Y_to_X} commutes. This finishes the proof of the
 said Theorem.
\end{proof}

\begin{definition}\label{def:relative_index}
  The commutative diagram of Theorem
  \ref{prop:Lift_compatible_with_Y_to_X} defines a zig-zag of maps between the
  mapping cones of the compositions of the maps from left to right. Using in
  addition that the two wrong way vertical maps induce isomorphisms
  in K-theory, we obtain the map
  \begin{equation*}
  \mu \colon  K_*( SC_{\iota(X,Y)}) \to
  K_*(SC_{\varphi})\stackrel{\text{Def}}{=} K_*(C^*_{\max}(\pi_1(X),\pi_1(Y))),
  \end{equation*}
  which we call the \emph{relative index map}. In \cite{CWY} it is called the
  \emph{maximal relative Baum-Connes map}.
\end{definition}
\section{A Geometric and Functorial Completion of the Equivariant Roe Algebra}
\label{sec:quotient_completion}

\subsection{Maximal Roe Algebra and Functions of the Dirac Operator}

\label{sec:max_Roe_Dirac}
Before
describing our geometric completion of the algebraic Roe algebra, we discuss
issues arising in coarse index theory when one uses maximal completions of the
relevant $C^*$-algebras, which lead to gaps in \cite{CWY}. 
A crucial role in coarse index theory is played by functions of the Dirac
operator (via functional calculus). If we work with the usual (reduced) Roe
algebras, the latter are defined as algebras of bounded operators on
$L^2$-spinors, and the Dirac operator is an unbounded operator on the same
Hilbert space. Ellipticity and finite propagation of the wave operator then
are used to show that certain functions of the Dirac
operator satisfy the defining conditions for the reduced Roe
algebra and of the reduced structure algebra.

However, if one uses the maximal versions this is highly non-trivial:
\begin{enumerate}
\item The functions $f(D)$ which do have finite propagation are by the
  very definition elements of the algebraic Roe algebra (if $f$ vanishes at
  infinity) or of the algebraic structure algebra (if $f$ is a normalising
  function). The wave operators $e^{itD}$ are bounded multipliers of the
  maximal Roe 
  algebra and by Lemma \ref{absext} act as bounded operators on the defining
  representation of the 
  maximal Roe algebra.
\item However, it is not obvious at all that the one parameter group
  $t\mapsto e^{itD}$ is strongly continuous on any Hilbert space on which the maximal Roe algebra is represented faithfully, i.e.~is obtained from an (unbounded) self-adjoint
  operator $D$ on such a Hilbert space. Thus one needs to have a reasonable
  definition of $f(D)$ in the maximal Roe and structure algebra for $f$
  without a compactly supported Fourier transform.
\item Even if one manages to construct the self-adjoint unbounded operator $D$
  on the maximal representation, it remains to show that this maximal
  Dirac operator is invertible if the underlying manifold has uniformly
  positive scalar curvature: one has to make sense also of a (geometric)
  Schr\"odinger-Lichnerowicz formula for this non-geometric representation?
\end{enumerate}

Chang, Weinberger, and Yu's article \cite{CWY} takes all these necessary
constructions and properties 
for granted, without any justification. We propose a way around by passing to
a slightly different and much more convenient completion. Later, Guo, Xie, and
Yu posted the preprint \cite{GuoXieYu} where they also identify these gaps in
\cite{CWY} and  propose positive answers to the above questions.

\subsection{The Quotient Completion}
Our suggestion to overcome the problems addressed in Section
\ref{sec:max_Roe_Dirac} is to work with another functorial completion of the
equivariant Roe 
algebra which is more geometric.
We are studying the case that a group $\Gamma$ acts freely and
properly discontinuously by isometries on a proper metric space $X$.

For every normal subgroup $N\subset \Gamma$ we then can form the metric space
$X/N$ on which the quotient group $Q:=\Gamma/N$ acts as before. Indeed,
typically we obtain $X$ as a $\Gamma$-covering of a space $X/\Gamma$ and the
$X/N$ are then other normal coverings of $X/\Gamma$.

In the usual way, the purely algebraically defined algebras $\mathbb{R}(X)^\Gamma$ and
$\mathbb{S}(X)^\Gamma$ act via their images in $\mathbb{R}(X/N)^{\Gamma/N}$
and $\mathbb{S}(X/N)^{\Gamma/N}$ on all these quotients (see Theorem \ref{difffund}), and we complete with
respect to all these norms at once. Denote the corresponding completions by
$C^*_q(X)^\Gamma$ and $D^*_q(X)^\Gamma$. It is clear that the former is an
ideal in the latter. It is also clear that this has the usual functoriality
properties for $\Gamma$-equivariant maps for fixed $\Gamma$, but now in
addition is functorial (this is built in) for the quotient maps $X\to X/N$,
giving $C_q^*(X)^\Gamma\to C_q^*(X/N)^{\Gamma/N}$ and $D_q^*(X)^\Gamma\to
D_q^*(X/N)^{\Gamma/N}$.

Finally, for inclusion of groups $\iota\colon \Gamma\to G$ 
induces an \emph{induction map} $C_q^*(X)^\Gamma\to C_q^*(X\times_\Gamma G)^G$,
because for every quotient $G/N$ we get the associated induction
$$\mathbb{R}(X/{\Gamma\cap N})^{\Gamma/(\Gamma\cap N)} \to \mathbb{R}(X/(\Gamma\cap
N)\times_{\Gamma/\Gamma\cap N}G/N)^{G/N}=\mathbb{R}(X\times_\Gamma
G/N)^{G/N}.$$
The corresponding construction works for $D^*_q$ and for the localisation
algebras.

Putting this together, we get the expected functoriality of $C^*_q$ and
$D^*_q$ and the localisation algebras for maps equivariant for any homomorphism $\alpha\colon\Gamma_1\to \Gamma_2$.

\begin{lemma}
  Suppose $\Gamma$ acts cocompactly on $X$. Then $C^*_q(X)^\Gamma$ is isomorphic to
  $C^*_q(\Gamma)\otimes K(H)$. Here, $C^*_q(\Gamma)$ is the $C^*$-completion
  of $\mathbb{C}[\Gamma]$ in the representation 
  $\bigoplus_{N\normalsubgroup\Gamma}l^2(\Gamma/N)$, where the sum is over all normal
  subgroups $N$ of $\Gamma$.
\end{lemma}
\begin{proof}
  The proof is precisely along the lines of the one of Theorem \ref{eg}. 
\end{proof}

\begin{proposition}
  Let $X/\Gamma$ be a complete Riemannian spin manifold with $\Gamma$-covering
  $X$. The Dirac operator on the different normal coverings $X/N$ for the
  normal subgroups $N$ of $\Gamma$ gives rise to a self-adjoint unbounded
  operator in the defining representation of $C^*_q(X)^\Gamma$. If $f\in
  C_0(\reals)$ we get $f(D)\in C^*_q(X)^\Gamma$, if $\Psi\colon
  \reals\to [-1,1]$ is a normalising function, we get $\Psi(D)\in
  D^*_q(X)^\Gamma$.

  This construction is functorial for the quotient maps $X\to X/N$ for
  normal subgroups $N\normalsubgroup\Gamma$.

  The Schr\"odinger-Lichnerowicz argument applies: if $X/\Gamma$ has uniformly
  positive scalar curvature then the spectrum of the operator $D$ in the
  defining representation of $C^*_q(X)^\Gamma$ does not contain $0$.

  Let $A\subset X$ be a $\Gamma$-invariant measurable subset. Then $\chi_A$,
  the operator of multiplication with the characteristic function of $A$ is an
  element of $D^*_q(X)^\Gamma$, in particular a multiplier of
  $C^*_q(X)^\Gamma$. Under the quotient map $X\to X/N$ for a normal subgroup
  $N\normalsubgroup \Gamma$ it is mapped to $\chi_{A/N}$. Similarly, a function
  of the Dirac operator on $X$ is mapped to the same function of the Dirac
  operator on $X/N$.
\end{proposition}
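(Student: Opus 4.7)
My plan is to unpack the defining representation of $C^*_q(X)^\Gamma$: by construction it is the Hilbert space direct sum $H_q := \bigoplus_{N \normalsubgroup \Gamma} H_N$, where $H_N$ is the ample representation of $C_0(X/N)$ used in the definition of $\mathbb{R}(X/N)^{\Gamma/N}$. In the spin setting I would take $H_N$ to be the space of $L^2$-spinors on $X/N$. Each quotient $X/N$ inherits completeness from $X/\Gamma$, so the Dirac operator $D_N$ on $X/N$ is essentially self-adjoint on smooth compactly supported sections, and I would simply assemble the direct sum $D := \bigoplus_N D_N$ as a self-adjoint unbounded operator on $H_q$.

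For $f \in C_c^\infty(\reals)$ with compactly supported Fourier transform, finite propagation speed of the wave operator combined with Fourier inversion $f(D_X) = \frac{1}{2\pi}\int \hat f(t)\,e^{itD_X}\,dt$ shows that $f(D_X)$ has finite propagation; ellipticity of $D_X$ yields local compactness, so $f(D_X) \in \mathbb{R}(X)^\Gamma$. Under the projection $X \to X/N$ it pushes down to $f(D_N)$, and since each $D_N$ is self-adjoint the spectral theorem gives $\|f(D_N)\|_{L(H_N)} \le \|f\|_\infty$, hence
\begin{equation*}
  \|f(D_X)\|_q \;=\; \sup_{N \normalsubgroup \Gamma} \|f(D_N)\|_{L(H_N)} \;\le\; \|f\|_\infty.
\end{equation*}
For arbitrary $f \in C_0(\reals)$ I would approximate uniformly by $f_n$ with compactly supported Fourier transform; the displayed inequality forces $(f_n(D_X))_n$ to be Cauchy in $C^*_q(X)^\Gamma$, and its limit acts as $f(D_N)$ on each summand by spectral-theoretic continuity, defining $f(D) \in C^*_q(X)^\Gamma$. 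For a normalising function $\Psi$ I would decompose $\Psi = \Psi_0 + g$ with $\Psi_0$ of compactly supported Fourier transform and $g \in C_0(\reals)$; finite propagation of $\Psi_0(D_X)$ is immediate and pseudolocality follows from the standard elliptic-regularity argument that $[\Psi_0(D_X), h]$ is locally compact for $h \in C_0(X)$, so $\Psi_0(D_X) \in \mathbb{S}(X)^\Gamma$, and therefore $\Psi(D) = \Psi_0(D) + g(D) \in D^*_q(X)^\Gamma$.

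Functoriality under the quotient $X \to X/M$ for $M \normalsubgroup \Gamma$ is built into the construction: the defining representation of $C^*_q(X/M)^{\Gamma/M}$ is the subsum of $H_q$ indexed by normal subgroups of $\Gamma$ containing $M$, and the induced morphism is the corresponding compression, so $f(D)$, $\Psi(D)$ and $\chi_A$, which are defined summand-wise as push-downs, map to the analogous objects on $X/M$ by construction. For Schrödinger-Lichnerowicz, the uniform lower bound $\kappa \ge c_0 > 0$ on the scalar curvature of $X/\Gamma$ pulls back to every $X/N$ because the projections are local isometries, and the Lichnerowicz identity $D_N^2 = \nabla^*\nabla + \kappa/4$ then yields $\mathrm{spec}(D_N) \cap (-\sqrt{c_0}/2, \sqrt{c_0}/2) = \emptyset$ \emph{uniformly in $N$}; the spectrum of $D$ on $H_q$ is the closure of the union of these, so it also avoids this fixed neighbourhood of $0$. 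For a $\Gamma$-invariant Borel $A \subset X$, extending $\rho$ to Borel functions via the spectral measure makes $\chi_A$ a bounded, zero-propagation, $\Gamma$-equivariant operator on $H_X$ which is pseudolocal (it commutes with $C_0(X)$ as a multiplication operator), hence $\chi_A \in \mathbb{S}(X)^\Gamma \subset D^*_q(X)^\Gamma$; by construction its push-down on $H_{X/N}$ is multiplication by $\chi_{A/N}$.

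The main technical obstacle is the uniformity of the norm bound $\|f(D_X)\|_q \le \|f\|_\infty$ across the family $\{D_N\}_{N \normalsubgroup \Gamma}$, which is precisely what allows the approximation argument to converge in the $q$-norm. This uniformity is unproblematic here because each $D_N$ lives on its own \emph{geometric} Hilbert space of $L^2$-spinors where self-adjointness and real spectrum are automatic, in sharp contrast to the maximal completion discussed in Section \ref{sec:max_Roe_Dirac}, where one would need analogous control on arbitrary $*$-representations of $\mathbb{R}(X)^\Gamma$. This is exactly why the $q$-completion is the appropriate compromise for the Dirac operator calculus used in the remainder of the paper.
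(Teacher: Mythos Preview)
Your proof is correct and follows essentially the same approach as the paper, which merely states that the Dirac operator claims follow by applying the usual arguments to all quotients $X/N$ simultaneously (invoking Lemma~\ref{relind}) and that the $\chi_A$ statement is immediate from the definitions. You have supplied precisely the details the paper omits: the direct-sum description of the defining representation, the uniform bound $\sup_N\|f(D_N)\|\le\|f\|_\infty$ from self-adjointness on each geometric summand, the approximation by functions with compactly supported Fourier transform, the uniform spectral gap from Lichnerowicz, and the correspondence-theorem identification of the defining representation of $C^*_q(X/M)^{\Gamma/M}$ with a subsum of $H_q$.
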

\begin{proof}
  The statements about the Dirac operator are just an application of the usual
  arguments to all the quotients $X/N$ simultaneously, using Lemma \ref{relind}.

  The statement about $\chi_A$ is a direct consequence of the definitions.
\end{proof}

\begin{remark}
 We note that all the statements in Section~\ref{sec:maxroe} have a counterpart when we use the quotient completion instead of the maximal
 completion of the equivariant algebras and their proofs are completely
 analogous to (and often easier than) the proofs for the maximal completions. In particular, we
 have a relative index map in this case. Furthermore we would like to emphasise that Theorem~\ref{theo:abs_vers_rel} holds for the quotient completion.
 Given a map
 $\phi\colon \Gamma \rightarrow \pi$, we get by functoriality a morphism $\phi: C_q^*(\Gamma) \rightarrow C_q^*(\pi)$, and $C_q^*(\pi,\Gamma)$ will denote
 $SC_\phi$ as before.
\end{remark}

\section{Higher Indices of Dirac Operators on Manifolds with Boundary} \label{relindd}

\subsection{Construction of the Relative Index}

Throughout this section, we consider only even dimensional spin manifolds.
We define the relative index of the Dirac operator of a
manifold $M$ with boundary $N$ in the following groups: 
\begin{itemize}
 \item in $C^*_{\maxi}(\pi_1(M),\pi_1(N))$,
 \item in $C^*_q(\pi_1(M),\pi_1(N))$ and
 \item in $C^*_{\red}(\pi_1(M),\pi_1(N))$ if $\pi_1(N)\to \pi_1(M)$ is injective.
\end{itemize}
In what follows the subscript $d$ stands for one of the mentioned
completions.
Before defining the relative index of the Dirac operator on a manifold with
boundary, we recall the explicit image of the fundamental class under the local
index map.
Given a complete Riemannian spin manifold $X$ with a free and proper action
of $\Gamma$ by isometries, denote by $D_{X}$ the Dirac operator on $X$. Let $\Psi_t$ be a 
sup-norm continuous family of normalising functions, i.e.\ each $\Psi_t$ is an
odd, smooth
function $\Psi_t\colon \mathbb{R}\to [-1,1]$ such that $\Psi_t(s)
\xrightarrow{s\to\infty} 1$. Suppose furthermore that for $t\ge 1$ the distributional Fourier transform of $\Psi_t$ is supported in a $\frac{1}{t}$-neighbourhood of
$0$.
Choose an isometry $\alpha$ between
$L^2(\slashed{S}^+)$ and 
$L^2(\slashed{S}^-)$ induced from a measurable bundle isometry, set
$\Psi_t(D_X)^+:=\Psi_t(D_X)|_{L^2(\slashed{S}^+)}$ and $F_X(t):=\alpha^*\circ
\Psi_t(D_X)^+$. 
Set $e_{11}:=
\begin{pmatrix}
  1 &0 \\ 0 & 0
\end{pmatrix}$, $e_{22}:=
\begin{pmatrix}
  0 & 0\\ 0 & 1
\end{pmatrix}$. Note that the presence of $\alpha^*$ implies that $F_X(t)$ is an operator on $L^2(\slashed{S}^+)$.
\begin{definition}\label{def:fund_class}
  In the above situation, the (locally finite) fundamental class $[D_X] \in
  K_0(C^*_{L,d}(X))=K_0^L(X)$ is given explicitly by $[P_X] - [e_{11}]$, with
$$P_X:=\begin{pmatrix}
  FF^* + (1-FF^*)FF^* & F(1-F^*F)+ (1-FF^*)F(1-F^*F)\\
  (1-F^*F)F^* & (1-F^*F)^2
\end{pmatrix}.
$$
In this formula $F$ denotes $F_X(\cdot)$ and $P_X$ is an idempotent in
$M_2(C^*_{L,d}(X)^+)$. Here,  $A^+$ denotes the unitalisation of $A$. 
\end{definition}
\begin{remark}
 Note that since $\Psi_t$ is assumed to have compactly supported Fourier
 transform, $\Psi_t(D_X)$ has finite propagation which means that
 $P_X$ is a matrix over the unitalisation of $\mathbb{R}^*_L(X) \subset C^*_{L,\maxi}(X)$.
\end{remark}

Now let $M$ be a compact spin manifold with boundary $N$. Denote by $N_\infty$ the cylinder $N\times [0,\infty)$ and by $M_\infty$ the manifold
$M\cup_{N}N_\infty$. Given a Riemannian metric on $M$ which is collared at the
boundary, we will equip $N_{\infty}$ with the product metric. 
Taking the image of $[D_{M_\infty}]$ in $K^L_*(M_\infty,N_\infty)$ and
then under the excision isomorphism defines the relative fundamental class
$[M,N]\in K^L_*(M,N)$. For the index calculations which we have to carry out we need
an explicit representative of this class, and this in the model of relative
K-homology as the K-theory of the mapping cone algebra
$C_{\iota(M,N)}$. Therefore, we recall the construction of \cite{CWY},
referring for further details to \cite{CWY} ---see also
\cite{HR}*{Proposition 4.8.2} and \cite{HR}*{Proposition 4.8.3}.

As the relative K-homology groups are
constructed as mapping cones which come with a built-in shift of degree, we
have to use Bott periodicity to shift the fundamental class to the suspension
algebra (with degree shift). To implement this, denote by $v$ the Bott
generator of $K_1(C_0(\mathbb{R}))$.
Following \cite{CWY} define the invertible element
\begin{equation*}
\tau_D:= v \otimes P_{M_{\infty}} + I \otimes (I-P_{M_{\infty}})
\end{equation*}
in a matrix algebra over $C(S^1) \otimes C^*_{L,d}(M_\infty)^+$ with inverse given by $\tau_D^{-1} =
v^{-1} \otimes P_{M_\infty} + I \otimes (I-P_{M_\infty})$ (see \cite{HR}*{Proposition 4.8.3} for more details). Next, we map to
the relative K-homology of the pair $(M,N)$, which requires applying the
inverse of the excision isomorphism $K_*(M,N)\to
K_*(M_\infty,N_\infty)$. This is implemented for our K-theory cycles by
multiplication with a 
cut-off. For technical reasons, we observe that instead of $N\subset M$ we can
use the homeomorphic $N_R:=N\times \{R\}\subset M_R:= M\cup N\times [0,R]$ for
each $R\ge 0$. We use localisation algebras, and then we can use the K-theory
isomorphism $C^*_{L,d}(M_R)\to C^*_{L,d}(M\subset M_\infty)$ and work with
$C^*_{L,d}(M\subset M_\infty)$ which is independent of $R$. Similarly, we use the
K-theory isomorphism $C^*_{L,d}(N_R)\to C^*_{L,d}(N\subset N_\infty)$ and replace
$C^*_{L,d}(N_R)$ by the $R$-independent $C^*_{L,d}(N\subset N_\infty)$. This causes
slight differences to the construction of \cite{CWY}.

For the cut-off, set $\chi_R:=\chi_{M_R}$, the characteristic function of $M_R$.
Consider
\begin{equation*}
  \begin{split}
    \tau_{D,R}:=& v\otimes (\chi_R P_{M_\infty}\chi_R +
    (1-\chi_{R})\,e_{11}(1-\chi_{R})) \\
     + & I \otimes (I-
    (\chi_RP_{M_\infty}\chi_R + (1-\chi_{R})e_{11}(1-\chi_{R})) )
  \end{split}
\end{equation*}
and define $\tau_{D,R}^{-1}$ in the same way with $v$ replaced by
$v^{-1}$. Note that these two operators are in general \emph{not} inverse to
each other. Define, for $s \in [0,1]$,
{\small
  \begin{equation*}
w_{D,R}(s):=\begin{pmatrix}
 I & (1-s)\tau_{D,R}\\
 0 & I
\end{pmatrix}
\begin{pmatrix}
 I & 0\\
 -(1-s)(\tau_D)^{-1}_M & I
\end{pmatrix}
\begin{pmatrix}
 I & (1-s)\tau_{D,R}\\
 0 & I
\end{pmatrix}
\begin{pmatrix}
 0 & -I\\
 I & 0\\
\end{pmatrix}
.
\end{equation*}
}
Finally set
\begin{equation}\label{eq:def_of_q}
q_{D,R}(s) := w_{D,R}(s)\;e_{11}\, w_{D,R}(s)^{-1} .
\end{equation}

Applying the same procedure not to $\tau_D$ but to 
$v \otimes e_{11} + I \otimes  e_{22}$, we obtain a curve
$q_p(s)$. Note that by construction of $\tau_{D,R}$, all operators, in
particular $q_{D,R}(s)$, are diagonal for the decomposition
$L^2(M_\infty)=L^2(M_R)\oplus L^2(N\times [R,\infty))$ and are of standard
form on 
$L^2(N\times [R,\infty))$. This summand does not appear in \cite{CWY} but has
to be there to construct the appropriate operators in $C^*_{L,d}(M\subset
M_\infty)$.

\begin{lemma}\label{lem:form_of_ops}
  Assume that the operator $F_{M_\infty}(t)$ has propagation $\le L$ for some
  $L\in[0,\infty)$. Then $q_{D,R}(s)(t)$ (recall that we always 
  have an additional $t\in [1,\infty)$-dependency) has propagation $\le
  30L$. It is diagonal with respect to the decomposition
  $L^2(M_\infty)=L^2(M_R)\oplus L^2(N\times [R,\infty))$ and coincides with
  $q_p(s)$ on $L^2(N\times [R,\infty))$. It is obtained via finitely many
  algebraic operations (addition, composition) from $\Psi_t(D_{M_\infty})$, the measurable
  bundle isometry $\alpha$, the
  Bott element $v$ and $\chi_R$.

  If $R>30L$ then $q_{M,R}(0)(t)$ differs from $q_p(0)(t)$ by an operator $Q$
  supported on $N\times [0,R]$. More precisely, for suitable operators $A,B$,
  \begin{equation*}
    Q= \chi_R A \circ I\otimes [\chi_R,P_\infty]\circ B\chi_R
  \end{equation*}
  where the commutator $[\chi_R,P_\infty]$ is supported on $N\times
  [R-5L,R+5L]$ and $Q$ has propagation $\le 30L$.

  Like $q_{D,R}(s)(t)$, the operator $Q(t)$ is obtained via finitely many
  algebraic operations from $\Psi_t(D_{M_\infty})$, $\alpha$, $v$, $v^{-1}$, and $\chi_R$.

  Due to the local nature of all constructions and because of the support
  property of the commutator $[\Psi_R,P_\infty]$ (using Lemma \ref{relind} for
  $\Psi_t(D)$), the operator $Q$ on $L^2(N\times [0,R])$ is \emph{equal} to
  the operator constructed correspondingly,
   where $D_{M_\infty}$ is replaced by
  $D_{N\times\reals}$ and $\chi_R$ by $\chi_{N\times (-\infty,R]}$. 
\end{lemma}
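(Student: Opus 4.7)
The plan is to verify the assertions in order by tracking propagation explicitly, extracting the block decomposition induced by $\chi_R$, and finally isolating the commutator form of $Q$ through an algebraic identity.

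First, propagation: since $\widehat{\Psi_t}$ has support of length $2L$, the wave-propagation bound gives $\prop \Psi_t(D_{M_\infty})\le L$, hence $\prop F\le L$ with $F:=F_{M_\infty}(t)$. The formula in Definition~\ref{def:fund_class} exhibits $P_{M_\infty}$ as a noncommutative polynomial of degree $\le 5$ in $F,F^*$, so $\prop P_{M_\infty}\le 5L$; writing $P':=\chi_R P_{M_\infty}\chi_R+(1-\chi_R)e_{11}(1-\chi_R)$, the operator $\tau_{D,R}$ and its formal inverse $\tau_{D,R}^{-1}:=v^{-1}\otimes P'+I\otimes(I-P')$ inherit the same bound since multiplication by $\chi_R$ does not increase propagation. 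Counting operations in $w_{D,R}(s)$ and $q_{D,R}(s)=w_{D,R}(s)\,e_{11}\,w_{D,R}(s)^{-1}$ yields $\prop q_{D,R}(s)(t)\le 30L$, and every step is manifestly a finite algebraic combination of $\Psi_t(D_{M_\infty})$, $\alpha$, $v$, and $\chi_R$. The identities $\chi_R(1-\chi_R)=0$ and $\chi_R^2=\chi_R$ show that $P'$ is block-diagonal with respect to $L^2(M_\infty)=L^2(M_R)\oplus L^2(N\times[R,\infty))$ and equals $e_{11}$ on the second summand; hence $\tau_{D,R}$ restricts to $\tau_p:=v\otimes e_{11}+I\otimes e_{22}$ there, and the same algebraic expression then gives $q_{D,R}(s)(t)=q_p(s)$ on $L^2(N\times[R,\infty))$.

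For the difference, the key observation is that $\tau_p$ is a genuine unitary, so $w_p(0)=\mathrm{diag}(\tau_p,\tau_p^{-1})$ and $q_p(0)=e_{11}$. Multiplying out the four factors defining $w_{D,R}(0)$ gives
\begin{equation*}
w_{D,R}(0)=\begin{pmatrix} 2\tau_{D,R}-\tau_{D,R}\tau_{D,R}^{-1}\tau_{D,R} & \tau_{D,R}\tau_{D,R}^{-1}-I\\ I-\tau_{D,R}^{-1}\tau_{D,R} & \tau_{D,R}^{-1} \end{pmatrix},
\end{equation*}
whose off-diagonal entries both equal $\pm(v+v^{-1}-2I)\otimes E$ with $E:=P'-(P')^2=\chi_R P_{M_\infty}(1-\chi_R)P_{M_\infty}\chi_R$. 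The identities $\chi_R P_{M_\infty}(1-\chi_R)=[\chi_R,P_{M_\infty}](1-\chi_R)$ and $(1-\chi_R)P_{M_\infty}\chi_R=-(1-\chi_R)[\chi_R,P_{M_\infty}]$ yield $E=-[\chi_R,P_{M_\infty}](1-\chi_R)[\chi_R,P_{M_\infty}]$. Writing $Q=[w_{D,R}(0),e_{11}]\,w_{D,R}(0)^{-1}$ lets the commutator with $e_{11}$ extract the off-diagonal entries, exposing an explicit factor of $E$; absorbing one copy of $[\chi_R,P_{M_\infty}]$ together with $(1-\chi_R)$ and the remaining matrix factors into $A$ and the rest into $B$, and using $E=\chi_R E\chi_R$, produces the advertised form $Q=\chi_R A\circ (I\otimes[\chi_R,P_{M_\infty}])\circ B\chi_R$ with $A,B$ of propagation $\le 30L$.

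Finally, the kernel of $[\chi_R,P_{M_\infty}]$ at $(x,y)$ equals $(\chi_R(x)-\chi_R(y))P_{M_\infty}(x,y)$, which is non-zero only if $d(x,y)\le 5L$ and $x,y$ lie on opposite sides of $N\times\{R\}$; this places the support in $N\times[R-5L,R+5L]$, and together with the $\chi_R$ cut-offs and the $30L$ propagation of $A,B$ confines the support of $Q$ to $N\times[0,R]$ for $R>30L$. For the locality assertion, the support of $[\chi_R,P_{M_\infty}]$ lies entirely in the product-metric region of $M_\infty$, so by finite propagation of $P_{M_\infty}$ and the locality in Lemma~\ref{relind}, $\Psi_t(D_{M_\infty})$ acts on sections supported there exactly as $\Psi_t(D_{N\times\reals})$, whence the whole expression for $Q$ is unchanged upon replacing $D_{M_\infty}$ by $D_{N\times\reals}$ and $\chi_R$ by $\chi_{N\times(-\infty,R]}$. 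The main obstacle is the algebraic step in the second paragraph: naively expanding $q_{D,R}(0)-q_p(0)$ in $\tau_{D,R}-\tau_p$ produces the globally supported expression $\chi_R(P_{M_\infty}-e_{11})\chi_R$, which exhibits no localization near $N\times\{R\}$; only by writing the difference as a commutator of $w_{D,R}(0)$ with $e_{11}$ does one capture the non-unitarity defect of $\tau_{D,R}$, which then manifestly factors through $[\chi_R,P_{M_\infty}]$.
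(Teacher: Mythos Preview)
Your proof is correct and follows the same line as the paper's: both identify the discrepancy $q_{D,R}(0)-q_p(0)$ as arising precisely from the failure of $\chi_R P_{M_\infty}\chi_R$ to be idempotent (equivalently, of $\tau_{D,R}$ to be honestly invertible with the stated formal inverse), and both express this defect through the commutator $[\chi_R,P_{M_\infty}]$. Your argument is considerably more explicit---you actually compute $w_{D,R}(0)$, verify $q_p(0)=e_{11}$, and use the identity $Q=[w_{D,R}(0),e_{11}]w_{D,R}(0)^{-1}$ to isolate the off-diagonal defect $(v+v^{-1}-2I)\otimes E$---whereas the paper merely points to the formula in \cite{CWY}*{Claim 2.19} and says one must ``commute $P_{M_\infty}$ and $\chi_R$''.
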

\begin{proof}
  The explicit formulas show that $q_{D,M}(s)(t)$ is an algebraic combination
  of $\Psi_t(D_{M_\infty})$, $\alpha$, etc.~as claimed, where all building
  blocks either have propagation $0$ or are $\Psi_t(D_{M_\infty})$, and we
  compose at most $30$ of the latter. The claim about the propagation
  follows.

  As it can be seen from the formula in the proof of \cite{CWY}*{Claim 2.19},
  $q_{D,R}(0)$ would be equal to $q_p(0)$ if $\tau_{D,R}$ was
  invertible with inverse $\tau_{D,R}^{-1}$, which would happen if
  $\chi_RP_{M_\infty}\chi_R$ was an idempotent. To compare with this situation
  one has to commute $P_{M_\infty}$ and $\chi_R$ which produces the shape of
  $Q$ as claimed. The rest then follows as for
  $q_{D,R}(s)$. 
\end{proof}

Denote by $\iota_R^\prime$ the inclusion of $C^*_{L,d}(N\subset N_\infty)$ in
$C^*_{L,d}(M\subset M_\infty)$, the image consisting of those operators which act
only on $L^2(N_\infty)$.

The relative fundamental class $[M,N] \in K_0(C_{S\iota_R^\prime}) \cong
K_0(SC_{\iota^\prime}) \cong K^L_0(M,N)$ is defined as
\begin{equation}\label{eq:def_of_rel_fundclass}
[M,N]:=\left[(q_{D,R}(0),q_{D,R}(\cdot))\right] -
\left[(q_p(0),q_p(\cdot))\right].
\end{equation}
It is implicit in \cite{CWY} that the K-theory class is independent of $R$ and
the family of normalising functions $\Psi_t$.

\begin{definition}[The Relative Index] \label{defrelind}
 The relative index of the Dirac operator is defined as 
$$\mu([M,N]) \in K_0(C^*_{d}(\pi_1(M),\pi_1(N))).$$ 
The explicit K-theory cycle defining $[M,N]$ and the description of
 the map $\mu$ of Definition \ref{def:relative_index} gives us an explicit
 cycle for the relative index:

 We have to lift the operators
 $q_{D,M}(s)$ 
 involved in the construction of $[M,N]$ to equivariant operators on the
 $\pi_1(M)$-cover $\widetilde M_\infty$ and those involved in $q_{D,M}(0)$ to equivariant
 operators on the
 $\pi_1(N)$-cover $\tilde N_\infty$. This is 
 possible here and the operators are given as the corresponding functions of
 the Dirac operator on the coverings. For this, we use that by Lemma
 \ref{lem:form_of_ops} the operators $q_{D,M}(t)$ is
 obtained as an expression in functions of the Dirac operator which lift to
 the corresponding  functions of the Dirac operator by Lemma \ref{relind}.

 Similarly, by Lemma \ref{lem:form_of_ops} and if $R>30L$, where the
 propagation of $\Psi_t(D)$ is bounded by $L$ for all $t\in [1,\infty)$, the
 operator $q_{D,R}(0)$ is obtained as an algebraic combination of functions of
 $D_{N\times \reals}$ and the cut-off function $\chi_{N\times (-\infty,R]}$
   which lift by Lemma \ref{relind} to $\pi_1(N)$-equivariant operators on
   $\tilde N\times [0,\infty)$ defined by the same expressions. Thus if we denote by $\tilde q_{D,R}$ the element
   constructed as above using the Dirac operator of $\widetilde{M_\infty}$ and $\chi_{\widetilde{M_R}}$ and by
   $\tilde q^N_{D,R}$ the element constructed using the Dirac operator on $\widetilde{N}\times \reals$ and
   $\chi_{N\times (-\infty,R]}$ then we have the following
   \begin{lemma}
    The expression $[(\tilde q^N_{D,R}(0), \tilde q_{D,R}(\cdot))] -
     [(q_p(0),q_p(\cdot))]$ defines an element of $K_0(SC_{C^*_{L,d}(\tilde N\subset \tilde
       N_\infty)^{\pi_1(N)}\to C^*_{L,d}(\widetilde M\subset \widetilde M_\infty)^{\pi_1(M)}})$ which identifies under the canonical
       isomorphism of the latter group
       with $K_0(M,N)$ with $[M,N]$.
   \end{lemma}

   Hence under these conditions on $R$ and the propagation of $\Psi_t(D)$, the
   relative index is the obtained by evaluation at $t=1$, or by homotopy
   invariance at any $t\ge 1$:
   \begin{align}\label{eq:cycle_for_relind}
   \begin{split}
     &\mu([M,N]) = [(\tilde q^N_{D,R}(0)(t), \tilde q_{D,R}(\cdot)(t))] -
     [(q_p(0),q_p(\cdot))] \in \\
     & K_0(SC_{C^*(\tilde N\subset \tilde
       N_\infty)^{\pi_1(N)}\to C^*_{d}(\widetilde M\subset \widetilde M_\infty)^{\pi_1(M)}})
     \iso K_0(C^*_{d}(\pi_1(M),\pi_1(N))).
     \end{split}
   \end{align}
   As $q_p(\cdot)$ is independent of $t$, we omit specifying the evaluation at
   $t$ here. 
\end{definition}

\subsection{The Localised Fundamental Class and Coarse Index} \label{locind}
Suppose $X$ is a smooth even dimensional spin
manifold with free and proper action by $\Gamma$. Let $Z$ be a closed
$\Gamma$-invariant subset 
of $X$. Suppose that there exists a complete $\Gamma$-invariant Riemannian
metric on $X$ which has
uniformly positive scalar curvature outside $Z$. In \cite{RI} and in more
detail in \cite{RP}, Roe defines a localised coarse index of the Dirac
operator in $K_*(C^*_{\red}(Z \subset X)^\Gamma)$. In the course of the proof
of \cite{HankePapeSchick}*{Theorem 3.11}, the construction of the latter
localised index is generalised to the case of a Dirac operator twisted with a
Hilbert $C^*$-module bundle. In \cite{RZ}*{Chapter 2}, Zeidler defines this
index 
using localisation algebras. There, he also shows that under certain
assumptions on a manifold $X$ with boundary $Y$, the localised coarse index
can be used to define an obstruction to the extension of a uniformly positive
scalar curvature metric on the boundary to a uniformly positive scalar
curvature metric on the whole manifold. In this section we follow the
approach in \cite{RZ} to define the localised fundamental class and coarse
index.

\begin{definition}
 Denote by $C^*_{L,0,d}(X)^\Gamma$ the kernel of the evaluation
 homomorphism $\ev_1\colon C^*_{L,d}(X)^\Gamma \rightarrow C^*_{d}(X)^\Gamma$.
 Denote by $C^*_{L,Z,d}(X)^\Gamma$ the preimage of $C^*_{d}(Z \subset
 X)^\Gamma$ under $\ev_1$. The symbol $d$ here stands for the chosen
 completion ($\red$, $\max$, or $q$). 
\end{definition}

Suppose that $g$ is a $\Gamma$-invariant metric on $X$ with uniformly positive scalar curvature outside of a $\Gamma$-invariant set $Z$. In \cite{RZ}*{Definition 2.2.6}, in
this situation the so-called partial $\rho$-invariant
$\rho^\Gamma_{Z,\red}(g) \in C^*_{L,Z,\red}(X)^\Gamma$ is constructed, which
is a lift of $[D_X]$ 
under the morphism $K_*(C^*_{L,Z,\red}(X)^\Gamma) \rightarrow
K_*(C^*_{L,\red}(X)^\Gamma)$ induced by the inclusion.

Recall the explicit representative for $[D_X] \in K_0(C^*_{L,d}(X)^\Gamma)$
of Section \ref{relindd}. We next recall the
construction of \cite{RZ}*{Definition 2.2.6} and show that it also works for
$C^*_q$.

\begin{lemma}\label{relind}
  If $f_2\in C_b(\reals)$ has Fourier transform with
support in $[-r,r]$ then $f_2(D)$ is $r$-local and  depends only on the
\emph{$r$-local 
  geometry} in the following sense: if $A\subset X$ is a $\Gamma$-invariant
measurable subset then $\chi_Af_2(D)(1-\chi_{B_r(A)})=0$ and $\chi_Af_2(D)$
depends only on the Riemannian metric on $\overline{B_r(A)}$.
\end{lemma}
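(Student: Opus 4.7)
The plan is to deduce everything from Fourier inversion combined with the standard unit propagation speed of the wave group generated by $D$, applied simultaneously in all the quotients $X/N$ that enter the definition of $C^*_q(X)^\Gamma$. Concretely, I would write
\begin{equation*}
 f_2(D) = \frac{1}{2\pi}\int_{-r}^{r} \hat{f_2}(t)\, e^{itD}\, dt,
\end{equation*}
understood as a Bochner integral in the strong operator topology, which is legitimate because $e^{itD}$ is strongly continuous in $t$ and $\hat{f_2}$ is a (compactly supported) Schwartz distribution; since we have already set up in the previous subsection that $D$ descends to a self-adjoint operator on the defining representation of $C^*_q(X)^\Gamma$ via the quotients $X/N$, this integral is meaningful there and the statements to be proved reduce to statements about $e^{itD}$.

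The first step is $r$-locality. The standard unit propagation speed (see \cite{HR}*{Proposition 10.3.1}) says that for any $\Gamma$-invariant measurable $B\subset X$ and any $t\in\reals$, the operator $e^{itD}$ sends sections supported in $B$ to sections supported in $\overline{U_{|t|}(B)}$. Taking $B:=X\setminus U_r(A)$, one obtains $\chi_A e^{itD}(1-\chi_{U_r(A)})=0$ for every $|t|\le r$, since the propagated support remains disjoint from $A$. Integrating against $\hat{f_2}(t)$ over $[-r,r]$ yields $\chi_A f_2(D)(1-\chi_{U_r(A)})=0$, which is the asserted $r$-locality.

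For the local dependence on the metric, I would use the adjoint reformulation $\chi_A f_2(D)=(\overline{f_2}(D)\chi_A)^*$, where $\overline{f_2}$ still has Fourier transform supported in $[-r,r]$. For any $s$, the vector $\overline{f_2}(D)(\chi_A s)$ is the Bochner integral of $\hat{\overline{f_2}}(t) e^{itD}(\chi_A s)$ over $|t|\le r$. By the classical proof of unit propagation speed for symmetric hyperbolic systems (via an energy estimate applied to the difference of solutions for two metrics), the value of $e^{itD}(\chi_A s)$ on the $(r-|t|)$-neighborhood of $A$ depends only on $\chi_A s$ and the restriction of the Riemannian metric to $\overline{B_r(A)}$; hence so does the integral, and therefore $\chi_A f_2(D)$ depends only on the metric on $\overline{B_r(A)}$. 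The only nontrivial point, beyond transcribing well-known facts about the Dirac wave group, is that these statements make sense in the defining representation of $C^*_q(X)^\Gamma$, which I would handle as above by noting that they hold simultaneously on each summand $L^2(X/N)$ and the claim therefore passes to the direct sum; this is the step I expect to require the most care.
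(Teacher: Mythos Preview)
Your proposal is correct and follows essentially the same approach as the paper: write $f_2(D)$ via Fourier inversion as an integral of $\hat f_2(t)e^{itD}$ over $|t|\le r$, then invoke unit propagation speed of the wave group for the locality assertion and the well-known local dependence of $e^{itD}$ on the geometry for the second assertion. The paper's proof is a single sentence stating exactly this; you have simply spelled out more of the standard details, including the observation that in the $C^*_q$ setting everything is checked quotient by quotient.
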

\begin{proof}
    This is the usual unit propagation statement in the form
  that $f_2(D)$ is the integral of $\hat f_2(t) e^{itD}$ where $e^{itD}$ not
  only has propagation $|t|$ but also is well known to depend only on the
  $r$-local geometry. The latter fact is a consequence of \cite{HR}*{Corollary 10.3.4}. 
\end{proof}

\begin{lemma}[\cite{RP}*{Lemma 2.3}, \cite{HankePapeSchick}*{Proposition 3.15}] \label{relind2}
 Suppose as above that the scalar curvature of $g$ outside $Z$ is
 bounded from below by
 $4\epsilon^2$. If $f \in C_0(\mathbb{R})$ has support in $(-\epsilon,\epsilon)$, then $f(D)$ lies in
 $C^*_{d}(Z \subset X)^\Gamma$.
\end{lemma}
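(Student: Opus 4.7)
The plan is to show that $f(D)$ can be approximated in norm by operators of finite propagation supported in a bounded neighbourhood of $Z$, thereby placing it in $C^*_{(d)}(Z\subset X)^\Gamma$. The two inputs are the support condition on $f$ and the Schr\"odinger-Lichnerowicz identity $D^2=\nabla^*\nabla+\kappa/4$, which on the positive-scalar-curvature region $X\setminus Z$ yields $\langle D^2 s,s\rangle\ge \epsilon^2\|s\|^2$ for every smooth compactly supported section $s$ with $\supp s\subset X\setminus Z$. By density and norm-continuity of the functional calculus, I would first reduce to the case that $f$ has compact support in a strictly smaller interval $(-\epsilon',\epsilon')\subset(-\epsilon,\epsilon)$, and approximate such an $f$ in sup-norm by $f_n$ with $\supp\widehat{f_n}$ compact (e.g.\ by smoothly truncating $\widehat f$), so that each $f_n(D)$ has finite propagation and belongs to $C^*_{(d)}(X)^\Gamma$.

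The heart of the proof is a concentration estimate: for a smooth $\Gamma$-invariant cut-off $\phi_R\colon X\to[0,1]$ vanishing on the $R$-neighbourhood of $Z$ and equal to $1$ outside the $(R+1)$-neighbourhood, with $\|d\phi_R\|_\infty\le C/R$, I would show that $\|\phi_R f(D)\|\to 0$ as $R\to\infty$. For any section $s_0$ the section $\phi_R f(D)s_0$ has support in $X\setminus Z$, so Schr\"odinger-Lichnerowicz together with the Leibniz rule $[D,\phi_R]=c(d\phi_R)$ gives
\[
\epsilon\|\phi_R f(D)s_0\|\le\|D(\phi_R f(D)s_0)\|\le\|\phi_R(xf)(D)s_0\|+\|d\phi_R\|_\infty\|f(D)s_0\|.
\]
Since $xf$ is again supported in $(-\epsilon',\epsilon')$ with $\|xf\|_\infty\le\epsilon'\|f\|_\infty$, the estimate iterates; after $n$ applications, a telescoping argument combined with $\|(x^n f)(D)\|\le(\epsilon')^n\|f\|_\infty$ yields
\[
\|\phi_R f(D)\|\le\bigl((\epsilon'/\epsilon)^n+(n/\epsilon)\|d\phi_R\|_\infty\bigr)\|f\|_\infty.
\]
Choosing for example $n=\lfloor\sqrt{R}\rfloor$ forces both summands to tend to $0$, using $\epsilon'/\epsilon<1$ and $\sqrt{R}/R\to 0$.

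The adjoint estimate gives $\|f(D)\phi_R\|\to 0$, so $f(D)=\chi_{U_{R+1}(Z)}\,f(D)\,\chi_{U_{R+1}(Z)}+\varepsilon_R$ with $\|\varepsilon_R\|\to 0$. Combining with the finite-propagation approximation $f_n(D)\to f(D)$ of the first step, one obtains a norm-approximation of $f(D)$ by finite-propagation operators supported in bounded neighbourhoods of $Z$, placing $f(D)$ in $C^*_{(d)}(Z\subset X)^\Gamma$. The main obstacle is the inherent tension between the two approximations, since a non-zero function cannot simultaneously have compact support in $(-\epsilon,\epsilon)$ and a compactly supported Fourier transform (it would be entire with compact support); the iteration above is exactly the mechanism that balances these two constraints. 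The argument essentially follows \cite{RP} and \cite{HankePapeSchick} in the reduced setting, and transposes to the completion $(d)$ by virtue of the functional calculus for $D$ established in the previous section.
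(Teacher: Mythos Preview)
Your argument is essentially the content of the cited references, whereas the paper's own proof is a one-line appeal to them: it invokes \cite{HankePapeSchick}*{Proposition 3.15} on each quotient $X/N$ and then notes that the definition of the quotient completion $C^*_q$ immediately transports the conclusion. So you are not taking a different route so much as unpacking the citation; your closing remark about transposing to the completion $(d)$ is exactly the step the paper makes explicit, and it works because your constants depend only on $\epsilon$, $\epsilon'$, and $\|f\|_\infty$, hence are uniform over all $X/N$.

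There is one slip to fix. A smooth function that vanishes on $B_R(Z)$ and equals $1$ outside $B_{R+1}(Z)$ must have $\|d\phi_R\|_\infty$ of order $1$, not $C/R$: it changes by $1$ over a unit-width annulus. To get $\|d\phi_R\|_\infty\le C/R$ you need the transition region to have width proportional to $R$, for instance $\phi_R=0$ on $B_R(Z)$ and $\phi_R=1$ outside $B_{2R}(Z)$. With that correction the iteration goes through exactly as you wrote. In fact, since the sum $\sum_{k=0}^{n-1}(\epsilon'/\epsilon)^k$ is bounded independently of $n$, you may send $n\to\infty$ first and obtain $\|\phi_R f(D)\|\le C'\|d\phi_R\|_\infty\|f\|_\infty$, which makes the choice $n\sim\sqrt R$ unnecessary.
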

\begin{proof}
  By \cite{HankePapeSchick}*{Proposition 3.15} the statement holds for all
  quotients $X/N$ and their reduced Roe algebra, which implies by definition
  of the quotient completion that it holds for $C^*_q(X)^\Gamma$.
\end{proof}

Because of the geometric nature of the completion of the Roe algebra we use,
Lemmas \ref{relind} and \ref{relind2} allow to define the localised coarse
index using the
completion $C^*_q$ as follows. 
\begin{definition}\label{def:partialrho}
  Choose a sup-norm continuous family of normalising functions $\Psi_t$ for
  $t\geq 1$ 
  such that $\Psi_1^2-1$ has support in $(-\epsilon,\epsilon)$, the
  Fourier transform of $\Psi_t$ has compact support for each $t>1$ and
  the Fourier transform of $\Psi_t$ has support in
  $[-\frac{1}{t},\frac{1}{t}]$ for $t\geq 2$. Note that the support condition
  on $\Psi_1$ implies that its Fourier transform is \emph{not} compactly
  supported. For the existence note that we have to approximate the Fourier
  transform of $\Psi_1$ by compactly supported functions (with a
  singularity at $0$) such that the error is small in $L^1$-norm. This is
  possible, as can be seen from the discussion in the proof of \cite{HankePapeSchick}*{Lemma 3.6}. 

  Define $F_X(t)$ and $P_X$ as in Section \ref{relindd}.
  Observe, however, that by Lemma \ref{relind} $F_X(1)F_X(1)^*-1 \in
  C^*_{q}(Z\subset X)^\Gamma$. It follows that now the cycle
  $[P_X]-[e_{11}]$ defines a class
  $$\rho^\Gamma_Z(g) \in K_0(C^*_{L,Z,d}(X)^\Gamma)$$
  which is of course a lift of $[D_X]$.
\end{definition}

\begin{corollary}\label{corol:lift}
  The construction shows that if we have uniform positive scalar
  curvature not only on $X\setminus Z$ but on all of $X$ there is a further lift of
  $\rho^\Gamma_Z(g)$ to
  $\rho^\Gamma(g)\in K_0(C^*_{L,0,d}(X)^\Gamma)$, the usual rho-invariant.
\end{corollary}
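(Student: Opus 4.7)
The plan is to verify that the explicit cycle $[P_X] - [e_{11}]$ constructed in Definition~\ref{def:partialrho} already represents a class in $K_0(C^*_{L,0,(d)}(X)^\Gamma)$ as soon as the positive scalar curvature assumption holds on all of $X$, not merely outside $Z$. Since $C^*_{L,0,(d)}(X)^\Gamma$ is an ideal in $C^*_{L,Z,(d)}(X)^\Gamma$ (being the preimage of $0$ inside the preimage of $C^*_{(d)}(Z\subset X)^\Gamma$), the inclusion of these ideals will then produce a class mapping to $\rho^\Gamma_Z(g)$.

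First I would invoke the Schr\"odinger--Lichnerowicz identity $D^2 = \nabla^*\nabla + \mathrm{scal}/4$, applied simultaneously to the Dirac operator on every normal quotient $X/N$. Under the assumption that the scalar curvature is bounded below by $4\epsilon^2$ on all of $X$, this gives $D^2 \geq \epsilon^2$ in every defining representation, so the spectrum of $D$ in the representation underlying $C^*_{(d)}(X)^\Gamma$ is contained in $(-\infty,-\epsilon]\cup[\epsilon,\infty)$. By our choice of normalising function, $\Psi_1^2-1$ has support in $(-\epsilon,\epsilon)$, so functional calculus yields $\Psi_1(D)^2 = 1$ identically.

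Next, translating back to $F_X(1) = \alpha^*\circ \Psi_1(D_X)^+$ and using that $\alpha$ is a bundle unitary between $L^2(\$^+)$ and $L^2(\$^-)$, I would conclude $F_X(1)F_X(1)^* = 1$ and $F_X(1)^*F_X(1) = 1$. Plugging these into the explicit formula for $P_X$ gives $P_X(1) = e_{11}$ on the nose. Consequently $P_X - e_{11}$ takes values in $M_2(C^*_{L,0,(d)}(X)^\Gamma)$, the idempotent $P_X$ lies in $M_2((C^*_{L,0,(d)}(X)^\Gamma)^+)$, and the formal difference $[P_X]-[e_{11}]$ defines the desired class $\rho^\Gamma(g)\in K_0(C^*_{L,0,(d)}(X)^\Gamma)$. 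Because the same cycle was used to define $\rho^\Gamma_Z(g)$, the naturality of the construction under the inclusion $C^*_{L,0,(d)}(X)^\Gamma\hookrightarrow C^*_{L,Z,(d)}(X)^\Gamma$ immediately gives that $\rho^\Gamma(g)$ maps to $\rho^\Gamma_Z(g)$.

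There is no real obstacle: the only non-trivial ingredient is the invertibility of $D$ in the defining representation of $C^*_q(X)^\Gamma$, but this is precisely the Schr\"odinger--Lichnerowicz statement that was already established earlier in the paper. Everything else is routine functional calculus together with the explicit block matrix form of $P_X$.
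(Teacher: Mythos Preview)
Your proof is correct and is exactly the argument the paper has in mind: the corollary is stated without proof because it is immediate from the construction in Definition~\ref{def:partialrho}, and you have simply spelled out the details—namely that under global uniformly positive scalar curvature the spectral gap forces $\Psi_1(D)^2=1$, hence $F_X(1)$ is unitary, hence $P_X(1)=e_{11}$ on the nose, so the cycle already lives in the unitalisation of $C^*_{L,0,(d)}(X)^\Gamma$.
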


\begin{definition}\label{def:coarse_ind}
 Let $Z \subset X$ and $g$ be as above. Suppose furthermore that the action of $\Gamma$ on $Z$ is cocompact so that Lemma~\ref{relroei} holds for $Z$. The
 equivariant localised coarse index $\Ind_Z^\Gamma(g)$ of $g$ with respect to $Z$ is defined as the image of $\rho^\Gamma_Z(g)$ under the composition
 $$K_0(C^*_{L,Z,d}(X)^\Gamma) \rightarrow K_0(C_{d}^*(Z \subset X)^\Gamma) \cong K_0(C_{d}^*(Z)^\Gamma),$$
 where the first map is induced by evaluation at $1$.
\end{definition}
The long exact sequence in $K$-theory associated to the short exact sequence
$$ 0 \rightarrow C^*_{L,0}(X)^{\Gamma} \rightarrow C^*_{L,Z}(X)^{\Gamma} \rightarrow C^*(Z \subset X)^{\Gamma} \rightarrow 0,$$
along with Corollary \ref{corol:lift} imply that if
$g$ has uniformly positive scalar curvature on all of $X$, then
$\Ind_Z^\Gamma(g)$ vanishes.

\subsection{Application to the Case of a Compact Manifold with Boundary}

Suppose $M$ is compact even-dimensional spin manifold with boundary $N$. In
this case we cannot directly define an index for the Dirac operator on $M$
with value in $K_*(C_q^*(\pi_1(M)))$. However given a
metric $g$ with positive scalar curvature and product structure near the
boundary, we can use the above localised coarse index to define an index in
$K_0(C_q^*(\widetilde{M})^{\pi_1(M)}) \cong K_0(C_q^*(\pi_1(M)))$. Note
that this index \emph{does} in general depend on the chosen metric of positive
scalar 
curvature near the boundary. Let us review the construction of
the latter index. 

As in Section~\ref{relindd}, denote by $N_\infty$ the cylinder $N\times [0,\infty)$ and by $M_\infty$ the manifold $M\cup_{N}N_\infty$.
Denote by $[D_{M_{\infty}}]$ the fundamental class of the Dirac operator in $K_*(C^*_{L,q}(M))$ associated to some metric $g$ on $M_\infty$ (not necessarily collared on the cylindrical end) and by
$[\widetilde{D_{M_{\infty}}}]$ the fundamental class of the Dirac operator in $K_*(C^*_{L,q}(\widetilde{M})^{\pi_1(M)})$ on $\widetilde{M_\infty}$ associated
to the pullback of $g$, which we denote by $\tilde{g}$. As observed in Remark
\ref{rem:ext_of_prop_lift}, Proposition~\ref{eps} extends to $M_\infty$ and the
pointwise lifting procedure of operators with small propagation gives rise to
an isomorphism
$K^L_*(M_\infty) \cong K_*(C^*_L(\widetilde{M_\infty})^{\pi_1(M)})$ under which $[D_{M_{\infty}}]$ is mapped to $[\widetilde{D_{M_{\infty}}}]$.
If $g$ has positive scalar curvature on $N$, then its pullback has uniformly positive scalar
curvature on $N_\infty^{\prime} \subset \widetilde{M_\infty}$, i.e.~outside
the cocompact subset $\widetilde{M}$ of $\widetilde{M_\infty}$. This allows us to
the define the localised coarse index
$\Ind^{\pi_1(M)}(g):=\Ind_{\widetilde{M}}^{\pi_1(M)}(\tilde{g}) \in
K_0(C^*(\widetilde{M})^{\pi_1(M)}) \cong K_0(C^*(\pi_1(M)))$. The
latter index is an obstruction to $\tilde{g}$, and thus $g$, having positive
scalar curvature.

\section{Statement and Proof of the Main Theorem}
\label{sec:proof}

Finally we are in the position to state the main theorem of this
paper. Throughout this section we will assume all the manifolds and their
boundary to be path-connected.
\begin{theorem} \label{mainthm}
 Let $M$ be a compact spin manifold with boundary $N$. We have the commutative diagram
 $$\begin{tikzcd}
\to   K^L_*(N) \arrow{r} \arrow{d}{\mu_N} & K^L_*(M) \arrow{r} \arrow{d}{\mu_M} & K^L_*(M,N)
   \arrow{d}{\mu} \to  \\
 \to  K_*(C_q^*(\pi_1(N))) \arrow{r} & K_*(C_q^*(\pi_1(M))) \arrow{r}{j}
   & K_*(C^*_q(\pi_1(M),\pi_1(N))) \to
 \end{tikzcd}$$
 where the vertical maps are the index maps and relative index maps.
 
Assume that $M$ has a metric $g$ which is collared at the boundary and
 has positive scalar curvature there. Then
\begin{equation*} j( \Ind^{\pi_1(M)}(g)) = \mu([M,N]) 
\end{equation*}
under the canonical map $ j\colon K_*(C_q^*(\pi_1(M))) \rightarrow K_*(C_q^*(\pi_1(M),\pi_1(N)))$.
\end{theorem}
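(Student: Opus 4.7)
The approach is to identify explicit cycles for both sides in the common K-theory group $K_0(C^*_q(\pi_1(M),\pi_1(N)))\cong K_0(SC_{\iota'})$, where $\iota'\colon A\to B$ denotes the inclusion with $A=C^*_q(\tilde N\subset \tilde N_\infty)^{\pi_1(N)}$ and $B=C^*_q(\widetilde M\subset \widetilde M_\infty)^{\pi_1(M)}$, and to show the cycles agree via a continuous deformation in the parameter $t$ of the localisation algebra.

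Throughout I will use the chopping family $\Psi_t$ of Definition \ref{def:partialrho}. With this choice, $P_{\widetilde M_\infty}(1)-e_{11}$ lies in $B$ by Lemma \ref{relind2}, so $\Ind^{\pi_1(M)}_{\widetilde M}(\tilde g)=[P_{\widetilde M_\infty}(1)]-[e_{11}]\in K_0(B)$. After identifying source and target of $j$ with relative Roe-algebra K-theory via Theorem \ref{eg} and Theorem \ref{theo:abs_vers_rel} (in their $q$-analogs), the map $j$ becomes the natural connecting map $K_0(B)\to K_0(SC_{\iota'})$ coming from the mapping cone of $\iota'$. I will realise it explicitly by the Connes-Skandalis / Bott construction applied to $[P(1)]-[e_{11}]$, included into the mapping cone K-theory via the canonical inclusion $SB\hookrightarrow C_{\iota'}$: setting $\tau^{abs}:=v\otimes P(1)+I\otimes(I-P(1))$ and defining $q^{abs}(s)$ by the formula in \eqref{eq:def_of_q} but \emph{without} the cut-off $\chi_R$, the resulting cycle $q^{abs}(\cdot)-q_p(\cdot)$ takes values in $B$ throughout and represents $j(\Ind^{\pi_1(M)}_{\widetilde M}(\tilde g))$.

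Now I will compare this with \eqref{eq:cycle_for_relind}. For $t>1$ and $R>30\prop(\Psi_t(D))$, the cycle $[(\tilde q^N_{D,R}(0)(t),\tilde q_{D,R}(\cdot)(t))]-[(q_p(0),q_p(\cdot))]$ represents $\mu([M,N])$ and is independent of the pair $(t,R)$ in this range by sup-norm continuity of $\Psi_t$. To conclude, I let $t\searrow 1$ with $R(t)\to\infty$ chosen so that $P(t)-e_{11}$ always has support inside $M_{R(t)}$. A direct computation using $\chi_R^2=\chi_R$ and $[\chi_R,e_{11}]=0$ shows that $\chi_R P(1)\chi_R+(1-\chi_R)e_{11}(1-\chi_R)=P(1)$ whenever $P(1)-e_{11}$ is supported in $M_R$; hence $\tau_{D,R}(1)=\tau^{abs}$ and $q_{D,R}(s)(1)=q^{abs}(s)$, so the cut-off becomes trivial at $t=1$. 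The boundary lift $\tilde q^N_{D,R}(0)(1)\in A$ captures whatever cylinder component $q^{abs}(0)-q_p(0)$ may still have, and the pair recombines to give precisely the mapping-cone cycle for the Bott representative of $j([P(1)]-[e_{11}])$.

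The main obstacle will be justifying the passage to $t=1$ rigorously, since $\Psi_1(D)$ does not have finite propagation and hence Lemma \ref{lem:form_of_ops} and Proposition \ref{eps} do not apply at $t=1$. I will bypass this by working in the $C^*_q$ completion throughout, where the Dirac operator on $\widetilde M_\infty$ is automatically $\pi_1(M)$-equivariant by functoriality under coverings and functional calculus produces $P_{\widetilde M_\infty}(1)$ directly without lifting any finite-propagation kernel. Lemma \ref{relind2} substitutes for the finite-propagation locality arguments: it places the relevant differences $q^{abs}(s)-q_p(s)$ in the ideal $B$ for all $s$ throughout the deformation, so the family of cycles is continuous in $t$ all the way down to $t=1$. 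Both the homotopy invariance of $\mu([M,N])$ and the identification of the limit cycle with $j(\Ind^{\pi_1(M)}_{\widetilde M}(\tilde g))$ then fall out.
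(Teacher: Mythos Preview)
Your overall strategy coincides with the paper's: represent $j(\Ind^{\pi_1(M)}_{\widetilde M}(\tilde g))$ by the uncut Bott cycle $q^{abs}(\cdot)$ built from $P_{\widetilde M_\infty}(1)$, represent $\mu([M,N])$ by the cut cycle $\tilde q_{D,R}(\cdot)(t)$ for $t>1$, and compare the two by letting $t\to 1$ and $R\to\infty$. The difference is that the paper establishes the comparison via a norm $\epsilon$-argument, whereas you claim an exact identity, and that identity is false.

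The gap is your assertion that one can choose $R(t)$ so that $P(t)-e_{11}$ is supported in $M_{R(t)}$, and in particular that $P(1)-e_{11}$ is supported in some $M_R$. Neither holds. For $t>1$ the function $\Psi_t$ has compactly supported Fourier transform and is therefore real-analytic, so $\Psi_t^2-1$ cannot vanish on any interval; hence $1-F(t)^*F(t)$ is nonzero along the whole cylinder and $P(t)-e_{11}$ is not supported near $M$ at all. For $t=1$, Lemma~\ref{relind2} only gives $P(1)-e_{11}\in C^*_q(\widetilde M\subset\widetilde M_\infty)^{\pi_1(M)}$, i.e.\ it is a \emph{norm limit} of operators supported in $M_R$, not itself supported in any $M_R$. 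Your algebraic identity $\chi_R P(1)\chi_R+(1-\chi_R)e_{11}(1-\chi_R)=P(1)$ is correct under its hypothesis, but the hypothesis is never satisfied, so the cut-off is not literally trivial at $t=1$.

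The repair is exactly what the paper does: from $P(1)-e_{11}\in B$ one gets the \emph{norm convergence}
\[
\chi_R P(1)\chi_R+(1-\chi_R)e_{11}(1-\chi_R)\xrightarrow{R\to\infty}P(1),
\]
hence $\|\tilde q_{D,R}(\cdot)(1)-q^{abs}(\cdot)\|\to 0$. Combined with the uniform-in-$R$ convergence $\|\tilde q_{D,R}(\cdot)(t)-\tilde q_{D,R}(\cdot)(1)\|\to 0$ as $t\to 1$ (sup-norm continuity of $\Psi_t$), and the analogous estimate on $\tilde N\times\reals$ for the boundary piece (using that there $q^N(0)(1)=q_p(0)$ exactly), one obtains that the relative-index cycle is arbitrarily norm-close to the image of the absolute-index cycle, which suffices for equality in K-theory. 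Replace your exact-equality step by this $\epsilon$-argument and the proof goes through.
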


The above theorem has as a corollary the following vanishing theorem of Chang, Weinberger and Yu for the relative index constructed in the mapping cone
of the quotient completion of the group ring:
\begin{theorem}
 Let $M$ be a compact spin manifold with boundary $N$. Suppose that $M$ admits a metric of uniformly positive scalar curvature which is collared at the
 boundary. Then $\mu([M,N]) = 0$.
 \label{TCWY}
\end{theorem}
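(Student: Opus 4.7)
The plan is to derive Theorem~\ref{TCWY} as a direct consequence of Theorem~\ref{mainthm}: once the absolute index $\Ind^{\pi_1(M)}_{\widetilde{M}}(\tilde g)$ is shown to vanish in $K_0(C_q^*(\pi_1(M)))$, the relative index $\mu([M,N])$ is forced to vanish by the identification $\mu([M,N])=j(\Ind^{\pi_1(M)}_{\widetilde{M}}(\tilde g))$. So all the work is in showing that the hypothesis of uniformly positive scalar curvature on \emph{all} of $M$ (not only near $N$) strengthens the input of Theorem~\ref{mainthm} enough to make the absolute index disappear.

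First I extend $g$ from $M$ to a metric on $M_\infty=M\cup_N N\times[0,\infty)$ by using the product structure on the cylinder, which is possible precisely because $g$ is collared at $N$. Since $M$ is compact and carries a metric of positive scalar curvature, the scalar curvature is bounded below by some $\kappa>0$ on $M$; on the product cylinder $N\times[0,\infty)$ the scalar curvature equals the scalar curvature of $(N,g|_N)$, which is bounded below by the same type of positive constant because $N$ is compact and inherits positive scalar curvature from $g$. Hence the extended metric has \emph{uniformly} positive scalar curvature on all of $M_\infty$, and the pullback $\tilde g$ to $\widetilde{M_\infty}$ has uniformly positive scalar curvature on all of $\widetilde{M_\infty}$.

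Under this stronger positivity, Corollary~\ref{corol:lift} applies and provides a further lift of the partial $\rho$-invariant $\rho^{\pi_1(M)}_{\widetilde{M}}(\tilde g)$ to a class
\begin{equation*}
\rho^{\pi_1(M)}(\tilde g)\in K_0(C^*_{L,0,q}(\widetilde{M_\infty})^{\pi_1(M)}).
\end{equation*}
The six-term exact sequence associated to the short exact sequence
\begin{equation*}
0\to C^*_{L,0,q}(\widetilde{M_\infty})^{\pi_1(M)}\to C^*_{L,\widetilde{M},q}(\widetilde{M_\infty})^{\pi_1(M)}\xrightarrow{\ev_1} C^*_{q}(\widetilde{M}\subset\widetilde{M_\infty})^{\pi_1(M)}\to 0
\end{equation*}
then shows that the image of $\rho^{\pi_1(M)}_{\widetilde{M}}(\tilde g)$ under $(\ev_1)_*$ is zero. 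Composing with the isomorphism $K_0(C^*_q(\widetilde{M}\subset\widetilde{M_\infty})^{\pi_1(M)})\cong K_0(C^*_q(\widetilde{M})^{\pi_1(M)})\cong K_0(C^*_q(\pi_1(M)))$ of Theorem~\ref{theo:abs_vers_rel} and Theorem~\ref{eg}, this says precisely $\Ind^{\pi_1(M)}_{\widetilde{M}}(\tilde g)=0$.

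Finally I invoke Theorem~\ref{mainthm}, which yields
\begin{equation*}
\mu([M,N])=j(\Ind^{\pi_1(M)}_{\widetilde{M}}(\tilde g))=j(0)=0,
\end{equation*}
as desired. There is really no genuine obstacle here beyond having Theorem~\ref{mainthm} in hand; the only subtlety is the routine but essential verification that collaring together with compactness of $M$ and $N$ upgrades the fibrewise positivity of scalar curvature to a \emph{uniform} lower bound on all of $\widetilde{M_\infty}$, which is what the partial $\rho$-construction of Definition~\ref{def:partialrho} and its stronger variant in Corollary~\ref{corol:lift} require.
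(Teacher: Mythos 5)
Your proof is correct and follows precisely the route the paper intends: you derive the vanishing from Theorem~\ref{mainthm} together with the observation already made in Section~\ref{locind} that uniform positive scalar curvature on all of $X$ (here $\widetilde{M_\infty}$) forces $\Ind_Z^\Gamma(g)=0$ via Corollary~\ref{corol:lift} and the six-term sequence for $0\to C^*_{L,0}\to C^*_{L,Z}\to C^*(Z\subset X)\to 0$. The paper presents Theorem~\ref{TCWY} as an immediate corollary of exactly this reasoning, so there is nothing to add.
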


\begin{proof}[Proof of the Theorem~\ref{mainthm}]
Proposition \ref{prop:Lift_compatible_with_Y_to_X} implies the commutativity of the diagram. To see this, note that the discussion there relies only on the functoriality properties of the maximal completions which are also satisfied by the quotient completions. It remains to show that given a metric with positive scalar curvature at the boundary,
 $\Ind^{\pi_1(M)}( g)$ is mapped to $\mu([M,N])$ under the canonical map. Let us
 analyse the situation with the strategy of proof and the difficulties involved. 
 For the notation used we refer to Sections~\ref{relindd} and \ref{locind} on
 the relative index and the localised coarse index.

 Both index classes are defined using explicit expressions involving functions
 of the Dirac operator. For $\Ind^{\pi_1(M)}( g)$, we only use the manifold $\widetilde M$
 and $\pi_1(M)$-equivariant constructions, which, however, are necessarily
 non-local to make use of the invertibility of the Dirac operator on the
 boundary. For $\mu([M,N])$, on the other hand, one has to use a
 $\pi_1(M)$-equivariant operator on $\widetilde M$ and a further lift to a
 $\pi_1(N)$-equivariant operator on $\tilde N$, which is only possible if all
 the functions of the Dirac operator involved are sufficiently local. To show
 that the two classes are mapped to each other, we need to reconcile these two
 points.

 First, observe that in the construction of the relative fundamental class and
 relative index we use the explicit implementation of the Bott periodicity
 map. We apply this now to our representative of the local index: with our
 choice of $\Psi_1$, $P_{\widetilde M_\infty}(1)$ is an idempotent in $C_q^*(\tilde 
 M\subset \widetilde M_\infty)^{\pi_1(M)}$ representing
 $\Ind^{\pi_1(M)}(g) \in K_0(C_q^*(\widetilde
 M\subset \widetilde M_\infty)^{\pi_1(M)})\iso
 K_0(C^*(\pi_1(M)))$. Next,
 \begin{equation*}
   \tau := v\otimes P_{\widetilde M_\infty}(1) + I\otimes (I-P_{\widetilde M_\infty}(1))
 \end{equation*}
  is the invertible element in $C_0(\reals)\otimes C^*(\widetilde M\subset \tilde
  M_\infty)^{\pi_1(M)}$ representing the $K_1$-class corresponding to the
  localised index under the suspension isomorphism. Finally, if we define
  $q(s)$ as in Equation \eqref{eq:def_of_q} with $\tau_{D,R}$ replaced by
  $\tau$ then 
  \begin{equation*}
   a:= [q(0)(1),q(\cdot)(1)]-[q_p(0),q_p(\cdot)] \in K_0(SC_{\{0\}\to C^*(\tilde
      M\subset \widetilde M_\infty)^{\pi_1(M)}})
  \end{equation*}
  defines the class corresponding to $\Ind^{\pi_1(M)}( g)$
  under the Bott periodicity isomorphism, where we use that the cone of the
  inclusion of $\{0\}$ into $A$ is the suspension of $A$. Of course, here
  $q(0)(1)=q_p(0)$. 

  We now have to show that, under the canonical map to the suspension of the
  cone of $C^*(\tilde
    N\subset \tilde N_\infty)^{\pi_1(N)}\to C^*(\widetilde M\subset \tilde
    M_\infty)^{\pi_1(M)}$ induced by the inclusion $\{0\}\to C^*(\tilde
  N,\tilde N_\infty)^{\pi_1(N)}$, the class $a$ is mapped to the relative index
  $\mu[M,N]$. Recall from \eqref{eq:cycle_for_relind} that the latter is
  represented by any cycle
  of the form
  \begin{equation*}
    [\tilde q^N_{D,R_t}(0)(t),\tilde q_{D,R_t}(\cdot)(t)] - [(q_p(0),q_p(\cdot))]
  \end{equation*}
  for $t>1$, such that the support of $\widehat\Psi_t$ is contained in
  $[-L_t,L_t]$ for $L_t\in\reals$ and therefore $\Psi_t(D)$ has propagation
  $\le L_t$, where we must choose $R_t> 30 L_t$. The construction of $\tilde
  q_{D,R_t}(\cdot)(t)$ involves the same steps as the one of $q(\cdot)$, but
  we use $\Psi_t(D)$ instead of $\Psi_1(D)$ and moreover apply cut-off with
  $\chi_{R_t}$. Note that now $\tilde q^N_{D,R_t}(0)(t)-q_p(0)\ne 0$, but rather
  $\tilde q^N_{D,R_t}(0)(t)-q_p(0)\in C^*(\tilde N\subset \tilde
  N_\infty)^{\pi_1(N)}$, so that this is not a class in the suspension of
  $SC^*(\widetilde M\subset \widetilde M_\infty)^{\pi_1(M)}$ but in the mapping cone.

  We claim now that for each $\epsilon>0$ there is $(t_\epsilon,R_\epsilon)$
  such that
  \begin{equation}\label{eq:norm_conv}
    \norm{\tilde q^N_{D,R_\epsilon}(0)(t_\epsilon) - q_p(0)} + \norm{\tilde
      q_{D,R_\epsilon}(\cdot)(t_\epsilon) - q(\cdot)(1)} \le \epsilon.
  \end{equation}
  This implies by standard properties of the K-theory of Banach algebras the
  desired result (as $q(0)(1)=q_p(0)$),
  \begin{equation*}
    \mu([M,N]) = c(\Ind^{\pi_1(M)}( g)).
  \end{equation*}

  To prove \eqref{eq:norm_conv} we make use of Lemma \ref{lem:form_of_ops}
  which explicitly describes the operators involved. This implies
  \begin{equation}\label{eq:t_to_1_conv}
    \norm{\tilde q_{D,R}(\cdot)(t) - \tilde q_{D,R}(\cdot)(1)}
    \xrightarrow{t\to 1} 0
  \end{equation}
  uniformly in $R$, as the two expressions are obtained via algebraic
  operations involving $\Psi_t(D)$, and by the sup-norm continuity of
  $\Psi_t$, $\Psi_t(D)$ converges to $\Psi_1(D)$ in norm (and this again
  uniformly, independent of the complete Riemannian manifold for which $D$ is
  considered).

Next by the uniformly positive scalar curvature on $N_\infty$ we have
$P_{\widetilde M_\infty}(1)-e_{11} \in C^*(\widetilde M\subset \widetilde
M_\infty)^{\pi_1(M)}$. This implies (convergence in norm)
\begin{equation*}
  \chi_R (P_{\widetilde M_\infty}(1)-e_{11})\chi_R \xrightarrow{R\to\infty}
  P_{\widetilde M_\infty}(1)-e_{11}
\end{equation*}
or equivalently
\begin{equation}
  \label{eq:projector_conv}
  \chi_R P_{\widetilde M_\infty}(1)\chi_R + (1-\chi_R)\,e_{11}\,(1-\chi_R)
  \xrightarrow{R\to\infty} P_{\widetilde M_\infty}(1).
\end{equation}

Because of Lemma \ref{lem:form_of_ops}, \eqref{eq:projector_conv} implies that
\begin{equation}
\norm{\tilde q_{D,R}(\cdot)(1)-q(\cdot)(1)}\xrightarrow{R\to\infty}
0\label{eq:conv_no_cut}
\end{equation}

as these operators are obtained as a fixed algebraic expression of either
\begin{equation*}
\chi_R
P_{\widetilde M_\infty}(1)\chi_R + (1-\chi_R)\,e_{11}\,(1-\chi_R)\quad\text{
  or }
P_{\widetilde M_\infty}(1).
\end{equation*}

Next, \eqref{eq:t_to_1_conv} together with \eqref{eq:conv_no_cut} imply the
assertion of \eqref{eq:norm_conv} for the second summand. Here, we can and
have to choose $R_\epsilon$ depending on $t_\epsilon$ such that
$R_\epsilon>R_{t_\epsilon}$ (depending on the propagation of
$\Psi_{t_\epsilon}(D)$).

Then, the lift $\tilde q^N_{D,R_\epsilon}(0)(t_\epsilon)$ to $C^*(\tilde
  N\subset \tilde N_\infty)^{\pi_1(N)}$ actually exists, is defined in terms
  of the Dirac operator on $\tilde N\times \reals$, and we have to show that
  by choosing $t_\epsilon$ sufficiently close to $1$ it is close to $q_p(0)$.

  This, as we already showed, it is a special case of \eqref{eq:t_to_1_conv}
  and \eqref{eq:conv_no_cut}, now
  applied to the Dirac operator on $\tilde N\times \reals$. Note that because
  of the
  invertibility of the Dirac operator on $N\times \reals$ and our appropriate
  choice of the normalising function $\Psi_1$, we have on the nose
  \begin{equation*}
    \tilde q^N(0)(1) = q_p(0),
  \end{equation*}
  where $q^N$ is defined like $q$ but using the Dirac operator on $\tilde
  N\times \reals$.
This finishes the proof of \eqref{eq:norm_conv} and therefore of our main
Theorem \ref{mainthm}.
\end{proof}

\begin{remark}\label{rem:odd_dimensions}
  We decided to present the details of the index constructions and proofs only
  for even dimensional manifolds.

  The case of odd dimensional manifolds can easily be reduced to this case via
  a ``suspension construction'', as also done in \cite{CWY}. More precisely,
  if we have an odd dimensional compact manifold $M$, we pass to the even
  dimensional manifold $M\times S^1$. Correspondingly, the covering space
  $\tilde M$ with action by $\pi_1(M)$ is replaced by $\tilde M\times \reals$
  with action of $\pi_1(M)\times\integers$.

  It is now a standard result that we have K\"unneth isomorphisms for the
 K-theory groups relevant to us, in particular for a group homomorphism
 $\Lambda\to \Gamma$
  \begin{equation}\label{eq:Kuen1}
    K_0(C^*_{d}(\Gamma\times\integers,\Lambda\times\integers))\xrightarrow{\iso}
    K_0(C^*_{d}(\Gamma,\Lambda))\oplus K_1(C^*_{d}(\Gamma,\Lambda)).
  \end{equation}
The ad hoc definition of the relative index $\mu(M,N)\in
K_1(C^*_{d}(\pi_1(M),\pi_1(N))$, generalizing Definition \ref{defrelind} to odd
dimensional $M$, is now just the image of $\mu([M\times S^1,N\times S^1])$
under the K\"unneth map \eqref{eq:Kuen1} (and indeed, the $K_0$-component is
zero).

Because positive scalar curvature of $M$ implies positive scalar curvature of
$M\times S^1$, Theorem \ref{TCWY} for odd dimensiona $M$ follows from its
version for the even dimensional $M\times S^1$.

In the same way, using K\"unneth and suspension isomorphisms for the whole
diagram of Theorem \ref{mainthm} (using along the way e.g.~\cite[Section
5]{RZA}), the statement and proof of Theorem \ref{mainthm} for odd dimensional
$M$ follows from the corresponding one for the even dimensional $M\times S^1$.

More systematically, Zeidler \cite{RZA} develops a setup of 
$Cl_n$-linear Roe algebras and localisation algebras and $Cl_n$-equivariant
Dirac operators on $n$-dimensional spin manifolds. Our constructions and
arguments should carry through in this setup, given a uniform treatment for
all dimensions, and working with real group $C^*$-algebras. As this requires a
bit more notation and additional concepts, and as we were striving for a down
to earth exposition, we decided to stick to the
classical setup and leave it to 
the interested reader to work out the details of such an approach.
\end{remark}

\begin{bibdiv}
  \begin{biblist}
    \bib{AR}{book}{
   author={Arveson, William},
   title={An invitation to $C\sp*$-algebras},
   note={Graduate Texts in Mathematics, No. 39},
   publisher={Springer-Verlag, New York-Heidelberg},
   date={1976},
   pages={x+106},
 }
 \bib{CWY}{article}{
     author={Chang, Stanley},
   author={Weinberger, Shmuel},
   author={Yu, Guoliang},
   title={Positive scalar curvature and a new index theory for noncompact
   manifolds},
   journal={J. Geom. Phys.},
   volume={149},
   date={2020},
   pages={103575, 22},
   issn={0393-0440},
   doi={10.1016/j.geomphys.2019.103575},
     note={\href{arxiv:1506.03859}{http://arxiv.org/abs/1506.03859}},
   }

 \bib{DG}{article}{
   author={Deeley, Robin J.},
   author={Goffeng, Magnus},
   title={Relative geometric assembly and mapping cones, part I: the
   geometric model and applications},
   journal={J. Topol.},
   volume={11},
   date={2018},
   number={4},
   pages={967--1001},
   issn={1753-8416},
   doi={10.1112/topo.12078},
}

\bib{DIX}{book}{
   author={Dixmier, Jacques},
   title={$C\sp*$-algebras},
   note={Translated from the French by Francis Jellett;
   North-Holland Mathematical Library, Vol. 15},
   publisher={North-Holland Publishing Co., Amsterdam-New York-Oxford},
   date={1977},
   pages={xiii+492},
   isbn={0-7204-0762-1},
}
\bib{EM}{article}{
   author={Ewert, Eske Ellen},
   author={Meyer, Ralf},
   title={Coarse geometry and topological phases},
   journal={Comm. Math. Phys.},
   volume={366},
   date={2019},
   number={3},
   pages={1069--1098},
   issn={0010-3616},
   doi={10.1007/s00220-019-03303-z},
 }
 \bib{FD}{book}{
   author={Fell, J. M. G.},
   author={Doran, R. S.},
   title={Representations of $^*$-algebras, locally compact groups, and
   Banach $^*$-algebraic bundles. Vol. 1},
   series={Pure and Applied Mathematics},
   volume={125},
   note={Basic representation theory of groups and algebras},
   publisher={Academic Press, Inc., Boston, MA},
   date={1988},
   pages={xviii+746},
   isbn={0-12-252721-6},
}
	
\bib{GongWangYu}{article}{
   author={Gong, Guihua},
   author={Wang, Qin},
   author={Yu, Guoliang},
   title={Geometrization of the strong Novikov conjecture for residually
   finite groups},
   journal={J. Reine Angew. Math.},
   volume={621},
   date={2008},
   pages={159--189},
   issn={0075-4102},
 }
\bib{GuoXieYu}{unpublished}{
      author={Guo, Hao},
      author={Xie, Zhizhang},
      author={Yu, Guoliang},
       title={A {L}ichnerowicz vanishing theorem for the maximal {R}oe
  algebra},
        date={2019},
        note={\href{arXiv:1905.12299}{http://arxiv.org/abs/1905.12299}},
}
 \bib{HankePapeSchick}{article}{
   author={Hanke, Bernhard},
   author={Pape, Daniel},
   author={Schick, Thomas},
   title={Codimension two index obstructions to positive scalar curvature},
   language={English, with English and French summaries},
   journal={Ann. Inst. Fourier (Grenoble)},
   volume={65},
   date={2015},
   number={6},
   pages={2681--2710},
   issn={0373-0956},
}
\bib{Hewitt1964}{article}{
   author={Hewitt, Edwin},
   title={The ranges of certain convolution operators},
   journal={Math. Scand.},
   volume={15},
   date={1964},
   pages={147--155},
   issn={0025-5521},
   doi={10.7146/math.scand.a-10738},
}

\bib{HR}{book}{
   author={Higson, Nigel},
   author={Roe, John},
   title={Analytic $K$-homology},
   series={Oxford Mathematical Monographs},
   note={Oxford Science Publications},
   publisher={Oxford University Press, Oxford},
   date={2000},
   pages={xviii+405},
   isbn={0-19-851176-0},
}
\bib{HRY}{article}{
   author={Higson, Nigel},
   author={Roe, John},
   author={Yu, Guoliang},
   title={A coarse Mayer-Vietoris principle},
   journal={Math. Proc. Cambridge Philos. Soc.},
   volume={114},
   date={1993},
   number={1},
   pages={85--97},
   issn={0305-0041},
   doi={10.1017/S0305004100071425},
 }
 \bib{Kubota}{unpublished}{
      author={Kubota, Yosuke},
       title={The relative {M}ishchenko--{F}omenko higher index and almost flat
  bundles},
        date={2018},
        note={\href{arXiv:1807.03181}{http://www.arXiv.org/abs/1807.03181}},
}
\bib{LeichtnamPiazza}{article}{
   author={Leichtnam, Eric},
   author={Piazza, Paolo},
   title={Dirac index classes and the noncommutative spectral flow},
   journal={J. Funct. Anal.},
   volume={200},
   date={2003},
   number={2},
   pages={348--400},
   issn={0022-1236},
   doi={10.1016/S0022-1236(02)00044-7},
 }
 \bib{MUR}{book}{
   author={Murphy, Gerard J.},
   title={$C^*$-algebras and operator theory},
   publisher={Academic Press, Inc., Boston, MA},
   date={1990},
   pages={x+286},
   isbn={0-12-511360-9},
}
\bib{OY}{article}{
   author={Oyono-Oyono, Herv\'{e}},
   author={Yu, Guoliang},
   title={$K$-theory for the maximal Roe algebra of certain expanders},
   journal={J. Funct. Anal.},
   volume={257},
   date={2009},
   number={10},
   pages={3239--3292},
   issn={0022-1236},
   doi={10.1016/j.jfa.2009.04.017},
}
\bib{PSc}{article}{
   author={Piazza, Paolo},
   author={Schick, Thomas},
   title={Rho-classes, index theory and Stolz' positive scalar curvature
   sequence},
   journal={J. Topol.},
   volume={7},
   date={2014},
   number={4},
   pages={965--1004},
   issn={1753-8416},
   doi={10.1112/jtopol/jtt048},
 }
 \bib{PUT}{article}{
   author={Putnam, Ian F.},
   title={An excision theorem for the $K$-theory of $C^*$-algebras},
   journal={J. Operator Theory},
   volume={38},
   date={1997},
   number={1},
   pages={151--171},
   issn={0379-4024},
}
\bib{QR}{article}{
   author={Qiao, Yu},
   author={Roe, John},
   title={On the localization algebra of Guoliang Yu},
   journal={Forum Math.},
   volume={22},
   date={2010},
   number={4},
   pages={657--665},
   issn={0933-7741},
   doi={10.1515/FORUM.2010.036},
 }
 \bib{RI}{book}{
   author={Roe, John},
   title={Index theory, coarse geometry, and topology of manifolds},
   series={CBMS Regional Conference Series in Mathematics},
   volume={90},
   publisher={Published for the Conference Board of the Mathematical
   Sciences, Washington, DC; by the American Mathematical Society,
   Providence, RI},
   date={1996},
   pages={x+100},
   isbn={0-8218-0413-8},
   doi={10.1090/cbms/090},
}
\bib{RP}{article}{
   author={Roe, John},
   title={Positive curvature, partial vanishing theorems and coarse indices},
   journal={Proc. Edinb. Math. Soc. (2)},
   volume={59},
   date={2016},
   number={1},
   pages={223--233},
   issn={0013-0915},
   doi={10.1017/S0013091514000236},
 }
 \bib{RLL}{book}{
   author={R\o rdam, M.},
   author={Larsen, F.},
   author={Laustsen, N.},
   title={An introduction to $K$-theory for $C^*$-algebras},
   series={London Mathematical Society Student Texts},
   volume={49},
   publisher={Cambridge University Press, Cambridge},
   date={2000},
   pages={xii+242},
   isbn={0-521-78334-8},
   isbn={0-521-78944-3},
 }
 
\bib{Seyedhosseini}{thesis}{
      author={Seyedhosseini, Mehran},
       title={About an index theorem of {C}hang-{W}einberger-{Y}u},
        type={Master thesis, Universit\"at G\"ottingen},
        date={2017},
}

\bib{PS}{thesis}{
      author={Siegel, Paul},
       title={Homological {C}alculations with the {A}nalytic {S}tructure
  {G}roup},
        type={Ph.D. Thesis},
        date={2012},
}
\bib{Yu}{article}{
   author={Yu, Guoliang},
   title={Localization algebras and the coarse Baum-Connes conjecture},
   journal={$K$-Theory},
   volume={11},
   date={1997},
   number={4},
   pages={307--318},
   issn={0920-3036},
   doi={10.1023/a:1007766031161},
}
		
\bib{RZ}{thesis}{
      author={Zeidler, Rudolf},
       title={Secondary {L}arge-scale {I}ndex {T}heory and {P}ositive {S}calar
  {C}urvature},
type={Ph.D. Thesis},
  note={\href{http://ediss.uni-goettingen.de/handle/11858/00-1735-0000-0028-8826-7}{http://ediss.uni-goettingen.de/handle/11858/00-1735-0000-0028-8826-7}},
        date={2016},
}

\bib{RZA}{article}{
   author={Zeidler, Rudolf},
   title={Positive scalar curvature and product formulas for secondary index
   invariants},
   journal={J. Topol.},
   volume={9},
   date={2016},
   number={3},
   pages={687--724},
   issn={1753-8416},
}
	
  \end{biblist}
\end{bibdiv}
\if{0=1}
\bibliography{references}
\fi

\end{document}